\def\spine{1.1in}
\def\epsilon{\varepsilon}
\def\restr{\scalebox{1.2}[1.3]{$\llcorner$}}
\theoremstyle{plain} 
\newtheorem{theorem}{Theorem}[section]
\newtheorem{lemma}[theorem]{Lemma}
\newtheorem{proposition}[theorem]{Proposition} 
\newtheorem{corollary}[theorem]{Corollary}
\newtheorem{definition}[theorem]{Definition}
\newtheorem{example}[theorem]{Example}
\numberwithin{equation}{section}
\title{Positive definiteness and the Stolarsky invariance principle}
\author{Dmitriy Bilyk}
\address{School of Mathematics, University of Minnesota, Minneapolis, MN 55408, USA.}
\email{dbilyk@math.umn.edu}
\author{Ryan W. Matzke} 
\address{School of Mathematics, University of Minnesota, Minneapolis, MN 55408, USA.}
\email{matzk053@umn.edu}
\author{Oleksandr Vlasiuk}
\address{Department of Mathematics, Florida State University, Tallahassee, FL 32306, USA.}
\email{ovlasiuk@fsu.edu}
\begin{document}

\maketitle

\begin{abstract}{In this  paper we elaborate on the  interplay between energy optimization, positive definiteness, and discrepancy.  In particular, assuming the existence of a $K$-invariant measure $\mu$ with full support, we show that conditional positive definiteness of a kernel $K$ is equivalent to a long list of other properties: including, among others, convexity of the energy functional, inequalities for mixed energies, and the fact that $\mu$ minimizes the energy integral in various senses.  In addition, we prove a very general form of the Stolarsky Invariance Principle on compact spaces, which connects energy minimization and discrepancy    and extends several previously known versions.}\end{abstract}

\tableofcontents

\section{Introduction}

\subsection{Energy Minimization}

Let $\Omega$ be  a compact metric  space and let  the kernel  $K: \Omega \times \Omega \rightarrow \mathbb{R}$ be   continuous and symmetric, i.e. $K(x,y) = K(y,x)$  for all $x,y \in \Omega$. 
We denote by $\mathcal{M}(\Omega)$ the set of finite regular signed Borel measures on $\Omega$, and by $\mathbb{P}(\Omega)  $ the set of Borel probability measures on $\Omega$.  Given $\mu, \nu \in \mathcal{M}(\Omega)$, we define their mixed $K$-energy as
\begin{equation}\label{eq:2ContMutEnergy}
I_K(\mu, \nu) = \int\limits_{\Omega} \int\limits_{\Omega} K(x,y) \, d\mu (x) d\nu (y),
\end{equation}
and the  $K$-energy (also referred to as energy integral or energy functional) of $\mu$ to be
\begin{equation}\label{eq:2ContEnergy}
I_K(\mu) := I_K(\mu, \mu) =  \int\limits_{\Omega} \int\limits_{\Omega} K(x,y) \, d\mu (x) d\mu (y).
\end{equation}
We are interested in finding the optimal (maximal or minimal, depending on $K$) values of $I_K(\mu)$ over all $\mu \in \mathbb{P}(\Omega)$, as well as extremal measures for which these values are achieved, i.e.\ equilibrium measures with respect to $K$.

For a measure $\mu \in \mathbb{P}(\Omega)$,   the \textit{potential} $U_K^{\mu}$ of $\mu$ with respect to $K$, defined as
\begin{equation}\label{eq:2PotDef}
U_K^{\mu}(x) : = \int\limits_{\Omega} K(x, y )\, d \mu(y),\quad x\in \Omega,
\end{equation}
plays an important role in the study of energy minimization, see, e.g., Theorem \ref{thm:Constant on Supp}. 

It is well known that energy minimization is closely connected  to the positive definiteness of the  kernel. In Section \ref{sec:PosDefKer} we further explore this property  and  its  variants, such as conditional positive definiteness and  positive definiteness up to an additive constant. We discuss various relations between these properties and energy minimization, as well as inequalities for mixed energies, convexity of the energy functional, Hilbert--Schmidt operators, potential theory, etc. A synopsis of the main results presented in  Section \ref{sec:PosDefKer} may be found right before the beginning of \S\ref{sec:CPDPDC1}.  This section contains mostly background material (although several results do seem to be new) and is completely self-contained. 


In Section \ref{sec:InvMeasEnergy}, we  restrict our attention to the case when there exists a reasonable candidate  for  an energy minimizer - namely, a {\it{$K$-invariant measure}} (i.e., a measure whose potential $U_K^{\mu}$  with respect to $K$   is constant  on $\Omega$). This is a very natural class of measures which includes, for example, the uniform (Lebesgue) surface measure on the sphere $\mathbb S^{d-1}$ when the kernel $K$ is rotationally invariant. 
Assuming the existence of an invariant measure,  much more can be said about the topics of Section \ref{sec:PosDefKer}. In particular, conditional positive definiteness of $K$ is equivalent to the fact that $\mu$ minimizes $I_K$ over all {\it{signed measures of mass one}}  (Theorem \ref{thm:CPDequivMIN}),   conditional positive definiteness and  positive definiteness up to an additive constant are equivalent, which is not true in general (Lemma \ref{lem:CPDtoPDC}), and local minimizers are necessarily global (see \S\ref{sec:LocGlob}).  Many of the proofs in this section rely on a simple yet crucial identity \eqref{eq:Linear1} of Lemma \ref{lem:NuMinusMu}.

Finally, in Section \ref{sec:Energy Basics}, we further focus on the situation when there exists a $K$-invariant measure {\it{with full support}}.  These assumptions really tie the discussion together: Theorem \ref{thm:CPDEquivalences} states that in this case, conditional positive definiteness is equivalent to nine other natural properties, such as various versions of (local or global) convexity, mixed energy bounds, and  minimization of $I_K$ by $\mu$. Similar statements are provided for positive definiteness and  conditional strict  positive definiteness (Theorems \ref{thm:PDEquivalences} and \ref{thm:CSPDEquivalences}).

Admittedly, some portion of the results discussed above are well known in the field, see, for example, an  excellent discussion in \cite{BHS}. However, we undertook an extensive study of the literature, and it appears that many of the implications are in fact  new (we carefully point those out in the text), while some others are scattered in the literature. This paper presents a unified, comprehensive, and self-contained discussion of  both known and new connections, which results in an impressively long list of equivalent characterizations of conditional positive definiteness provided in Theorem \ref{thm:CPDEquivalences}.

Section \ref{sec:MetricStol} introduces an application of these results to discrepancy theory (see \S\ref{sec:EnStol} below), while Section \ref{sec:Sphere} specializes the prior discussion to the sphere, i.e. the case when $\Omega= \mathbb S^{d-1}$ and $K(x,y) = F (\langle x,y \rangle )$ is a rotationally invariant kernel. In this setting, our results recover and generalize various well known results.

\subsection{Energy and Discrepancy: the Stolarsky Invariance Principle.}\label{sec:EnStol}

Discrepancy is a classical way to assess the quality of a finite point distribution $\omega_N = \{z_1,\dots, z_N \} \subset \Omega$ by comparing its empirical measure to some chosen (usually uniform)  measure $\mu$   on some test sets.  Vast literature exists on discrepancy theory \cites{BC, Ma, KN}.

Discrepancy is closely connected to energy minimization. One of the first and most famous examples of this connection is given by the {\it{Stolarsky Invariance Principle}} \cite{St}, see \eqref{eq:StolOrig}, which connects the classical $L^2$ discrepancy with respect to spherical caps and the sum of Euclidean  distances, i.e.\ the discrete energy with the kernel $K(x,y) = \| x - y \|$, showing that maximizing the latter is equivalent to minimizing the former.  There has been increased interest in this principle in the recent years, including several new proofs and extensions to various settings and kernels, see, e.g., \cite{BrD,BDM,Sk1,Ba}. A more detailed discussion can be found in the beginning of Section \ref{sec:MetricStol}.

In Theorem \ref{thm:genStol}, we prove a very general  form of the Stolarsky Invariance Principle  on an arbitrary compact space $\Omega$ which connects the  (continuous or discrete) energy with a positive definite kernel $K$ to a notion of $L^2$ discrepancy based on the {\it{convolution square root}} of $K$, whose existence is equivalent to positive definiteness of $K$ (Proposition \ref{prop:Convolution}).   This generalizes several prior versions of this principle.

\subsection{Notation and conventions.} We shall always assume that $\Omega$ is
a compact metric space, although our discussion up to and including Theorem~\ref{thm:Constant on Supp} applies also to general topological measure spaces. As defined earlier, the class of
signed finite regular Borel measures on $\Omega$ will be denoted by $\mathcal M
(\Omega)$ and, in addition to the class $\mathbb P (\Omega)$ of Borel
probability measures, we shall consider the following subclasses of $\mathcal M
(\Omega)$:   the class $\widetilde{\mathbb P } (\Omega)$ of all {\it{signed}}
measures of total mass one and $\mathcal Z (\Omega)$ -- the class of  all
signed measures $\nu \in \mathcal M (\Omega)$ with mean zero, i.e. $\int_\Omega
d\nu =  \nu (\Omega) = 0$.  We shall say that two measures are equal if they
coincide on all Borel subsets of $\Omega$; { likewise, inequalities between
measures will be understood to hold on the Borel subsets of $\Omega  $.}

While a substantial portion of the theory applies to more general kernels, in the present text we restrict our attention  just to  continuous functions. Therefore, we shall say that $K$ is a {\it{kernel}} on $\Omega \times \Omega$ if $K:\, \Omega \times \Omega \rightarrow \mathbb R$ is  continuous and symmetric. This ensures that the energy $I_K (\mu)$ in \eqref{eq:2ContEnergy} is well defined for any measure $\mu \in \mathcal M (\Omega)$.

While we  will be interested in local and  global minimizers over different sets of measures,  whenever we say a measure $\mu$ is a {\it{minimizer}}  of $I_K$ without any additional information, we mean that $\mu$ is a global minimizer of $I_K$ over $\mathbb{P}(\Omega)$, i.e.\ for all $\nu \in \mathbb{P}(\Omega)$, $I_K(\mu) \leq I_K(\nu)$. {Unless explicitly stated otherwise,  local minimizers will be understood in the directional sense, see Definition \ref{def:locmin}. 
The shorthand $\mu\restr_{B}$ will denote the restriction of $\mu$ to a set $B$, i.e. a measure defined by $\mu\restr_{B}  (A) = \mu (A\cap B) $. 

The Euclidean sphere in $\mathbb R^d$ will be denoted by $\mathbb S^{d-1}$ and $\sigma$ will denote the uniform (Lebesgue) measure on $\mathbb S^{d-1}$ normalized so that $\sigma (\mathbb S^{d-1})=1$.  


\section{Positive Definite Kernels and Energy Minimization}\label{sec:PosDefKer}

Positive definite kernels play an extremely important role in various areas of
mathematics, such as 
partial differential equations, machine learning, and
probability theory. In this discussion, we will focus on their relation to
energy minimization problems, but an exposition on their role in other areas
can be found, e.g., in \cites{A,F, Me}. 

We now  state the relevant definition in the form, which is most convenient for applications to energy optimization over Borel measures.
\begin{definition}\label{def:2PD}

A kernel $K: \Omega^2 \rightarrow \mathbb{R}$ is called \textbf{conditionally positive definite} if for every $\nu \in \mathcal{Z}(\Omega)$ (i.e.\ finite signed Borel measures satisfying $\nu(\Omega) = 0$), $I_K(\nu) \geq 0$.

If, moreover, $I_K(\nu) \geq 0$ for every finite signed Borel measure, i.e. $\nu \in \mathcal{M}(\Omega)$, then we call $K$ \textbf{positive definite}.

We call a kernel  \textbf{strictly positive definite} or \textbf{conditionally strictly positive definite} if it is positive definite or conditionally positive definite, respectively, and $I_K(\nu) = 0$ only if  $\nu = 0$. 

If there exists some $C \in \mathbb{R}$ such that $K+C$ is a (strictly) positive definite kernel, we call $K$ \textbf{(strictly) positive definite modulo an additive constant} (or \textbf{up to an additive constant}).
\end{definition}

A more standard way of defining positive definiteness of a kernel $K: \Omega^2 \rightarrow \mathbb{R}$ is by requiring that, for every $N \in \mathbb{N}$ and $\{z_i \}_{i=1}^N \subset \Omega$, the matrix $ \big[ K \big(  z_i, z_j  \big) \big]_{i,j=1}^N$ is positive semidefinite, i.e.\ for any sequence $\{c_i\}_{i=1}^N \subset \mathbb{R}$, the kernel $K$ satisfies the inequality
\begin{equation}\label{eq:PDDefMatrix}
\sum\limits_{i,j=1}^N c_i c_j K \big( z_i, z_j  \big)\geq 0.
\end{equation}
Since the kernel $K$ is continuous, this is clearly equivalent to Definition \ref{def:2PD} due to the weak$^*$ density of discrete measures in $\mathcal{M}(\Omega)$. Similarly, conditional positive definiteness is equivalent to \eqref{eq:PDDefMatrix}     with the additional condition $\sum c_i =0$.  We finally  remark that such an equivalence does not hold for the  strict version of these properties. 


A constant positive kernel, i.e. $K(x,y) = c > 0$ for all $x, y \in \Omega$, is necessarily positive definite, hence   such kernels always exist.   Moreover, the class of positive definite kernels is easily seen to be closed under addition, multiplication (a result known as the Schur product theorem), and  limits of uniformly convergent sequences.

\begin{lemma}\label{lem:PDArithmetic}
If $K$ and $L$ are positive definite kernels on $\Omega$, then so are $K+L$ and $KL$. If $K_1, K_2, ...,$ are positive definite and $\lim_{n \rightarrow \infty} K_n = K$ uniformly, then $K$ is positive definite. The statements regarding the sum and limit (but not the product) hold if we replace positive definiteness with conditional positive definiteness.
\end{lemma}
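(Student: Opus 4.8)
The plan is to verify the three closure properties directly from Definition~\ref{def:2PD}, working at the level of signed measures (rather than the matrix formulation), since the continuity of the kernels makes $I_K(\nu)$ well defined for every $\nu\in\mathcal M(\Omega)$. For the sum, fix an arbitrary $\nu\in\mathcal M(\Omega)$ (or $\nu\in\mathcal Z(\Omega)$ in the conditional case) and simply note that the double integral is linear in the kernel, so $I_{K+L}(\nu)=I_K(\nu)+I_L(\nu)\ge 0$ as a sum of two nonnegative terms. For the uniform limit, again fix $\nu$ and estimate $|I_{K_n}(\nu)-I_K(\nu)|\le \|K_n-K\|_\infty\,\|\nu\|^2$, where $\|\nu\|$ is the total variation norm of $\nu$; since each $I_{K_n}(\nu)\ge 0$ and the left-hand side tends to $0$, we get $I_K(\nu)\ge 0$ in the limit. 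Both of these arguments are insensitive to the extra constraint $\nu(\Omega)=0$, which is why they also work for conditional positive definiteness.

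The product (Schur product theorem) is the one place where the measure-theoretic formulation is awkward and the matrix/pointwise formulation is the natural tool, so here I would invoke the equivalence stated right after Definition~\ref{def:2PD}: by weak$^*$ density of discrete measures, $K$ and $L$ are positive definite iff for every finite point set $\{z_i\}_{i=1}^N\subset\Omega$ the Gram matrices $[K(z_i,z_j)]$ and $[L(z_i,z_j)]$ are positive semidefinite, and it suffices to check the same for $[K(z_i,z_j)L(z_i,z_j)]$, which is precisely the Hadamard (entrywise) product of the two. The classical proof is to write the positive semidefinite matrix $[K(z_i,z_j)]=\sum_k \lambda_k v_k v_k^{\mathsf T}$ via its spectral decomposition with $\lambda_k\ge 0$ (or, alternatively, to use a Cholesky-type factorization $K(z_i,z_j)=\langle u_i,u_j\rangle$ for vectors $u_i$ in some Euclidean space, coming from the fact that a psd matrix is a Gram matrix); then the Hadamard product of $v_kv_k^{\mathsf T}$ with any psd matrix $[L(z_i,z_j)]$ is psd because for scalars $c_i$ one has $\sum_{i,j}c_ic_j (v_k)_i(v_k)_j L(z_i,z_j)=\sum_{i,j}(c_i(v_k)_i)(c_j(v_k)_j)L(z_i,z_j)\ge 0$, and a nonnegative combination of psd matrices is psd. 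Summing over $k$ gives that $[K(z_i,z_j)L(z_i,z_j)]$ is psd, and passing back through the density equivalence yields positive definiteness of $KL$.

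Finally I would note the reason the product statement is explicitly excluded from the conditional case: the Hadamard-product argument above requires the full positive semidefiniteness of one of the Gram matrices (to apply the spectral/Gram decomposition without the mean-zero constraint), so it genuinely uses positive definiteness rather than merely conditional positive definiteness, and indeed the conditional analogue is false in general. The only mild subtlety to watch in writing this up cleanly is the passage between the integral definition and the finite-point-set inequality~\eqref{eq:PDDefMatrix}, but this is exactly the weak$^*$-density remark already recorded in the text, so no new obstacle arises there; the main (and essentially only nontrivial) step is the Schur product argument, which is classical.
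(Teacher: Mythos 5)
Your proposal is correct. The paper states this lemma without proof (the closure under addition and uniform limits is called ``easily seen'' and the product case is attributed to the Schur product theorem), and your argument---linearity of $I_K(\nu)$ in the kernel for the sum, the total-variation estimate $|I_{K_n}(\nu)-I_K(\nu)|\le \|K_n-K\|_\infty\,\|\nu\|_{TV}^2$ for the limit, and the spectral-decomposition proof of the Schur product theorem combined with the weak$^*$-density equivalence already recorded after Definition~\ref{def:2PD} for the product---is exactly the standard argument the authors are implicitly invoking.
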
 

For a continuous function $\phi :\Omega \rightarrow \mathbb R$, the kernel $\phi (x) \phi (y)$ is obviously positive definite, since 
$$  \int\limits_{\Omega} \int\limits_{\Omega} \phi (x) \phi(y)\, d \mu(x) d\mu (y)  = \Bigg( \int\limits_{\Omega} \phi (x)  d \mu(x)  \Bigg)^2 \ge 0. $$
Therefore, Lemma \ref{lem:PDArithmetic} provides a rich class of examples of positive definite kernels.

\begin{lemma}\label{lem:PDconstructed}
For $j \in \mathbb{N}_{0}$, let $\lambda_j \geq 0$ and $\phi_j: \Omega \rightarrow \mathbb{R}$ be continuous. Then if the series converges absolutely and uniformly, the kernel
\begin{equation}\label{eq:PDconstructed}
K(x,y) = \sum_{j=0}^{\infty} \lambda_j \phi_j(x) \phi_j(y)
\end{equation}
is positive definite.
\end{lemma}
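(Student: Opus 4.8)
The plan is to reduce the statement to a direct application of Lemma~\ref{lem:PDArithmetic}. First I would observe that for each fixed $j \in \mathbb{N}_0$, the kernel $K_j(x,y) := \lambda_j \phi_j(x)\phi_j(y)$ is positive definite: by the remark preceding the lemma, $\phi_j(x)\phi_j(y)$ is positive definite since $I_{\phi_j \phi_j}(\mu) = \left( \int_\Omega \phi_j \, d\mu \right)^2 \geq 0$, and scaling by the nonnegative constant $\lambda_j$ preserves this (indeed $I_{\lambda_j \phi_j \phi_j}(\mu) = \lambda_j \left( \int_\Omega \phi_j\, d\mu\right)^2 \geq 0$, or alternatively $\sqrt{\lambda_j}\,\phi_j$ is again a continuous function). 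Hence each partial sum $S_n(x,y) := \sum_{j=0}^n \lambda_j \phi_j(x)\phi_j(y)$ is positive definite, being a finite sum of positive definite kernels, by the sum part of Lemma~\ref{lem:PDArithmetic} and induction on $n$.

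Next I would argue that $K$ is a legitimate kernel, i.e.\ continuous and symmetric. Symmetry is immediate from the symmetry of each summand. For continuity, the absolute and uniform convergence of the series $\sum_j \lambda_j \phi_j(x)\phi_j(y)$ on $\Omega \times \Omega$ means $S_n \to K$ uniformly, and a uniform limit of continuous functions is continuous, so $K$ is continuous on the compact space $\Omega \times \Omega$. In particular $I_K(\mu)$ is well defined for all $\mu \in \mathcal{M}(\Omega)$.

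Finally I would invoke the limit part of Lemma~\ref{lem:PDArithmetic}: since $S_n \to K$ uniformly and each $S_n$ is positive definite, $K$ is positive definite. This completes the proof.

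I do not anticipate any serious obstacle here; the only point requiring a modicum of care is making sure the hypothesis ``the series converges absolutely and uniformly'' is genuinely used to guarantee uniform convergence of the partial sums to $K$ (so that the limit clause of Lemma~\ref{lem:PDArithmetic} applies) and hence the continuity of $K$ — absolute convergence alone, or pointwise convergence, would not suffice. One could alternatively bypass the explicit appeal to the limit lemma by estimating $I_K(\mu) = \lim_n I_{S_n}(\mu) \geq 0$ directly, justifying the interchange of sum and integral via uniform convergence and finiteness of $|\mu|(\Omega)$; but routing through Lemma~\ref{lem:PDArithmetic} is cleaner.
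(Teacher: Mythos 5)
Your proof is correct and follows exactly the route the paper intends: each summand $\lambda_j\phi_j(x)\phi_j(y)$ is positive definite by the displayed computation preceding the lemma, and Lemma~\ref{lem:PDArithmetic} (closure under finite sums and uniform limits) finishes the argument. Your added remarks on continuity of $K$ and on where uniform convergence is genuinely needed are accurate but do not change the substance.
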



In fact,  the well-known Mercer's Theorem (see Theorem \ref{thm:Mercer} in  Section \ref{sec:HS}) demonstrates that the representation \eqref{eq:PDconstructed} actually provides a characterization of positive definite kernels, see Corollary \ref{cor:PDCharacterization}. \\


In what follows, we  provide various  results which connect properties of the kernel $K$, the energy  functional $I_K$, and the minimizers of this  energy integral.   While some of these results  have previously appeared in the literature, a number of them  seem to be  new (we shall specifically point them out in the exposition below). In addition, it seems that even the known results have not all appeared simultaneously in a single text  (perhaps the most complete   prior exposition of this kind  is the  discussion of  lower semi-continuous kernels on compact sets in  \cite[Chapter 4]{BHS}). Moreover, the general results of this section form a basis for the long list of equivalences provided in Theorems \ref{thm:CPDEquivalences}, \ref{thm:PDEquivalences}, and \ref{thm:CSPDEquivalences} under some additional assumptions.  Therefore, for the sake of making our presentation self-contained and coherent, we have decided to include all the relevant background information (not just the new results) in this section. \\


In Section \ref{sec:CPDPDC1}, we explore the relation between conditional positive definiteness and positive definiteness up to an additive constant. We show that the latter implies the former (Lemma \ref{lem:PD-CPD}), but not vice versa (Example \ref{ex:x1+y1}). This relation will be revisited in Section \ref{sec:CPDDPC2}, where the converse implication is established under additional assumptions (Lemma \ref{lem:CPDtoPDC}). 

Section \ref{sec:MixedEnergy} discusses the equivalence of conditional positive definiteness and the arithmetic-mean inequality for mixed energies (Lemma  \ref{lem:BHS2inputAInequality}), as well as the similar equivalence between positive definiteness and the  geometric-mean inequality, Lemma \ref{lem:BHS2inputGInequality}. While the fact that (conditional) positive definiteness implies such  inequalities is well-known, we did not find the converse implication  in the literature.

In Section \ref{sec:PDConv}, we concentrate on the interplay between conditional positive definiteness of the kernel $K$ and the convexity of the corresponding energy functional $I_K$. In particular, Proposition \ref{prop:ConvexCPDEqual2} demonstrates that the two notions are equivalent. Again, only one direction seems to have appeared in the literature before.

Section \ref{sec:MinMeas} reviews some basic facts about the potential of the global and local   minimizers of the energy integral $I_K$ (Theorem \ref{thm:Constant on Supp} and Corollary \ref{cor:Constant on Supp}, respectively): if $\mu$ is a (local) minimizer of $I_K$, then  $U_K^{\mu}$ is constant on $\operatorname{supp}(\mu)$.

In Section \ref{sec:HS}, we recall the connection between the positive definiteness of the kernel $K$ and  the properties of the generated Hilbert--Schmidt operator $T_{K,\mu}$. Lemma  \ref{lem:Hilbert-Schmidt} demonstrates the equivalence between positive definiteness of $K$ and positivity of $T_{K,\mu}$, while Mercer's Theorem (Theorem \ref{thm:Mercer}) provides the absolutely and uniformly  convergent expansion of a positive definite kernel in term of the eigenfunctions of the associated Hilbert--Schmidt operator. 

In Section  \ref{sec:SqRoot}, we demonstrate the existence of the ``convolution square root'' of a positive definite kernel (see \eqref{eq:Convolution} in Proposition \ref{prop:Convolution}). In the  case  $\Omega = \mathbb S^{d-1}$, this fact has been observed in \cites{BD, BDM}, but the general case presented here is new. 

Finally, Section \ref{sec:MinHS} explores the relation between energy minimizers and Hilbert--Schmidt operators. In particular, Lemma \ref{lem:OperatorPos} shows that  if $\mu$ minimizes  $I_K$, then the associated operator $T_{K,\mu}$ is positive, which leads to an  important fact (Lemma \ref{lem:PosDefonSupp}): if $\mu$ is a (local) minimizer of $I_K$, then $K$ is positive definite (up to a constant) on the support of $\mu$. Similar results have appeared in  various papers on energy minimization, e.g.  \cites{CFP, FS}.

\subsection{Conditional Positive Definiteness and Positive Definiteness up to a Constant}\label{sec:CPDPDC1}

Since adding a constant to a kernel obviously does not affect the minimizers,  it is natural to  consider kernels that are (strictly) positive definite modulo a constant. However, adding a constant also  never changes conditional (strict) positive definiteness, as for all $C \in \mathbb{R}$ and $\nu \in \mathcal{Z}(\Omega)$,
\begin{equation*}
I_{K+C}(\nu) = I_K(\nu) + (\nu(\Omega))^2 C = I_K(\nu).
\end{equation*}
Since (strict) positive definiteness implies conditional (strict) positive definiteness, we arrive at the following lemma.

\begin{lemma}\label{lem:PD-CPD}
If $K$ is (strictly) positive definite modulo a constant, then $K$ is conditionally (strictly) positive definite.
\end{lemma}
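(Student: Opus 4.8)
The plan is to read the result straight off the displayed identity $I_{K+C}(\nu) = I_K(\nu) + (\nu(\Omega))^2 C$ that precedes the statement. First I would fix, using the hypothesis, a constant $C \in \mathbb{R}$ for which $K + C$ is positive definite (respectively, strictly positive definite). Then I would take an arbitrary $\nu \in \mathcal{Z}(\Omega)$: since such $\nu$ satisfies $\nu(\Omega) = 0$, the identity collapses to $I_K(\nu) = I_{K+C}(\nu)$. Because $K+C$ is positive definite, $I_{K+C}(\nu) \geq 0$ for every signed measure in $\mathcal{M}(\Omega)$, in particular for our $\nu$, whence $I_K(\nu) \geq 0$; as $\nu \in \mathcal{Z}(\Omega)$ was arbitrary, $K$ is conditionally positive definite by Definition \ref{def:2PD}.

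For the strict case I would argue identically: if $I_K(\nu) = 0$ for some $\nu \in \mathcal{Z}(\Omega)$, then $I_{K+C}(\nu) = I_K(\nu) = 0$, and strict positive definiteness of $K+C$ forces $\nu = 0$; hence $K$ is conditionally strictly positive definite. The non-strict and strict statements are thus handled by the same two lines.

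There is no real obstacle here: the entire content is the cancellation of the term $(\nu(\Omega))^2 C$ on the subclass $\mathcal{Z}(\Omega)$, combined with the trivial inclusion $\mathcal{Z}(\Omega) \subset \mathcal{M}(\Omega)$. The only point worth flagging is that this same cancellation explains why the converse fails in general (cf.\ Example \ref{ex:x1+y1}): conditional positive definiteness only controls $I_K$ on the smaller class $\mathcal{Z}(\Omega)$, and on that class the additive constant is invisible, so in general one cannot hope to recover positive definiteness of $K+C$ on all of $\mathcal{M}(\Omega)$ for any single $C$ without further hypotheses (which is exactly what Section \ref{sec:CPDDPC2} supplies).
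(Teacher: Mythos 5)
Your argument is correct and is essentially identical to the paper's: the displayed identity $I_{K+C}(\nu) = I_K(\nu) + (\nu(\Omega))^2 C$ shows the constant is invisible on $\mathcal{Z}(\Omega)$, and the inclusion $\mathcal{Z}(\Omega)\subset\mathcal{M}(\Omega)$ then transfers (strict) positive definiteness of $K+C$ to conditional (strict) positive definiteness of $K$. Nothing to add.
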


In Section \ref{sec:InvMeasEnergy} (Lemma \ref{lem:CPDtoPDC}), we will demonstrate that the converse of Lemma \ref{lem:PD-CPD} can hold under certain conditions. However, it does not hold in general: 
\begin{example}\label{ex:x1+y1}
Consider $K: \mathbb{S}^{d-1}\times  \mathbb{S}^{d-1} \rightarrow \mathbb{R}$ defined by $K(x,y) = x_1 + y_1$, where $x_1 = \langle x, e_1 \rangle$. Then $K$ is conditionally positive definite, but not positive definite modulo a constant.
\end{example}

\begin{proof}
For all $\nu \in \mathcal{Z}(\mathbb{S}^{d-1})$,
\begin{equation*}
I_K(\nu) = 2 \int\limits_{\mathbb{S}^{d-1}} \int\limits_{\mathbb{S}^{d-1}}  x_1 \,d\nu(x) d\nu(y) = 0,
\end{equation*}
so $K$ is conditionally positive definite.

Now we show there is no constant $C$ such that $K+C$ is positive definite. If $C < 0$, then
\begin{equation*}
I_{K+C}(\sigma) = 2 \int\limits_{\mathbb{S}^{d-1}} x_1 d \sigma(x) + C = C < 0.
\end{equation*}
Suppose that $C \geq 0$ and let $\mu = (C+1)\delta_{-e_1} - C \delta_{e_1} \in \mathcal{M}(\mathbb{S}^{d-1})$. Then
\begin{align*}
I_{K+C}(\mu)& = 2 \mu(\mathbb{S}^{d-1}) \int\limits_{\mathbb{S}^{d-1}} x_1\, d\mu(x) + C (\mu(\mathbb{S}^{d-1}))^2 \\
& = 2 (-2C-1) + C  = -3C-2 < 0.
\end{align*}
The proof is now complete.
\end{proof}




\subsection{Positive Definiteness and Inequalities for Mixed Energies}\label{sec:MixedEnergy}

We first make the observation that the (conditional) positive definiteness of the kernel can be characterized by the inequalities for mixed energies in terms of arithmetic or geometric means. While the validity of such inequalities for positive definite kernels is well known, see, e.g.,  \cite[Chapter 4]{BHS}, their sufficiency doesn't seem to have appeared in previous literature.   We summarize these facts in the following two lemmas. The first one connects conditional positive definiteness to the arithmetic mean inequality. 

\begin{lemma} \label{lem:BHS2inputAInequality} 
Suppose $K$ is a kernel on $\Omega \times \Omega$. Then the following conditions are equivalent:
\begin{enumerate}
\item \label{cpd1} $K$ is conditionally positive definite.
\item \label{cpd2} For every pair of Borel probability measures $\mu_1$ and $\mu_2$  on $\Omega$, the mutual energy $I_K(\mu_1,\mu_2)$ satisfies
\begin{equation}\label{eq:2inputAInequality}
I_K(\mu_1,\mu_2)\leq \frac12 \big( I_K(\mu_1)+I_K(\mu_2) \big).
\end{equation}
\item  \label{cpd3} Inequality \eqref{eq:2inputAInequality} is satisfied for any pair of signed Borel measures of total  mass one. 
\end{enumerate}


In addition, conditional strict positive definiteness of $K$ is equivalent to the fact that the inequality in  \eqref{eq:2inputAInequality} is strict unless $\mu_1=\mu_2$.
\end{lemma}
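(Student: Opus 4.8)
The plan is to prove the chain of equivalences \eqref{cpd1} $\Rightarrow$ \eqref{cpd3} $\Rightarrow$ \eqref{cpd2} $\Rightarrow$ \eqref{cpd1}, using the key algebraic identity relating mixed energies to the energy of a difference. Concretely, for any two measures $\mu_1,\mu_2$ of the same total mass, the measure $\nu = \mu_1 - \mu_2$ has mean zero, and bilinearity plus symmetry of $I_K$ give
\begin{equation*}
I_K(\mu_1 - \mu_2) = I_K(\mu_1) + I_K(\mu_2) - 2 I_K(\mu_1,\mu_2).
\end{equation*}
Thus the inequality \eqref{eq:2inputAInequality} is, after multiplying by $2$, precisely the statement $I_K(\mu_1-\mu_2) \geq 0$.

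First I would prove \eqref{cpd1} $\Rightarrow$ \eqref{cpd3}: if $K$ is conditionally positive definite and $\mu_1,\mu_2 \in \widetilde{\mathbb P}(\Omega)$ are signed measures of mass one, then $\nu := \mu_1 - \mu_2 \in \mathcal Z(\Omega)$, so $I_K(\nu) \geq 0$, which by the displayed identity is exactly \eqref{eq:2inputAInequality}. The implication \eqref{cpd3} $\Rightarrow$ \eqref{cpd2} is trivial, since every Borel probability measure is a signed measure of mass one. For \eqref{cpd2} $\Rightarrow$ \eqref{cpd1}, I would take an arbitrary $\nu \in \mathcal Z(\Omega)$ and use the Jordan decomposition $\nu = \nu^+ - \nu^-$ with $\nu^+(\Omega) = \nu^-(\Omega) =: t \geq 0$. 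If $t = 0$ then $\nu = 0$ and $I_K(\nu) = 0 \geq 0$ trivially; if $t > 0$, set $\mu_i := \nu^+/t$ and $\mu_j := \nu^-/t$, which are Borel probability measures, and apply \eqref{eq:2inputAInequality} to them; multiplying the resulting inequality by $2t^2$ and invoking the identity gives $I_K(\nu) = t^2 I_K(\mu_i - \mu_j) \geq 0$.

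For the strict statement, I would refine the same argument. Assuming conditional strict positive definiteness, if $\mu_1 \neq \mu_2$ are of mass one, then $\nu = \mu_1 - \mu_2 \in \mathcal Z(\Omega)$ is nonzero, hence $I_K(\nu) > 0$, giving strict inequality in \eqref{eq:2inputAInequality}. Conversely, if \eqref{eq:2inputAInequality} is strict whenever $\mu_1 \neq \mu_2$, I first get conditional positive definiteness from the equivalence already proven, and then for a nonzero $\nu \in \mathcal Z(\Omega)$ with Jordan decomposition as above, $t > 0$ and $\nu^+/t \neq \nu^-/t$ (since $\nu^+ \neq \nu^-$), so strict inequality applied to these probability measures yields $I_K(\nu) = t^2 I_K(\nu^+/t - \nu^-/t) > 0$, establishing $I_K(\nu) = 0$ only if $\nu = 0$.

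I do not anticipate a serious obstacle here; the only point requiring a little care is the normalization in \eqref{cpd2} $\Rightarrow$ \eqref{cpd1}, namely handling the degenerate case $\nu = 0$ separately (so that dividing by $t$ is legitimate) and correctly tracking the factor $t^2$ that arises from the scaling $\nu = t(\mu_i - \mu_j)$ under the quadratic functional $I_K$. Everything else is a direct consequence of the bilinearity and symmetry of $I_K$ together with Definition~\ref{def:2PD}.
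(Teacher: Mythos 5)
Your proposal is correct and follows essentially the same route as the paper: the chain \eqref{cpd1} $\Rightarrow$ \eqref{cpd3} $\Rightarrow$ \eqref{cpd2} $\Rightarrow$ \eqref{cpd1} via the identity $I_K(\mu_1-\mu_2)=I_K(\mu_1)-2I_K(\mu_1,\mu_2)+I_K(\mu_2)$, with the Jordan decomposition merely making explicit the paper's assertion that any $\nu\in\mathcal Z(\Omega)$ can be written as $c(\mu_1-\mu_2)$ with $\mu_1,\mu_2\in\mathbb P(\Omega)$ and $c\ge 0$. The handling of the strict case also matches the paper's argument.
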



\begin{proof}
Suppose that $K$ is conditionally positive definite. Then for any $\mu_1, \mu_2 \in \widetilde{\mathbb{P}}(\Omega)$, $\mu_1 - \mu_2 \in \mathcal{Z}(\Omega)$, so

\begin{equation*}
0\leq  I_K( \mu_1 - \mu_2) = I_K(\mu_1) - 2 I_K(\mu_1, \mu_2) + I_K(\mu_2),
\end{equation*}
which proves \eqref{eq:2inputAInequality}. Thus, \eqref{cpd1} implies \eqref{cpd3}, which in its turn obviously implies \eqref{cpd2}.

Now assume condition \eqref{cpd2}, i.e. that \eqref{eq:2inputAInequality} holds for all $\mu_1, \mu_2 \in \mathbb{P}(\Omega)$. For any $\mu \in \mathcal{Z}(\Omega)$, there exists $c \geq 0$ and probability measures $\mu_1, \mu_2 \in \mathbb{P}(\Omega)$ such that $\mu = c( \mu_1 - \mu_2)$. We then have that
\begin{equation}\label{eq:qqq}
I_K(\mu) = I_K \big( c( \mu_1 - \mu_2)\big) = c^2 \Big( I_K(\mu_1) - 2 I_K(\mu_1, \mu_2) + I_K(\mu_2) \Big) \geq 0,
\end{equation}
so $K$ must be conditionally positive definite.

If $K$ is conditionally strictly positive definite and the measure $\mu_1 - \mu_2 $ is nonzero, then $I_K(\mu_1 - \mu_2) > 0$, making  inequality \eqref{eq:2inputAInequality}  strict. Conversely, if  \eqref{eq:2inputAInequality} is strict whenever $\mu_1 - \mu_2 \neq 0$, then equality in  \eqref{eq:qqq} can hold only if $\mu_1 = \mu_2$ or $c = 0$, i.e. $K$ is conditionally strictly positive definite.
\end{proof}

The second lemma is very similar: it shows that positive definiteness is equivalent to  the geometric mean inequality for the mixed energy. 

\begin{lemma}\label{lem:BHS2inputGInequality}

Suppose $K$ is a kernel on $\Omega \times \Omega$. Then $K$ is positive definite if and only if $I_K (\mathbb P (\Omega)) \subset [0,\infty)$ and  for all $\mu_1, \mu_2 \in \mathbb{P}(\Omega)$, the mutual energy $I_K(\mu_1,\mu_2)$ satisfies
\begin{equation}\label{eq:2inputGInequality}
I_K(\mu_1,\mu_2)\leq \sqrt{I_K(\mu_1) I_K(\mu_2)},
\end{equation}
 $K$ is strictly positive definite if and only if  the  inequality in \eqref{eq:2inputGInequality}  is strict unless $\mu_1=\mu_2$. 
\end{lemma}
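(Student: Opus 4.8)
The plan is to prove both directions directly from Definition~\ref{def:2PD}, using that $I_K$ is a symmetric bilinear form on the real vector space $\mathcal M(\Omega)$, together with the observation that any signed measure splits, via its Jordan decomposition, into suitably rescaled probability measures. The necessity direction is just Cauchy--Schwarz; the sufficiency direction is the one requiring an idea, and is where I expect the only real (if modest) obstacle to lie.

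\emph{Necessity.} Suppose $K$ is positive definite. Then $I_K(\mu)\ge 0$ for every $\mu\in\mathcal M(\Omega)$, so in particular $I_K(\mathbb P(\Omega))\subset[0,\infty)$; moreover $I_K$ is a positive semidefinite symmetric bilinear form, so the Cauchy--Schwarz inequality applies. Concretely, expanding $0\le I_K(\mu_1+t\mu_2)=I_K(\mu_1)+2t\,I_K(\mu_1,\mu_2)+t^2 I_K(\mu_2)$ as a nonnegative quadratic in $t\in\mathbb R$ (handling separately the degenerate case $I_K(\mu_2)=0$, which already forces $I_K(\mu_1,\mu_2)=0$) and looking at the discriminant gives $I_K(\mu_1,\mu_2)^2\le I_K(\mu_1)I_K(\mu_2)$, whence $I_K(\mu_1,\mu_2)\le|I_K(\mu_1,\mu_2)|\le\sqrt{I_K(\mu_1)I_K(\mu_2)}$, i.e.\ \eqref{eq:2inputGInequality}.

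\emph{Sufficiency.} Conversely, assume $I_K(\mathbb P(\Omega))\subset[0,\infty)$ and that \eqref{eq:2inputGInequality} holds for all $\mu_1,\mu_2\in\mathbb P(\Omega)$. Let $\nu\in\mathcal M(\Omega)$ be arbitrary, with Jordan decomposition $\nu=\nu^+-\nu^-$, and set $a=\nu^+(\Omega)$, $b=\nu^-(\Omega)$. If $a=0$ or $b=0$, then $\nu$ equals $\pm$ a nonnegative measure and, after rescaling it to a probability measure, $I_K(\nu)\ge 0$ (the case $\nu=0$ being trivial). If $a,b>0$, put $\mu_1=a^{-1}\nu^+$ and $\mu_2=b^{-1}\nu^-\in\mathbb P(\Omega)$; then bilinearity gives
\begin{align*}
I_K(\nu)&=a^2 I_K(\mu_1)-2ab\,I_K(\mu_1,\mu_2)+b^2 I_K(\mu_2)\\
&\ge a^2 I_K(\mu_1)-2ab\sqrt{I_K(\mu_1)I_K(\mu_2)}+b^2 I_K(\mu_2)=\big(a\sqrt{I_K(\mu_1)}-b\sqrt{I_K(\mu_2)}\big)^2\ge 0,
\end{align*}
where the first inequality uses \eqref{eq:2inputGInequality} and $a,b>0$ (and is valid regardless of the sign of $I_K(\mu_1,\mu_2)$). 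Hence $I_K(\nu)\ge 0$, so $K$ is positive definite. The only subtlety here is spotting that the Jordan decomposition reduces the general inequality $I_K(\nu)\ge 0$ to the probability-measure hypothesis, after which it is a completion of squares.

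\emph{The strict statement.} If $K$ is strictly positive definite, then $I_K(\mu)>0$ for every $\mu\in\mathbb P(\Omega)$ since $\mu\ne 0$; and if equality held in \eqref{eq:2inputGInequality} for some $\mu_1\ne\mu_2$, then $I_K(\mu_1,\mu_2)^2=I_K(\mu_1)I_K(\mu_2)$ would make the (genuine, since $I_K(\mu_2)>0$) quadratic $t\mapsto I_K(\mu_1+t\mu_2)$ vanish at its double root $t_0$, forcing $\mu_1+t_0\mu_2=0$ and hence, by comparing total masses, $t_0=-1$ and $\mu_1=\mu_2$ --- a contradiction. Conversely, strictness of \eqref{eq:2inputGInequality} for all distinct probability measures first yields $I_K(\mu)>0$ for every $\mu\in\mathbb P(\Omega)$ (if $I_K(\mu)=0$, Cauchy--Schwarz would give $I_K(\mu,\mu_2)=0$ for all $\mu_2\ne\mu$, contradicting strictness), and then the computation in the sufficiency step shows $I_K(\nu)=0$ can occur only when $\nu^+/a=\nu^-/b$ and $a=b$, i.e.\ $\nu=0$; so $K$ is strictly positive definite.
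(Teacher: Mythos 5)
Your proof is correct and follows essentially the same route as the paper's: Cauchy--Schwarz via the nonnegative quadratic $t \mapsto I_K(t\mu_1 - \mu_2)$ for necessity, and the decomposition $\nu = a\mu_1 - b\mu_2$ with $a,b\ge 0$ and $\mu_1,\mu_2\in\mathbb P(\Omega)$ (which you realize explicitly via the Jordan decomposition) plus completion of the square for sufficiency. Your handling of the degenerate cases and of the strict version is somewhat more detailed than the paper's, but the underlying argument is identical.
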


Observe that the proof below shows that, just like in Lemma \ref{lem:BHS2inputAInequality}, we could replace ${\mathbb{P}}(\Omega)$ with $\widetilde{\mathbb{P}}(\Omega)$.  Heuristically, this statement  says that the bilinear form $I_K (\mu_1,\mu_2)$ defines an inner product on measures if and only if $K$ is positive definite, and \eqref{eq:2inputGInequality} is just the Cauchy--Schwarz inequality.

\begin{proof}
Suppose that $K$ is  positive definite. For any $t \in \mathbb{R}$ and $\mu_1, \mu_2 \in \mathbb{P}(\Omega)$,  we define 
\begin{equation*}
g(t) := t^2 I_K(\mu_1) - 2t I_K(\mu_1, \mu_2) + I_K(\mu_2) = I_K( t \mu_1 - \mu_2) \geq 0.
\end{equation*}
Thus the discriminant $4 I_K( \mu_1, \mu_2)^2 - 4 I_K(\mu_1) I_K(\mu_2)$ of the quadratic polynomial $g(t)$ is nonpositive, which yields  \eqref{eq:2inputGInequality}. If $K$ is strictly positive definite, $g(t) = 0$ has a root only if  $t \mu_1 - \mu_2 = 0$, which implies $t=1$ and  $\mu_1 = \mu_2$.

Suppose instead that \eqref{eq:2inputGInequality} holds for all probability measures. For any $\mu \in \mathcal{M}(\Omega)$, there exists $a, b \geq 0$ and $\mu_1, \mu_2 \in \mathbb{P}(\Omega)$ such that $\mu = a\mu_1 - b\mu_2$. We then have that
\begin{equation*}
I_K(\mu) = a^2 I_K(\mu_1) - 2 a b I_K(\mu_1, \mu_2) +  b^2 I_K(\mu_2)  \geq ( a \sqrt{I_K(\mu_1)} - b \sqrt{I_K(\mu_2)})^2 \geq 0,
\end{equation*}
implying that  $K$ is positive definite. If \eqref{eq:2inputGInequality} is strict unless $\mu_1 = \mu_2$, then the inequality above is strict unless $\mu=0$, i.e. $K$  is strictly  positive definite.
\end{proof}




\subsection{Positive Definiteness and Convexity of the Energy Functional}\label{sec:PDConv}

Convexity plays an important role in optimization problems, and so do various versions of positive definiteness.  Therefore, it is not surprising that the two notions are related. In fact, as we shall see, in many settings, they are equivalent. 

Such  equivalences, in different forms, have previously  appeared in the literature  \cite{BFGMPV,CSh,DPZ,Mec,P,PZ,ZDP}. This connections appears to be common knowledge in some references, but is largely  overlooked in many other. In this subsection, we take a deeper look at this phenomenon.

\begin{definition}
    \label{def:convexity}
Let  $K: \Omega \times \Omega  \rightarrow \mathbb{R}$ be a kernel. We say that $I_K$ is \textbf{convex at} $\mu \in \mathbb{P} (\Omega) $ if for every  $\nu \in \mathbb{P}(\Omega)$ there exists some $t_\nu \in (0,1]$  such that for all $t \in [0,t_\nu)$
\begin{equation}\label{eq:2inputConvexDef}
I_K((1-t) \mu + t \nu) \leq (1-t) I_K(\mu) + t I_K(\nu).
\end{equation}
We say $I_K$ is \textbf{convex} on $\mathbb{P}(\Omega)$ if inequality \eqref{eq:2inputConvexDef} holds for every $\mu$, $\nu \in \mathbb{P}( \Omega)$ and  all $t\in[0,1]$; it is said to be \textbf{strictly convex}, if the inequality is strict for all $ t\in (0,1) $ unless $ \mu = \nu $.
\end{definition}

Similarly to the above definition, we consider (strict) convexity on $ \widetilde {\mathbb P}(\Omega) $ and $ \mathcal M(\Omega) $. Unless noted otherwise, convexity is understood on $ \mathbb P(\Omega) $.

We observe that convexity of $I_K$ on $\mathbb{P}(\Omega)$ is equivalent to the fact that $I_K$  is  convex at all $\mu \in \mathbb{P} (\Omega)$. Indeed, if \eqref{eq:2inputConvexDef} fails for some  $\mu$, $\nu \in \mathbb{P}( \Omega)$ and some $t\in (0,1)$, then the quadratic polynomial $f(t) =  I_K((1-t) \mu + t \nu)$ is not convex on the  interval $[0,1]$, i.e.   $f''(t) <0$ for all   $t\in [0,1]$ and  $I_K$ fails to be convex at  $\mu$. \\

We first show that convexity is equivalent to the arithmetic mean inequality \eqref{eq:2inputAInequality} for mixed energies.

\begin{lemma}\label{lem:ConvexEqual2}
The energy functional $I_K$ is convex at $\mu \in \mathbb{P}(\Omega)$ if and only if for all $\nu \in \mathbb{P}(\Omega)$,
\begin{equation}\label{eq:AM2}
I_K(\mu, \nu)\leq \frac12 \big( I_K(\mu)+I_K(\nu) \big).
\end{equation}
Consequently, $I_K$ is convex on  $\mathbb{P}(\Omega)$ if and only if  inequality \eqref{eq:AM2} holds for all  $\mu, \nu \in \mathbb{P}(\Omega)$.

In addition, strict convexity of $ I_K $ is equivalent to the above inequality being strict unless $\mu=\nu$.
\end{lemma}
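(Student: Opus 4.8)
The plan is to exploit the fact that $t \mapsto I_K((1-t)\mu + t\nu)$ is a quadratic polynomial in $t$, so convexity is governed entirely by the sign of its (constant) second derivative. Expanding bilinearly,
\begin{equation*}
f(t) := I_K((1-t)\mu + t\nu) = (1-t)^2 I_K(\mu) + 2t(1-t) I_K(\mu,\nu) + t^2 I_K(\nu),
\end{equation*}
and a direct computation gives $f''(t) = 2\big(I_K(\mu) - 2I_K(\mu,\nu) + I_K(\nu)\big) = 2 I_K(\mu - \nu)$, a constant independent of $t$. The comparison $(1-t)I_K(\mu) + t I_K(\nu) - f(t) = t(1-t)\big(I_K(\mu) - 2I_K(\mu,\nu) + I_K(\nu)\big) = t(1-t) I_K(\mu-\nu)$ is the key identity; everything follows from tracking its sign.

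First I would prove the forward direction of the first equivalence: if $I_K$ is convex at $\mu$, then for each $\nu$ there is $t_\nu \in (0,1]$ with \eqref{eq:2inputConvexDef} holding on $[0,t_\nu)$; picking any $t \in (0,t_\nu)$ and using the displayed identity gives $t(1-t) I_K(\mu-\nu) \geq 0$, hence $I_K(\mu-\nu) \geq 0$, which upon expansion is exactly \eqref{eq:AM2}. Conversely, if \eqref{eq:AM2} holds for all $\nu \in \mathbb{P}(\Omega)$, then $I_K(\mu-\nu) \geq 0$ for every such $\nu$, so $t(1-t)I_K(\mu-\nu) \geq 0$ for all $t \in [0,1]$, giving \eqref{eq:2inputConvexDef} on all of $[0,1]$ — in particular with $t_\nu = 1$. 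The "consequently" clause is then immediate by quantifying over $\mu$, and it matches the observation already made in the text that convexity on $\mathbb{P}(\Omega)$ is equivalent to convexity at every point.

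For the strictness claim, I would argue that strict convexity of $I_K$ means $f(t) < (1-t)I_K(\mu) + tI_K(\nu)$ for all $t \in (0,1)$ whenever $\mu \neq \nu$, which by the identity is equivalent to $t(1-t)I_K(\mu-\nu) > 0$ for such $t$, i.e. $I_K(\mu-\nu) > 0$; expanding, this says precisely that \eqref{eq:AM2} is strict unless $\mu = \nu$. All directions are reversible since $t(1-t) > 0$ on $(0,1)$. I do not anticipate a serious obstacle here — the only point requiring a little care is the logical bookkeeping in the "convex at $\mu$" direction, namely extracting the pointwise inequality $I_K(\mu-\nu) \geq 0$ from the a priori weaker hypothesis that \eqref{eq:2inputConvexDef} holds merely for small $t$, and then observing that this weak hypothesis in fact self-improves to hold for all $t \in [0,1]$ because the relevant quantity $I_K(\mu-\nu)$ carries no $t$-dependence.
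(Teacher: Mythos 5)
Your proposal is correct and follows essentially the same route as the paper: both expand the quadratic $t\mapsto I_K((1-t)\mu+t\nu)$ and observe that the gap between the chord and the functional equals $t(1-t)\bigl(I_K(\mu)-2I_K(\mu,\nu)+I_K(\nu)\bigr)$, whose sign for any one $t\in(0,1)$ determines it for all of them. Your packaging of that quantity as $t(1-t)I_K(\mu-\nu)$ is exactly the identity the paper uses in its alternative proof via Proposition \ref{p:mircea}, so there is nothing to flag.
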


\begin{proof}
Let $\nu \in \mathbb{P}(\Omega)$ and assume  that the arithmetic-mean inequality \eqref{eq:AM2} holds. Then for all $t \in [0,1]$, \begin{equation}\label{eq:ConvexEqual2}
I_K( (1-t) \mu + t \nu) = (1-t)^2 I_K(\mu) + 2 (1-t) t I_K(\mu, \nu) + t^2 I_K(\nu) \leq (1-t) I_K(\mu) + t I_K(\nu).
\end{equation}
So $I_K$ is indeed convex at $\mu$.

For the converse direction, assume that $I_K$ is convex at $\mu$. Then for any $\nu \in \mathbb{P}(\Omega)$ and  $t>0$ sufficiently small, inequality \eqref{eq:ConvexEqual2} holds, so
\begin{equation*}
2 (1-t) t I_K(\mu, \nu) \leq t(1-t) (I_K(\mu) + I_K(\nu)).
\end{equation*}
Dividing by $t (1-t)$, we obtain the  arithmetic mean inequality \eqref{eq:AM2}. Lastly, for  strictly convex $ I_K $, the proof is the same, with all inequalities strict unless $ \mu = \nu $.
\end{proof}

Observe that one can easily replace $\mathbb P (\Omega)$ with $\widetilde{\mathbb P}(\Omega)$ in Lemma \ref{lem:ConvexEqual2}. At the same time, according to parts \eqref{cpd2}-\eqref{cpd3} of Lemma \ref{lem:BHS2inputAInequality}, the validity of \eqref{eq:AM2} on  $\mathbb P (\Omega)$ is equivalent to its validity on  $\widetilde{\mathbb P}(\Omega)$. Hence we obtain the following corollary. 

\begin{corollary}\label{c:convexequiv}
$I_K$ is (strictly) convex on ${\mathbb P}(\Omega)$ if and only if it is (strictly) convex on $\widetilde{\mathbb P}(\Omega)$. 
\end{corollary}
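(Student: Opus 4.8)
The plan is to concatenate the two equivalences already in hand. By Lemma~\ref{lem:ConvexEqual2}, together with the remark immediately following it (which notes that $\mathbb P(\Omega)$ may be replaced by $\widetilde{\mathbb P}(\Omega)$ throughout that lemma), convexity of $I_K$ on $\mathbb P(\Omega)$ is equivalent to the arithmetic-mean inequality \eqref{eq:AM2} holding for all $\mu,\nu\in\mathbb P(\Omega)$, while convexity of $I_K$ on $\widetilde{\mathbb P}(\Omega)$ is equivalent to \eqref{eq:AM2} holding for all $\mu,\nu\in\widetilde{\mathbb P}(\Omega)$. Hence it suffices to prove that \eqref{eq:AM2} on $\mathbb P(\Omega)$ is equivalent to \eqref{eq:AM2} on $\widetilde{\mathbb P}(\Omega)$.

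This last equivalence is precisely the content of parts \eqref{cpd2}--\eqref{cpd3} of Lemma~\ref{lem:BHS2inputAInequality}: each of them is equivalent to conditional positive definiteness of $K$. Stringing the implications together gives $I_K$ convex on $\mathbb P(\Omega)$ $\iff$ $K$ conditionally positive definite $\iff$ $I_K$ convex on $\widetilde{\mathbb P}(\Omega)$, which is the non-strict assertion.

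For the strict version I would repeat the argument verbatim with every inequality replaced by its strict form. By the final sentence of Lemma~\ref{lem:ConvexEqual2}, strict convexity of $I_K$ on $\mathbb P(\Omega)$ (respectively on $\widetilde{\mathbb P}(\Omega)$) is equivalent to \eqref{eq:AM2} being strict unless $\mu=\nu$ for all $\mu,\nu$ ranging over $\mathbb P(\Omega)$ (respectively $\widetilde{\mathbb P}(\Omega)$); and by the last assertion of Lemma~\ref{lem:BHS2inputAInequality}, either of these strict inequalities is in turn equivalent to conditional strict positive definiteness of $K$. Chaining these equivalences yields the strict case.

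Because the statement is a pure concatenation of previously established equivalences, I do not anticipate a genuine obstacle. The only points deserving a moment's care are: (i) verifying that the quantifier structure of the strict statements matches on both sides, i.e.\ that ``strict unless $\mu=\nu$'' is indeed the hypothesis appearing in both directions of Lemma~\ref{lem:ConvexEqual2} and coincides with the formulation of conditional strict positive definiteness used in Lemma~\ref{lem:BHS2inputAInequality}; and (ii) confirming that the substitution of $\widetilde{\mathbb P}(\Omega)$ for $\mathbb P(\Omega)$ in Lemma~\ref{lem:ConvexEqual2} is legitimate, which holds because the proof of that lemma only manipulates the combination $(1-t)\mu+t\nu$, and this remains in $\widetilde{\mathbb P}(\Omega)$ whenever $\mu,\nu\in\widetilde{\mathbb P}(\Omega)$ and $t\in[0,1]$.
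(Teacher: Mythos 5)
Your proposal is correct and follows essentially the same route as the paper: the corollary is obtained by combining Lemma~\ref{lem:ConvexEqual2} (with $\mathbb P(\Omega)$ replaced by $\widetilde{\mathbb P}(\Omega)$) with the equivalence of parts \eqref{cpd2} and \eqref{cpd3} of Lemma~\ref{lem:BHS2inputAInequality}, both passing through conditional (strict) positive definiteness. The two points of care you flag are exactly the ones that make the chaining legitimate, and both check out.
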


Lemmas \ref{lem:BHS2inputAInequality} and \ref{lem:ConvexEqual2} together clearly imply the desired equivalence between convexity and conditional positive definiteness:

\begin{proposition}\label{prop:ConvexCPDEqual2}
Let $K: \Omega \times \Omega  \rightarrow \mathbb{R}$ be  a kernel. The kernel  $K$ is conditionally (strictly) positive definite if and only if the energy functional $I_K$ is (strictly) convex on $\mathbb{P}(\Omega)$ (or, equivalently, on $\widetilde{\mathbb P}(\Omega)$). 
\end{proposition}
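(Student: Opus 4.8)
The plan is to assemble the proposition directly from the two preceding results, treating it as a bookkeeping exercise rather than a fresh argument. The key observation is that Lemma~\ref{lem:BHS2inputAInequality} and Lemma~\ref{lem:ConvexEqual2} each characterize a different property in terms of the \emph{same} intermediate condition: the arithmetic-mean inequality for mixed energies, namely $I_K(\mu_1,\mu_2)\le\frac12(I_K(\mu_1)+I_K(\mu_2))$ for all probability measures. So I would simply chain the equivalences: conditional positive definiteness $\iff$ the AM inequality holds on $\mathbb P(\Omega)$ (by Lemma~\ref{lem:BHS2inputAInequality}, equivalence of \eqref{cpd1} and \eqref{cpd2}) $\iff$ $I_K$ is convex on $\mathbb P(\Omega)$ (by Lemma~\ref{lem:ConvexEqual2}).

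For the strict versions, the same chaining works provided I track the qualifier ``unless $\mu_1=\mu_2$'' through both lemmas. Lemma~\ref{lem:BHS2inputAInequality} states that $K$ is conditionally strictly positive definite iff the AM inequality is strict unless $\mu_1=\mu_2$; Lemma~\ref{lem:ConvexEqual2} states that $I_K$ is strictly convex iff that same strict AM inequality holds. Composing these gives the strict case. Finally, the parenthetical ``or, equivalently, on $\widetilde{\mathbb P}(\Omega)$'' is covered by Corollary~\ref{c:convexequiv} (and, on the mixed-energy side, by parts \eqref{cpd2}--\eqref{cpd3} of Lemma~\ref{lem:BHS2inputAInequality}, which say the AM inequality on $\mathbb P(\Omega)$ and on $\widetilde{\mathbb P}(\Omega)$ are equivalent).

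I do not expect any genuine obstacle here; the proposition is explicitly flagged in the text as an immediate consequence (``Lemmas \ref{lem:BHS2inputAInequality} and \ref{lem:ConvexEqual2} together clearly imply''). The only thing requiring minor care is making sure the ``strict'' clauses of the two lemmas are phrased compatibly so the composition is clean, and citing Corollary~\ref{c:convexequiv} for the $\widetilde{\mathbb P}(\Omega)$ assertion rather than re-deriving it. The proof should therefore be two or three sentences long: invoke Lemma~\ref{lem:ConvexEqual2} to replace convexity by the AM inequality, invoke Lemma~\ref{lem:BHS2inputAInequality} to replace the AM inequality by conditional positive definiteness, handle the strict case identically, and note that the equivalence with $\widetilde{\mathbb P}(\Omega)$ is Corollary~\ref{c:convexequiv}.
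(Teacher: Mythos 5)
Your proposal is correct and is exactly the paper's argument: the proposition is derived by chaining Lemma~\ref{lem:BHS2inputAInequality} and Lemma~\ref{lem:ConvexEqual2} through the common arithmetic-mean inequality, with the strict case tracked via the ``unless $\mu_1=\mu_2$'' qualifier and the $\widetilde{\mathbb P}(\Omega)$ statement handled by Corollary~\ref{c:convexequiv}. No gaps.
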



This equivalence between  convexity of $I_K$ on $\mathbb{P}(\Omega)$ and conditional  positive definiteness of $K$ has  been observed, e.g.  in  \cite{BFGMPV,CSh,P,PZ}.  In fact,  it admits a short direct proof (see Proposition \ref{p:mircea} and the discussion thereafter). We chose to prove Lemma \ref{lem:ConvexEqual2} first (an approach taken in a recent paper of the authors, joint with  A.~Glazyrin, D.~Ferizovi\'c, and J.~Park \cite{BFGMPV}), since it also allows us to establish equivalences between local versions of  properties, which will be important later, see e.g. parts  \eqref{P7} and \eqref{P9} of Theorem \ref{thm:CPDEquivalences}. 

\begin{corollary}\label{cor:CSPDUniqueMin}
Suppose that $K$ is conditionally strictly positive definite. Then $I_K$ has a unique minimizer (either in ${\mathbb{P}}(\Omega)$ or in $\widetilde{\mathbb{P}}(\Omega)$).
\end{corollary}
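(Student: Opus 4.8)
The plan is to exploit the strict convexity of $I_K$ that is furnished by Proposition \ref{prop:ConvexCPDEqual2} together with the standard compactness-plus-lower-semicontinuity argument that guarantees a minimizer exists in the first place. First I would recall that since $K$ is continuous on the compact space $\Omega\times\Omega$, the functional $\mu\mapsto I_K(\mu)$ is continuous (indeed weak$^*$ continuous) on $\mathbb P(\Omega)$, and $\mathbb P(\Omega)$ is weak$^*$ compact; hence a minimizer exists. (For the case of $\widetilde{\mathbb P}(\Omega)$ one first needs to know that the infimum over signed measures of mass one is actually attained --- this follows, e.g., from the results referenced in the later sections, or one may simply assert existence as part of the hypothesis; in any case existence is not the point of this corollary.)

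The core of the proof is uniqueness. Suppose $\mu_1$ and $\mu_2$ are both minimizers of $I_K$ over $\mathbb P(\Omega)$ (respectively $\widetilde{\mathbb P}(\Omega)$), with common minimal value $m = I_K(\mu_1) = I_K(\mu_2)$. If $\mu_1 \neq \mu_2$, then by the strict convexity statement of Proposition \ref{prop:ConvexCPDEqual2} applied with $t = \tfrac12$ we get
\[
I_K\Big(\tfrac12\mu_1 + \tfrac12\mu_2\Big) < \tfrac12 I_K(\mu_1) + \tfrac12 I_K(\mu_2) = m.
\]
Since $\tfrac12\mu_1 + \tfrac12\mu_2$ again lies in $\mathbb P(\Omega)$ (respectively $\widetilde{\mathbb P}(\Omega)$, as this class is also convex), this contradicts the minimality of $m$. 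Therefore $\mu_1 = \mu_2$.

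I would present the two cases ($\mathbb P(\Omega)$ and $\widetilde{\mathbb P}(\Omega)$) in parallel, noting that the only facts used are: (i) both classes are convex, so midpoints stay in the class; (ii) strict convexity of $I_K$, which by Proposition \ref{prop:ConvexCPDEqual2} holds on both classes once $K$ is conditionally strictly positive definite. I do not anticipate a genuine obstacle here --- the argument is the textbook ``strictly convex functionals have at most one minimizer'' principle --- the only point requiring a word of care is the existence of a minimizer, especially over $\widetilde{\mathbb P}(\Omega)$, where one should either invoke the relevant existence result established elsewhere in the paper or observe that it is immaterial to the uniqueness claim (the parenthetical ``either in $\mathbb P(\Omega)$ or in $\widetilde{\mathbb P}(\Omega)$'' in the statement can be read as: \emph{if} a minimizer exists, it is unique). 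I would phrase the final write-up so that this subtlety is acknowledged in one sentence rather than belabored.
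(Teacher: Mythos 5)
Your proposal is correct and follows essentially the same route as the paper: strict conditional positive definiteness gives strict convexity of $I_K$ (via Proposition \ref{prop:ConvexCPDEqual2}, i.e.\ Lemmas \ref{lem:BHS2inputAInequality} and \ref{lem:ConvexEqual2}), and a strictly convex functional on a convex set of measures has at most one minimizer. Your explicit midpoint argument and the remark that existence (particularly over $\widetilde{\mathbb P}(\Omega)$, which is not weak$^*$ compact) is a separate matter are both consistent with the paper's intent, which only addresses uniqueness.
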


\begin{proof}
   According to Lemmas~\ref{lem:BHS2inputAInequality} and \ref{lem:ConvexEqual2}, strict conditional positive definiteness of $K$  implies  strict convexity of $ I_K $ on both ${\mathbb{P}}(\Omega)$ and $\widetilde{\mathbb{P}}(\Omega)$. 
   To conclude, observe that these measure spaces are themselves convex.
\end{proof}

Finally, we note that  convexity of $ I_K $  
on the entire  space $ \mathcal M(\Omega) $ is equivalent to positive definiteness of the kernel.  More precisely, we have the following.
\begin{proposition}[{\cite[Lemma 4]{P}}]\label{p:mircea}
    The functional $ I_K $ is (strictly) convex on $ \mathcal M(\Omega) $ if and only if $ K $ is  (strictly) positive definite on $ \Omega $.
\end{proposition}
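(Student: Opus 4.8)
The plan is to parallel the proof of Proposition~\ref{prop:ConvexCPDEqual2}, but working over the full space $\mathcal M(\Omega)$ rather than over $\mathbb P(\Omega)$ or $\widetilde{\mathbb P}(\Omega)$; the reference to \cite[Lemma 4]{P} suggests a short and direct argument. First I would reduce convexity to a second-difference (or discriminant) computation: for $\mu,\nu\in\mathcal M(\Omega)$ and $t\in[0,1]$, the function $f(t)=I_K((1-t)\mu+t\nu)$ is the quadratic $f(t)=(1-t)^2 I_K(\mu)+2t(1-t)I_K(\mu,\nu)+t^2 I_K(\nu)$, whose second derivative is the constant $f''=2\big(I_K(\mu)-2I_K(\mu,\nu)+I_K(\nu)\big)=2\,I_K(\mu-\nu)$. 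Hence convexity of $I_K$ on $\mathcal M(\Omega)$ is equivalent to $I_K(\mu-\nu)\ge 0$ for all $\mu,\nu\in\mathcal M(\Omega)$.

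The key observation is then that as $\mu,\nu$ range over all of $\mathcal M(\Omega)$, the difference $\mu-\nu$ ranges over all of $\mathcal M(\Omega)$: given any $\rho\in\mathcal M(\Omega)$, write $\rho=\rho^+-\rho^-$ via its Jordan decomposition, so $\rho=\mu-\nu$ with $\mu=\rho^+$, $\nu=\rho^-$ both in $\mathcal M(\Omega)$ (one may also simply take $\mu=\rho+\eta$, $\nu=\eta$ for any fixed $\eta$, e.g.\ a probability measure, to stay in any affine class one wishes, but the Jordan decomposition is cleanest here). Therefore the displayed equivalence becomes ``$I_K(\rho)\ge 0$ for all $\rho\in\mathcal M(\Omega)$,'' which is exactly the definition of positive definiteness of $K$ from Definition~\ref{def:2PD}. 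This establishes the non-strict equivalence in both directions.

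For the strict version, I would argue as follows. If $K$ is strictly positive definite, then $f''=2I_K(\mu-\nu)>0$ whenever $\mu\ne\nu$, so $f$ is a strictly convex quadratic on $[0,1]$, giving strict inequality in the convexity bound for $t\in(0,1)$ unless $\mu=\nu$. Conversely, if $I_K$ is strictly convex on $\mathcal M(\Omega)$, then for $\mu\ne\nu$ the strict convexity of $f$ forces $f''>0$, i.e.\ $I_K(\mu-\nu)>0$; since every nonzero $\rho\in\mathcal M(\Omega)$ arises as such a difference with $\mu\ne\nu$, we get $I_K(\rho)>0$ for all $\rho\ne 0$, which is strict positive definiteness.

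I do not anticipate a serious obstacle: the whole argument is the ``polarization/discriminant'' computation already used repeatedly in Section~\ref{sec:PDConv}, combined with the elementary surjectivity of the difference map onto $\mathcal M(\Omega)$ (which works precisely because $\mathcal M(\Omega)$, unlike $\mathbb P(\Omega)$ or $\widetilde{\mathbb P}(\Omega)$, is a linear space closed under this operation — this is exactly why the full space is needed here whereas the mean-zero or mass-one constraints appeared for conditional positive definiteness). The only minor point requiring a word of care is making the convexity-at-a-point-vs-global reduction, but this is handled exactly as in the remark following Definition~\ref{def:convexity}: a quadratic that fails convexity at some interior $t$ has strictly negative (constant) second derivative, hence fails convexity at the endpoint as well.
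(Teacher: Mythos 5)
Your proposal is correct and follows essentially the same route as the paper's own proof: the polarization computation showing that the convexity inequality for $f(t)=I_K((1-t)\mu+t\nu)$ is equivalent to $I_K(\mu-\nu)\ge 0$, combined with the observation that differences of measures exhaust $\mathcal M(\Omega)$ (which the paper leaves implicit in ``from which the statement easily follows''). The strict case is handled identically.
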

We include a simple proof of this proposition for the sake of completeness. 
\begin{proof}
Simply observe that for any $t\in (0,1)$, the inequality  
$$ I _K ((1-t) \mu + t \nu ) = (1-t)^2 I_K (\mu) + 2 t(1-t) I_K (\mu,\nu) + t^2 I_K (\nu) \le (1-t) I_K (\mu)  + t I_K (\nu)$$
is equivalent to 
$$  t(1-t) \big( I_K (\mu ) - 2 I_K(\mu,\nu) + I_K (\nu) \big) = t(1-t) I_K (\mu - \nu) \ge 0, $$ from which the statement easily follows. 
\end{proof}
We note that this argument also gives an alternative proof of Proposition \ref{prop:ConvexCPDEqual2}, since any measure with mass zero can be represented as a multiple of the difference of two measures with mass one.

\subsection{Minimizing Measures: Basic Potential Theory}\label{sec:MinMeas}
It is well known that the behavior of the minimizing measures is closely connected to the behavior of the  potential of the minimizing measure  with respect to the kernel. For a detailed account of the topic, we refer the reader to Chapter 4 of  \cite{BHS}. The following simple  statement is classical, see, e.g., \cite{Bj}. We provide its proof for completeness.

\begin{theorem}\label{thm:Constant on Supp}
Suppose that $\mu$ is a minimizer of $I_K$ over $\mathbb{P}(\Omega)$. Then $U_K^{\mu}(x) = I_K(\mu)$ on $\operatorname{supp}(\mu)$ and $U_K^{\mu}(x) \geq I_K(\mu)$ on $\Omega$.
\end{theorem}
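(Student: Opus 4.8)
The plan is to use the standard variational (first-variation) argument. Suppose $\mu$ minimizes $I_K$ over $\mathbb{P}(\Omega)$. First I would record the easy averaging fact: since $\mu \in \mathbb{P}(\Omega)$ and $U_K^\mu(x) = \int_\Omega K(x,y)\,d\mu(y)$, integrating the potential against $\mu$ gives $\int_\Omega U_K^\mu(x)\,d\mu(x) = I_K(\mu)$. Hence, to prove $U_K^\mu \equiv I_K(\mu)$ on $\operatorname{supp}(\mu)$, it suffices to show the pointwise lower bound $U_K^\mu(x) \geq I_K(\mu)$ on all of $\Omega$: once that global inequality is in hand, the set where $U_K^\mu > I_K(\mu)$ must be $\mu$-null (otherwise the average would exceed $I_K(\mu)$), and since $U_K^\mu$ is continuous (as $K$ is continuous and $\Omega$ compact) while $\operatorname{supp}(\mu)$ is the smallest closed full-measure set, the inequality $U_K^\mu(x) \leq I_K(\mu)$ propagates to every point of $\operatorname{supp}(\mu)$.

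Next I would establish the global lower bound. Fix an arbitrary point $x_0 \in \Omega$ and consider the perturbed probability measure $\mu_t = (1-t)\mu + t\delta_{x_0}$ for $t \in [0,1]$, which lies in $\mathbb{P}(\Omega)$. Expanding the energy bilinearly,
\begin{equation*}
I_K(\mu_t) = (1-t)^2 I_K(\mu) + 2t(1-t) I_K(\mu, \delta_{x_0}) + t^2 I_K(\delta_{x_0}, \delta_{x_0}),
\end{equation*}
and noting $I_K(\mu, \delta_{x_0}) = U_K^\mu(x_0)$ and $I_K(\delta_{x_0},\delta_{x_0}) = K(x_0,x_0)$, minimality $I_K(\mu_t) \geq I_K(\mu)$ for all small $t>0$ forces the right derivative at $t=0$ to be nonnegative:
\begin{equation*}
\frac{d}{dt}\Big|_{t=0^+} I_K(\mu_t) = -2 I_K(\mu) + 2 U_K^\mu(x_0) \geq 0,
\end{equation*}
i.e. $U_K^\mu(x_0) \geq I_K(\mu)$. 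Since $x_0$ was arbitrary, this gives $U_K^\mu \geq I_K(\mu)$ on $\Omega$, completing the argument when combined with the averaging step above.

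I do not expect any serious obstacle here; the only points requiring mild care are: (i) justifying that $U_K^\mu$ is continuous and bounded so the ``average equals $I_K(\mu)$, hence equality $\mu$-a.e., hence equality on $\operatorname{supp}(\mu)$'' chain is rigorous — this follows from continuity of $K$ and compactness of $\Omega$; and (ii) making sure the perturbation $\mu_t$ stays admissible, which is immediate since convex combinations of probability measures are probability measures. One could alternatively phrase the first-variation step using the already-established convexity/mixed-energy machinery, but the direct computation with $\delta_{x_0}$ is cleanest and self-contained, so that is the route I would take.
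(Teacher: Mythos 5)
Your proof is correct, but it routes the argument differently from the paper. Both are first-variation arguments, yet the perturbations and the logical order differ. The paper first derives the general inequality $I_K(\mu,\nu)\geq 0$ for every direction $\nu\in\mathcal Z(\Omega)$ with $\mu+\varepsilon\nu\geq 0$, and then argues by contradiction with the mass-transport perturbation $\nu=\mu(B_z)\,\delta_y-\mu\restr_{B_z}$, moving the mass of a small ball around a support point $z$ to a point $y$ where the potential would be smaller; this yields $U_K^\mu(y)\geq U_K^\mu(z)$ for all $y\in\Omega$ and $z\in\operatorname{supp}(\mu)$, from which both conclusions drop out at once. You instead use the simplest admissible direction $\delta_{x_0}-\mu$ to obtain the global bound $U_K^\mu\geq I_K(\mu)$ on $\Omega$ directly, and then close the equality on $\operatorname{supp}(\mu)$ via the averaging identity $\int_\Omega U_K^\mu\,d\mu=I_K(\mu)$ together with continuity of the potential and the characterization of the support as the smallest closed set of full measure. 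Your version is the classical Frostman-style argument and is arguably cleaner in the present setting, since continuity of $K$ on the compact $\Omega$ makes $U_K^\mu$ continuous and the ``equality $\mu$-a.e.\ implies equality on the support'' step rigorous. The paper's ball-based perturbation is slightly more robust: it localizes around support points without leaning on continuity of the potential (only on being able to bound it from below on a small ball), which is the form of the argument that survives for merely lower semicontinuous kernels; it also isolates the reusable inequality $I_K(\mu,\nu)\geq 0$, which the paper exploits again when extending the statement to directional local minimizers in Corollary \ref{cor:Constant on Supp}. Note that your perturbation $(1-t)\mu+t\delta_{x_0}$ is itself directional, so your proof extends to that corollary just as well.
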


\begin{proof} 
Let $\nu \in \mathcal{Z}(\Omega)$ be such that $\mu + \varepsilon \nu \geq 0$ for all  $0 < \varepsilon \leq \epsilon_0 $ with some positive $ \epsilon_0 $. This clearly means that $\mu + \varepsilon \nu \in \mathbb{P}(\Omega)$, so
\begin{equation}
    \label{eq:muisminimum}
I_K(\mu) \leq I_K(\mu + \varepsilon \nu)  = I_K(\mu) + 2 \varepsilon I_K(\mu, \nu) + \varepsilon^2 I_K(\nu).
\end{equation}
Thus, for $0 \leq \varepsilon \leq \epsilon_0 $,
$$ 0 \leq \varepsilon \left( 2  I_K(\mu, \nu) + \varepsilon I_K(\nu) \right),$$
implying $I_K( \mu, \nu) \geq 0$.

Suppose that there exist $c_1,c_2 \in \mathbb{R}$, $z \in \operatorname{supp}(\mu)$ and $y \in \Omega$ such that
$$ c_1 = U_K^{\mu}(y) < U_K^{\mu}(z) = c_2.$$
Let $B_z$ be a ball centered at $z$, so small that $ \min_{x \in B_z} U_K^{\mu}(x) > (c_1+c_2)/2 $.
Define 
\begin{equation}\label{eq:2nu1}
 \nu := \mu(B_z)\cdot \delta_{y} - \mu\restr_{B_z}.
 \end{equation}
 Then $ \nu \in \mathcal Z(\Omega) $ and satisfies $\mu + \varepsilon \nu \geq 0$ for any $ 0\leq \epsilon \leq 1 $. On the other hand, 
\begin{equation*}
    I_K( \mu, \nu) = \mu(B_z) \cdot U_K^{\mu}(y) - \int\limits_{B_z} U_K^{\mu}(x) \,d\mu(x)  \leq  \mu(B_z)\cdot c_1 - \mu(B_z)\cdot\frac{c_1+c_2}2 < 0,
\end{equation*}
which is a contradiction. Thus, necessarily $ U_K^{\mu}(y) \geq U_K^{\mu}(z) $ for $ y,z $ as above. Since $ y $ can belong to $ \operatorname{supp}(\mu) $ and $  \int_\Omega U_K^{\mu}(x)\, d\mu(x) =I_K(\mu)  $, both claims of the theorem follow.
\end{proof}

Notice that if $\mu$ has full support, i.e.\ $\operatorname{supp} (\mu) = \Omega$, the conclusion of Theorem \ref{thm:Constant on Supp} states that the potential $U_K^\mu (x)$ is constant on $\Omega$. Measures with constant potentials  will be further explored in Section \ref{sec:InvMeasEnergy}.

\begin{definition}\label{def:locmin}
    We shall say that $\mu$ is a \textbf{local minimizer} of $I_K$ in $\mathbb P (\Omega)$ \textbf{with respect to a given metric}  $d (\cdot , \cdot) $ on $\mathcal M (\Omega)$   if it is a local minimizer in  the topology induced by this metric, in other words, if there exists $\epsilon_\mu > 0$, 
    such that  for all $ \nu \in \mathbb P(\Omega) $ satisfying $ d(\mu,\nu) \leq \epsilon_\mu $, we have  
    $$ I_K (  \mu + \nu ) \geq I_K (\mu).$$
    
   We shall say that $\mu$ is a \textbf{directional local minimizer} of $I_K$ in $\mathbb P (\Omega)$ if it is a local minimizer in every direction, i.e., if  for each $\nu \in \mathbb{P} (\Omega)$, there exists $\tau_\nu  \in (0,1]$  such that  for all $t \in [0, \tau_\nu]$ we have 
$$ I_K \big(  (1-t) \mu + t \nu \big) \geq I_K (\mu).$$

\end{definition}

The difference between the two definitions above  is similar to that between the Gateaux and Fr\'{e}chet derivatives. 

Observe that, if $\Omega$ is a compact metric space, 
convergence in total variation implies convergence in  Wasserstein $W_p$ metric for $ 1\leq p < \infty  $. Thus, a  local minimizer in one of $ W_p $ is also a local minimizer with respect to the total variation distance. 

In turn,  for $\mu,\nu \in \mathbb P (\Omega)$, we have $ \big( (1-t) \mu + t \nu \big) - \mu =  t (\nu - \mu) $, and the total variation norm  satisfies $\| t (\nu - \mu) \|_{TV} \le 2t < \varepsilon_\mu$ for $t$ small enough. Therefore, local minimizers in total variation are directional local minimizers (but not vice versa).  This is summarized below:
\begin{equation}\label{eq:LocMinRel}
\begin{pmatrix}
\textup{local minimizer in }\\ {W_p, \,\, 1\le p<\infty}\\
\end{pmatrix}  \Longrightarrow 
\begin{pmatrix}
\textup{local minimizer in }\\ { \textup{  total variation}} \\
\end{pmatrix} 
 \Longrightarrow 
 \begin{pmatrix}
 \textup{directional}\\ \textup{local minimizer }\\ 
\end{pmatrix}.
\end{equation}
In this text, unless explicitly specified otherwise, the words ``local minimizer'' in the assumption of a statement will mean the directional minimizer, as this is the weakest assumption, and thus  corresponding results will also hold for the other types of local minimizers mentioned in \eqref{eq:LocMinRel}.

Analyzing the proof of Theorem \ref{thm:Constant on Supp}, we find that for $\nu$ defined in \eqref{eq:2nu1}, we can write $\mu +  \varepsilon \nu  = (1- \varepsilon) \mu + \varepsilon \widetilde{\nu}$ with $\widetilde{\nu} = \mu + \nu \in \mathbb{P} (\Omega)$. Hence, inequality \eqref{eq:muisminimum} holds with sufficiently small $ \epsilon $  even if  $\mu$ is just a directional local minimizer (and therefore, also if it is a local minimizer in  total variation  or in the Wasserstein distance $W_p$, $1\le p< \infty$), and one arrives at a  contradiction in the same way.


\begin{corollary}\label{cor:Constant on Supp}
The statement of Theorem \ref{thm:Constant on Supp} remains true if we only  assume that $\mu$ is a local (not global) minimizer of $I_K$ (in either of three senses: directional, in total variation, or in Wasserstein $W_p$ metric, $1\le p<\infty$). 
\end{corollary}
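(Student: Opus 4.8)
The plan is to observe that the proof of Theorem~\ref{thm:Constant on Supp} uses global minimality of $\mu$ only through the single inequality \eqref{eq:muisminimum}, and only for competitor measures of the special form $\mu+t\nu$ with $\nu$ as in \eqref{eq:2nu1}; since every such competitor lies on a segment joining $\mu$ to a probability measure, the inequality is already guaranteed by the notion of a directional local minimizer. By the chain of implications \eqref{eq:LocMinRel} it suffices to handle that weakest notion, so I would assume throughout that $\mu$ is a directional local minimizer of $I_K$.

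First I would re-establish the inequality $I_K(\mu,\nu)\ge 0$ for every $\nu\in\mathcal Z(\Omega)$ satisfying $\mu+\nu\ge 0$. Given such a $\nu$, put $\widetilde\nu:=\mu+\nu$; then $\widetilde\nu\ge 0$ and $\widetilde\nu(\Omega)=\mu(\Omega)+\nu(\Omega)=1$, so $\widetilde\nu\in\mathbb P(\Omega)$, and for all $t\in[0,1]$
\[
(1-t)\mu+t\widetilde\nu=\mu+t\nu .
\]
By directional local minimality there is $\tau_{\widetilde\nu}\in(0,1]$ with $I_K(\mu)\le I_K(\mu+t\nu)$ for $t\in[0,\tau_{\widetilde\nu}]$. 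Expanding $I_K(\mu+t\nu)=I_K(\mu)+2t\,I_K(\mu,\nu)+t^2 I_K(\nu)$, dividing by $t>0$, and letting $t\to 0^+$ yields $I_K(\mu,\nu)\ge 0$, precisely as in the proof of Theorem~\ref{thm:Constant on Supp}.

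With this in hand, the rest of the argument of Theorem~\ref{thm:Constant on Supp} goes through verbatim: if there were $z\in\operatorname{supp}(\mu)$ and $y\in\Omega$ with $U_K^\mu(y)<U_K^\mu(z)$, then, choosing the ball $B_z$ as there and $\nu:=\mu(B_z)\,\delta_y-\mu\restr_{B_z}$ (which indeed satisfies $\nu\in\mathcal Z(\Omega)$ and $\mu+\nu\ge 0$), one computes $I_K(\mu,\nu)<0$, contradicting the inequality just proved. Hence $U_K^\mu(y)\ge U_K^\mu(z)$ for all $y\in\Omega$, $z\in\operatorname{supp}(\mu)$, so $U_K^\mu$ is constant on $\operatorname{supp}(\mu)$ and $\ge$ that constant on $\Omega$; integrating against $\mu$ identifies the constant with $I_K(\mu)$. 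I do not anticipate any genuine obstacle here — the only point requiring minor care is that the threshold $\tau_{\widetilde\nu}$ depends on the chosen direction, but this is immaterial since the contradiction is reached by letting $t\to 0^+$ for one fixed $\nu$.
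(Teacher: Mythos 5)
Your proposal is correct and follows essentially the same route as the paper: the paper likewise observes that $\mu+\varepsilon\nu=(1-\varepsilon)\mu+\varepsilon\widetilde\nu$ with $\widetilde\nu=\mu+\nu\in\mathbb P(\Omega)$, so that inequality \eqref{eq:muisminimum} already holds for a directional local minimizer, and then reuses the contradiction argument of Theorem~\ref{thm:Constant on Supp} together with the chain \eqref{eq:LocMinRel} to cover the other two notions of local minimizer. No gaps.
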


As we shall see in Theorem \ref{thm:CPDPotConstMin}, under some additional conditions, in particular, if $K$ is conditionally positive definite, the statement of Theorem \ref{thm:Constant on Supp} can be reversed.

\subsection{Positive Definiteness and   Hilbert--Schmidt Operators.}\label{sec:HS}

Let $\mu$ be a Borel probability measure on $\Omega$ and let $K$ be a continuous function on $\Omega \times \Omega$.  We shall consider the operator $T_{K,\mu}$ associated to $K$ on the space of real-valued functions on $\Omega$ that are square-integrable with respect to $\mu$, $L^2 (\Omega, \mu)$. This is a linear integral operator with kernel $K$ defined by 
\begin{equation}
T_{K,\mu} \psi(x) =  \int\limits_{\Omega}  K (x,y) \psi(y)\, d\mu (y). 
\end{equation}

\begin{lemma}\label{lem:Hilbert-Schmidt}
Let $\widetilde{\Omega} = \operatorname{supp}(\mu)$. The operator $T_{K, \mu}$ is self-adjoint and Hilbert--Schmidt, and the eigenfunctions of $T_{K, \mu}$ corresponding to non-zero eigenvalues are continuous on $\widetilde{\Omega}$. The kernel $K$ is positive definite on $\widetilde{\Omega}$ if and only if $T_{K, \mu}$ is a positive operator on $L^2(\Omega, \mu)$.  
\end{lemma}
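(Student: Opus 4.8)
The plan is to establish the three assertions in turn, since they are largely independent. First, \emph{self-adjointness} is immediate from the symmetry $K(x,y)=K(y,x)$: for $\psi,\phi\in L^2(\Omega,\mu)$, Fubini's theorem (justified because $K$ is continuous, hence bounded on the compact set $\widetilde\Omega\times\widetilde\Omega$, and $\mu$ is finite) gives $\langle T_{K,\mu}\psi,\phi\rangle=\langle\psi,T_{K,\mu}\phi\rangle$. The \emph{Hilbert--Schmidt} property follows from $K\in C(\widetilde\Omega\times\widetilde\Omega)\subset L^2(\widetilde\Omega\times\widetilde\Omega,\mu\otimes\mu)$; one recalls the standard fact that an integral operator with an $L^2$ kernel is Hilbert--Schmidt, with HS norm equal to $\|K\|_{L^2(\mu\otimes\mu)}$. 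In particular $T_{K,\mu}$ is compact.

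Next I would treat the \emph{continuity of eigenfunctions}. Suppose $T_{K,\mu}\psi=\lambda\psi$ with $\lambda\neq 0$. Then $\psi=\lambda^{-1}T_{K,\mu}\psi$, so it suffices to show that $T_{K,\mu}$ maps $L^2(\Omega,\mu)$ into $C(\widetilde\Omega)$. For $x,x'\in\widetilde\Omega$,
\begin{equation*}
|T_{K,\mu}\psi(x)-T_{K,\mu}\psi(x')|\le\int_\Omega|K(x,y)-K(x',y)|\,|\psi(y)|\,d\mu(y)\le\|\psi\|_{L^2(\mu)}\Big(\int_\Omega|K(x,y)-K(x',y)|^2\,d\mu(y)\Big)^{1/2},
\end{equation*}
and the last factor tends to $0$ as $x'\to x$ by uniform continuity of $K$ on the compact set $\widetilde\Omega\times\widetilde\Omega$ together with the dominated convergence theorem (or simply the uniform bound $|K(x,y)-K(x',y)|\le\omega_K(d(x,x'))$ with $\omega_K$ the modulus of continuity). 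Hence $T_{K,\mu}\psi$, and therefore $\psi$, is continuous on $\widetilde\Omega$.

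Finally, the \emph{equivalence of positive definiteness and positivity of $T_{K,\mu}$}. For the direction ``positive definite $\Rightarrow$ positive operator'': given $\psi\in L^2(\Omega,\mu)$, the signed measure $d\nu=\psi\,d\mu$ restricted to $\widetilde\Omega$ lies in $\mathcal M(\widetilde\Omega)$, and $\langle T_{K,\mu}\psi,\psi\rangle=\iint K(x,y)\psi(x)\psi(y)\,d\mu(x)d\mu(y)=I_K(\nu)\ge 0$ by positive definiteness of $K$ on $\widetilde\Omega$. For the converse, ``positive operator $\Rightarrow$ positive definite on $\widetilde\Omega$'': here the obstacle is that an arbitrary $\nu\in\mathcal M(\widetilde\Omega)$ need not be absolutely continuous with respect to $\mu$, so one cannot directly write $\nu=\psi\,d\mu$. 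I would circumvent this by approximation: since $\mu$ has support $\widetilde\Omega$, discrete measures supported on finitely many points of $\widetilde\Omega$ are weak$^*$ dense in $\mathcal M(\widetilde\Omega)$, so by continuity of $K$ it suffices to show $\sum_{i,j}c_ic_jK(z_i,z_j)\ge0$ for $z_i\in\widetilde\Omega$, $c_i\in\mathbb R$. Fixing such points and scalars, one chooses, for small $r>0$, disjoint balls $B_i=B(z_i,r)$ with $\mu(B_i)>0$ (possible since $z_i\in\operatorname{supp}\mu$) and sets $\psi_r=\sum_i \frac{c_i}{\mu(B_i)}\mathbbm 1_{B_i}\in L^2(\Omega,\mu)$. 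Then
\begin{equation*}
0\le\langle T_{K,\mu}\psi_r,\psi_r\rangle=\sum_{i,j}\frac{c_ic_j}{\mu(B_i)\mu(B_j)}\int_{B_i}\int_{B_j}K(x,y)\,d\mu(x)d\mu(y)\xrightarrow[r\to0]{}\sum_{i,j}c_ic_jK(z_i,z_j),
\end{equation*}
the limit again by uniform continuity of $K$. This yields positive definiteness of $K$ on $\widetilde\Omega$. I expect this last approximation step — reducing a general measure to the absolutely continuous (indeed simple-function) case using fullness of $\operatorname{supp}\mu$ — to be the only genuinely delicate point; the rest is standard Hilbert--Schmidt theory.
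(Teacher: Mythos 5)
Your proposal is correct and follows essentially the same route as the paper: self-adjointness from symmetry, the Hilbert--Schmidt property from $K\in L^2(\mu\otimes\mu)$, continuity of eigenfunctions from the eigenvalue equation, and the converse direction by approximating point masses $\delta_{z_i}$ (for $z_i\in\operatorname{supp}\mu$) with normalized indicators $\mu(B(z_i,r))^{-1}\mathbbm 1_{B(z_i,r)}$ and using weak$^*$ density of discrete measures together with continuity of $K$. The only differences are presentational (you spell out the modulus-of-continuity estimate for eigenfunctions and phrase the density step directly in terms of discrete measures rather than measures with bounded density), so no further comment is needed.
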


\begin{proof}
Self-adjointedness immediately follows from the fact that $K(x,y)$ is symmetric. Since $\Omega$ is compact (and hence $\widetilde{\Omega}$ is also) and $K$ continuous, we know that
$$\int\limits_{\Omega} \int\limits_{\Omega} |K(x,y)|^2\, d \mu(x) d \mu(y) < \infty,$$
which implies that $T_{K,\mu}$ is Hilbert--Schmidt.

Now, suppose that $T_{K, \mu} \phi = \lambda \phi$ for $\lambda \neq 0$. Then the representation
$$ \phi(x) = \frac{1}{\lambda} \int\limits_{\Omega} K(x,y) \phi(y)\, d \mu(y)$$
implies that $\phi$ is continuous on $\widetilde{\Omega}$. 

We now show that the positive definiteness of $K$ and the positivity of $T_{K, \mu}$ are equivalent. If $K$ is positive definite on $\widetilde{\Omega}$, then for any $\psi \in L^2(\Omega, \mu) \subset L^1 (\Omega,\mu)$,
$$ \langle \psi, T_{K, \mu} \psi \rangle_{L^2( \Omega, \mu)} = \int\limits_{\Omega} \int\limits_{\Omega} K(x,y) \psi(x) \psi(y)\, d \mu(x) d \mu(y) = I_K( \psi( \cdot) \mu) \geq 0,$$
so $T_{K, \mu}$ is indeed positive.

Assume instead that $T_{K,\mu}$ is positive. Observe that measures, which are absolutely continuous with respect to $\mu$ and have bounded density, i.e. measures of the form $d\nu = f \, d\mu$, where $f$ is a bounded Borel measurable function on $\widetilde{\Omega}$, are weak$^*$ dense  in $\mathcal M (\widetilde{\Omega})$. To show this, notice that for each ball $B(z,r)$ of radius $r>0$ centered at the point  $z\in \widetilde{\Omega}$, we have $\mu (B(z,r)) \neq 0$, and therefore,   the functions $f_r (x) =  \frac{1}{\mu (B(z,r))} {\mathbbm{1}}_{B(z,r)} (x)$ are well-defined and bounded. Obviously, the measures $\nu_r$ defined by $d\nu_r = f_r\, d\mu$  converge weak$^*$ to $\delta_z$ as $r\rightarrow 0$, which   suffices due to weak$^*$ density of discrete measures. 

Then for all such measures of the form $d\nu  = f \, d\mu$, since bounded functions are in $L^2(\Omega,\mu)$, we have  
$$ I_K (\nu) = \langle T_{K,\mu} f,  f \rangle \ge 0, $$
and by weak$^*$ density, it follows that $K$ is  positive definite on $\widetilde{\Omega}$. 
\end{proof}

Moreover, since $T_{K, \mu}$ is a Hilbert-Schmidt operator, it is in fact a compact operator. Hence, we may apply the Spectral Theorem
to establish that  there exists an orthonormal basis $\{ \phi_j\}_{j=1}^{\dim(L^2(\Omega, \mu))}$ of $L^2 (\Omega, \mu)$ consisting of eigenfunctions of $T_{K, \mu}$, i.e. $T_{K, \mu} \phi_j = \lambda_j \phi_j$, where the sequence of eigenvalues satisfies $|\lambda_j| \geq |\lambda_{j+1}|$ and $\lim_{j \rightarrow \infty} \lambda_j = 0$ if $\dim( L^2(\Omega, \mu)) = \infty$. Moreover, the Spectral Theorem tells us that, for any continuous function $K$, in the $L^2$ sense,
\begin{equation}\label{eq:Mercer0}
K(x,y) = \sum_{j=1}^{\dim(L^2(\Omega, \mu))} \lambda_j \phi_j(x) \phi_j(y).
\end{equation}
When $K$ is positive definite, $T_{K,\mu}$ is positive, i.e. $\lambda_j \ge 0 $ for all $j\ge 1$.    Mercer's Theorem then strengthens the information about convergence in \eqref{eq:Mercer0}.  We include its proof for completeness. 

\begin{theorem}[Mercer's Theorem]\label{thm:Mercer}
Assume that the kernel $K$  on $\Omega \times \Omega$ is positive definite. Fix a measure $\mu\in \mathbb P(\Omega)$ with  $\operatorname{supp}(\mu) =  \Omega$.   Let $\lambda_j \ge 0$ be the eigenvalues and $\phi_j$ be the eigenfunctions of the associated Hilbert--Schmidt operator $T_{K,\mu}$.  Then
\begin{equation}\label{eq:Mercer}
K(x,y) = \sum_{j=1}^{\dim( L^2( \Omega, \mu))} \lambda_j  \phi_j (x) \phi_j (y),
\end{equation}
where the series converges absolutely and uniformly. 
\end{theorem}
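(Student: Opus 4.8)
The plan is to prove Mercer's theorem in the standard way, using the spectral decomposition of the Hilbert--Schmidt operator $T_{K,\mu}$ together with Dini's theorem to promote pointwise monotone convergence to uniform convergence. First I would record what we already have: by Lemma~\ref{lem:Hilbert-Schmidt} and the spectral theorem, $T_{K,\mu}$ is self-adjoint, compact, and positive (since $K$ is positive definite), so its eigenvalues satisfy $\lambda_j \ge 0$, the eigenfunctions $\{\phi_j\}$ form an orthonormal basis of $L^2(\Omega,\mu)$, those $\phi_j$ with $\lambda_j > 0$ are continuous on $\Omega = \operatorname{supp}(\mu)$, and the expansion \eqref{eq:Mercer0} holds in the $L^2(\Omega\times\Omega, \mu\times\mu)$ sense.

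The key step is to show that, for each fixed $x \in \Omega$, the partial sums $K_n(x,x) := \sum_{j=1}^n \lambda_j \phi_j(x)^2$ converge to $K(x,x)$. For this I would observe that the remainder kernel $R_n(x,y) := K(x,y) - \sum_{j=1}^n \lambda_j \phi_j(x)\phi_j(y)$ is again a continuous positive definite kernel: it is the kernel of the operator $T_{K,\mu}$ restricted to the orthogonal complement of $\operatorname{span}\{\phi_1,\dots,\phi_n\}$, which is still positive. Applying the defining inequality of positive definiteness to the (signed) measure $\delta_x$ — or more carefully, approximating $\delta_x$ weak$^*$ by the normalized indicators $f_r\,d\mu$ as in the proof of Lemma~\ref{lem:Hilbert-Schmidt} and passing to the limit using continuity of $R_n$ — gives $R_n(x,x) \ge 0$, i.e. $\sum_{j=1}^n \lambda_j \phi_j(x)^2 \le K(x,x)$ for every $n$ and every $x$. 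Hence the series $\sum_j \lambda_j \phi_j(x)^2$ converges pointwise to some limit $s(x) \le K(x,x)$, and a short argument (using that the series converges to $K(x,\cdot)$ in $L^2(\mu)$ and that $\int s\,d\mu = \sum_j \lambda_j = \int K(x,x)\,d\mu(x)$ by the trace identity for Hilbert--Schmidt operators, combined with $s \le K(x,x)$) forces $s(x) = K(x,x)$ for $\mu$-a.e.\ $x$, and then for all $x$ by continuity of both sides (the partial sums are continuous and increasing, so $s$ is lower semicontinuous, while $K(x,x)$ is continuous; equality a.e.\ plus these regularity properties gives equality everywhere on $\operatorname{supp}\mu = \Omega$). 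With the diagonal convergence in hand, Dini's theorem applies: on the compact space $\Omega$, the continuous partial sums $K_n(x,x)$ increase pointwise to the continuous function $K(x,x)$, so the convergence is uniform; thus $\sup_x \sum_{j>n}\lambda_j\phi_j(x)^2 \to 0$.

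Finally I would upgrade this to absolute and uniform convergence of the double series via Cauchy--Schwarz: for $m < n$,
\[
\Bigl|\sum_{j=m}^{n} \lambda_j \phi_j(x)\phi_j(y)\Bigr|
\le \Bigl(\sum_{j=m}^n \lambda_j \phi_j(x)^2\Bigr)^{1/2}\Bigl(\sum_{j=m}^n \lambda_j \phi_j(y)^2\Bigr)^{1/2}
\le K(x,x)^{1/2}\Bigl(\sum_{j\ge m}\lambda_j\phi_j(y)^2\Bigr)^{1/2},
\]
and the right-hand side is bounded by $(\sup_\Omega K(x,x))^{1/2}$ times a quantity that tends to $0$ uniformly in $y$ by the Dini step. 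This shows the partial sums of \eqref{eq:Mercer} form a uniformly Cauchy sequence of continuous functions, hence converge uniformly (and absolutely) to a continuous limit; since they also converge to $K$ in $L^2(\mu\times\mu)$, the uniform limit must be $K(x,y)$ everywhere on $\Omega\times\Omega$. The main obstacle I anticipate is the careful justification of the diagonal identity $\sum_j \lambda_j\phi_j(x)^2 = K(x,x)$ for \emph{every} $x$ (not just a.e.): one must handle the evaluation of a positive definite kernel at a point — legitimate here since $R_n$ is continuous and $\delta_x$ is a weak$^*$ limit of absolutely continuous measures supported near $x$ — and then reconcile the a.e.\ identity with pointwise equality using semicontinuity and the full-support hypothesis. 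Everything else is the routine Cauchy--Schwarz/Dini packaging.
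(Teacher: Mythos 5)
Your overall strategy coincides with the paper's: positivity of the remainder kernel $R_n$ on the diagonal, the bound $\sum_{j\le n}\lambda_j\phi_j(x)^2\le K(x,x)$, Cauchy--Schwarz, and Dini. The gap is in your justification of the diagonal identity $s(x):=\sum_j\lambda_j\phi_j(x)^2=K(x,x)$, which you correctly single out as the crux. First, the ``trace identity'' $\sum_j\lambda_j=\int_\Omega K(x,x)\,d\mu(x)$ is not a generic fact about Hilbert--Schmidt operators (for those one only has $\sum_j\lambda_j^2=\|K\|^2_{L^2(\mu\times\mu)}$); for continuous kernels it is normally \emph{deduced from} Mercer's theorem (exactly as the paper does in \eqref{eq:Mercer Coef Sum}), so invoking it here is circular unless you supply an independent proof. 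Second, even granting $s=K(\cdot,\cdot)$ $\mu$-a.e., your upgrade to every $x$ fails: $s$ is \emph{lower} semicontinuous, so along a sequence $x_n\to x_0$ with $s(x_n)=K(x_n,x_n)$ you only obtain $s(x_0)\le\liminf_n s(x_n)=K(x_0,x_0)$, which is the inequality you already have; to conclude $s(x_0)\ge K(x_0,x_0)$ you would need \emph{upper} semicontinuity of $s$, which is not available. A lower semicontinuous $s\le K$ with $s=K$ a.e.\ can still dip below $K$ at individual points, so the regularity you cite does not force equality everywhere.

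The repair is standard and uses only ingredients you already have, just in the opposite order: do the Cauchy--Schwarz step first. For fixed $x$ the nonnegative series $\sum_j\lambda_j\phi_j(x)^2$ converges (being bounded by $K(x,x)$), so its tails tend to $0$, while $\sum_{j=m}^{n}\lambda_j\phi_j(y)^2\le\sup_{z\in\Omega}K(z,z)$ uniformly in $y$; hence $g_x(y):=\sum_j\lambda_j\phi_j(x)\phi_j(y)$ converges uniformly in $y$ and defines a continuous function. Since $\langle g_x,\phi_j\rangle_{L^2(\mu)}=\lambda_j\phi_j(x)=\int_\Omega K(x,y)\phi_j(y)\,d\mu(y)=\langle K(x,\cdot),\phi_j\rangle_{L^2(\mu)}$ for every $j$ and $\{\phi_j\}$ is an orthonormal basis, $g_x=K(x,\cdot)$ in $L^2(\Omega,\mu)$, hence everywhere because both are continuous and $\operatorname{supp}(\mu)=\Omega$. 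Taking $y=x$ gives $s(x)=K(x,x)$ for every $x$, Dini applies to the increasing continuous partial sums on the diagonal, and the remainder of your argument goes through verbatim.
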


\noindent {\it{Remark:}} Here, as well as in Proposition \ref{prop:Convolution}, we  assume that $\operatorname{supp}(\mu) =  \Omega$. If $\operatorname{supp}(\mu) \neq \Omega$, then the expansion \eqref{eq:Mercer}  holds for  $x,y \in \widetilde{\Omega} = \operatorname{supp}(\mu)$. 

\begin{proof}
As mentioned above, the fact that \eqref{eq:Mercer} holds in the $L^2$ sense follows from the Spectral Theorem for compact operators, hence, only uniform and absolute convergence  in the case $\dim(L^2(\Omega, \mu)) =  \infty$ need to be proven. 

Consider the remainder of the series, i.e. the  continuous function 
$$R_N (x,y) = K (x,y) - \sum_{j=1}^N \lambda_j \phi_j (x) \phi_j (y) .$$
The corresponding Hilbert-Schmidt operator defined by $ T_{R_N, \mu} \psi(x) = \int\limits_{\Omega} R_N(x,y)\, \psi(y)\, d\mu (y)$ is clearly bounded and positive: if $ \psi= \sum_{j=1}^\infty \widehat{\psi}_j \phi_j$, then 
$$ \langle T_{R_N, \mu} \psi, \psi \rangle = \sum_{j=N+1}^\infty  \lambda_j  \big| \widehat{\psi}_j \big|^2 \ge 0. $$

For a positive Hilbert--Schmidt operator, the kernel is non-negative on the diagonal, i.e. $R_N (x,x) \ge 0$ for each $x\in \Omega  $.  Indeed, assume that for some $x \in \Omega$ we have $R_N (x,x) <0$. Then we can choose a neighborhood $U$ of $x$ so that $R_N$ is negative on $U\times U$, so
$$ \langle T_{R_N, \mu} \mathbbm{1}_{U}, \mathbbm{1}_{U} \rangle = \int\limits_U \int\limits_U R_N (x,y)\, d\mu(x) d\mu (y) <0, $$
which is a contradiction.

Thus,  $R_N (x,x) \ge 0$ for each $x\in \Omega$, i.e. for any   $N\ge 1$ we have $\displaystyle{\sum_{j=1}^N \lambda_j \phi_j^2 (x) \le K(x,x)}$, and thus 
$$\displaystyle{\sum_{j=1}^\infty \lambda_j \phi_j^2 (x) \le K(x,x)}.$$
Invoking the Cauchy--Schwarz inequality, we find that 
\begin{align*}
\bigg|  \sum_{j=1}^\infty \lambda_j  \phi_j (x) \phi_j (y) \bigg| & \le \sum_{j=1}^\infty \lambda_j  \big| \phi_j (x) \phi_j (y) \big|  \le \bigg(  \sum_{j=1}^\infty \lambda_j \phi^2_j (x)  \bigg)^{1/2} \bigg(  \sum_{j=1}^\infty \lambda_j \phi^2_j (y)  \bigg)^{1/2}\\
& \le K^{1/2} (x,x) K^{1/2} (y,y) \le \sup_{x\in \Omega} K (x,x) < \infty. 
\end{align*}
Therefore, the series in \eqref{eq:Mercer} converges absolutely and uniformly, by Dini's theorem.
\end{proof}

As a simple corollary of Mercer's Theorem, we find that  
$$ \int\limits_\Omega K(x,x)\, d\mu (x)  = \sum_{j\ge 1}   \lambda_j \int\limits \phi_j^2\, d\mu  = \sum_{j\ge 1}   \lambda_j, $$
and thus 
\begin{equation}\label{eq:Mercer Coef Sum}
\sum_{j=1}^{\dim( L^2( \Omega, \mu))} \lambda_j < \infty.
\end{equation}

By combining Mercer's Theorem with Lemma \ref{lem:PDconstructed} we arrive at a characterization of all positive definite kernels.
\begin{corollary}\label{cor:PDCharacterization}
The kernel $K$ on $\Omega \times \Omega$  is positive definite if and only if for some sequence of   functions $\phi_j: \Omega \rightarrow \mathbb R$ and real numbers $\lambda_j \ge 0$, the kernel $K$ 
$$ K(x,y) = \sum_{j\ge 1}  \lambda_j \phi_j(x) \phi_j(y)$$
where the sum converges absolutely and uniformly. 
\end{corollary}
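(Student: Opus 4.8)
The plan is to derive both implications directly from results already in hand, namely Mercer's Theorem (Theorem~\ref{thm:Mercer}), Lemma~\ref{lem:Hilbert-Schmidt}, and Lemma~\ref{lem:PDconstructed}. For the ``if'' direction there is nothing to do beyond citing Lemma~\ref{lem:PDconstructed}: the hypothesized representation $K(x,y) = \sum_{j\ge 1} \lambda_j \phi_j(x)\phi_j(y)$ with $\lambda_j \ge 0$ and absolutely and uniformly convergent sum is exactly the hypothesis of that lemma, which concludes that $K$ is positive definite.

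For the ``only if'' direction, suppose $K$ is positive definite. To invoke Mercer's Theorem we first need a reference probability measure with full support. Since $\Omega$ is a compact metric space it is separable; choosing a countable dense subset $\{x_n\}_{n\ge 1}\subset\Omega$ and setting $\mu = \sum_{n\ge 1} 2^{-n}\delta_{x_n}$ gives $\mu\in\mathbb P(\Omega)$ with $\operatorname{supp}(\mu)=\overline{\{x_n\}}=\Omega$. Applying Theorem~\ref{thm:Mercer} to this $\mu$ produces eigenfunctions $\phi_j$ of the associated Hilbert--Schmidt operator $T_{K,\mu}$, continuous on $\Omega$ by Lemma~\ref{lem:Hilbert-Schmidt}, with eigenvalues $\lambda_j$, such that $K(x,y)=\sum_{j\ge 1}\lambda_j\phi_j(x)\phi_j(y)$ with absolute and uniform convergence. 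Finally, $\lambda_j\ge 0$: positive definiteness of $K$ forces $T_{K,\mu}$ to be a positive operator by Lemma~\ref{lem:Hilbert-Schmidt}, so all its eigenvalues are non-negative. This yields the desired representation.

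There is no genuine obstacle here; the only step needing a moment's care is the construction of a full-support probability measure used to apply Mercer's Theorem, which is handled by separability of compact metric spaces as above. (Without such a measure one would only get the expansion on $\widetilde\Omega=\operatorname{supp}(\mu)$, per the Remark following Theorem~\ref{thm:Mercer}, but the measure just constructed makes $\widetilde\Omega=\Omega$.)
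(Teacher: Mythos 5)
Your proof is correct and follows essentially the same route as the paper, which obtains the corollary precisely by combining Lemma~\ref{lem:PDconstructed} (for the ``if'' direction) with Mercer's Theorem and the positivity of $T_{K,\mu}$ from Lemma~\ref{lem:Hilbert-Schmidt} (for the ``only if'' direction). Your explicit construction of a full-support probability measure $\mu=\sum_n 2^{-n}\delta_{x_n}$ via separability is a welcome detail that the paper leaves implicit when it fixes such a $\mu$ in Theorem~\ref{thm:Mercer}.
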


\subsection{The Existence of Convolution Square Root}\label{sec:SqRoot}
 
 We now  supply another way to characterize positive definite functions, which is similar to the existence of the ``square root'' for positive semidefinite matrices. A similar statement for the rotation-invariant kernels on the sphere has been obtained in \cites{BD, BDM}. Here we prove a general version of this representation.

\begin{proposition}\label{prop:Convolution} Fix any $\mu\in  \mathbb P (\Omega)$ with $\operatorname{supp}(\mu) =  \Omega$.
A kernel $K$ on $\Omega \times \Omega$  is positive definite if and only if there exists some $k \in L^2( \Omega \times \Omega, \mu \times \mu)$ such that 
for all $x, y \in \Omega$,
\begin{equation}\label{eq:Convolution}
K(x,y) = \int\limits_{\Omega} k(x, z) k(z,y)\, d \mu(z).
\end{equation}
\end{proposition}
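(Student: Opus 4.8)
The plan is to prove both directions by leaning on Mercer's Theorem (Theorem \ref{thm:Mercer}) and its corollary \eqref{eq:Mercer Coef Sum}. The ``if'' direction is the routine one: given a representation \eqref{eq:Convolution} with $k \in L^2(\Omega\times\Omega, \mu\times\mu)$, I would check that for any $\nu \in \mathcal M(\Omega)$ one has
\[
I_K(\nu) = \int_\Omega \Bigl( \int_\Omega k(x,z)\, d\nu(x) \Bigr)^2 d\mu(z) \ge 0,
\]
after justifying the interchange of integrals by Fubini (using $k \in L^2$, finiteness of $\mu$, and finiteness of $|\nu|$). Actually one must be slightly careful that $\int_\Omega k(x,z)\, d\nu(x)$ makes sense for $\mu$-a.e. $z$; this follows since $k(\cdot, z) \in L^2(\mu) \subset L^1(\mu) \subset L^1(|\nu|)$ for a.e.\ $z$ by Fubini, but here one really wants $\nu$ absolutely continuous or to approximate — it is cleanest to note $K$ is continuous hence the identity \eqref{eq:Convolution} combined with weak$^*$ density reduces positive definiteness to discrete $\nu$, for which the computation is a finite sum and trivially nonnegative.

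For the ``only if'' direction, suppose $K$ is positive definite. Apply Mercer's Theorem with the given $\mu$ (which has full support) to get $K(x,y) = \sum_{j\ge 1} \lambda_j \phi_j(x)\phi_j(y)$ with $\lambda_j \ge 0$, convergence absolute and uniform, and crucially $\sum_j \lambda_j < \infty$ by \eqref{eq:Mercer Coef Sum}, with $\{\phi_j\}$ orthonormal in $L^2(\Omega,\mu)$. The natural candidate is
\[
k(x,z) = \sum_{j\ge 1} \sqrt{\lambda_j}\, \phi_j(x)\phi_j(z).
\]
I would first verify $k \in L^2(\Omega\times\Omega, \mu\times\mu)$: by orthonormality of $\{\phi_i \otimes \phi_j\}$ in $L^2(\mu\times\mu)$, the $L^2$-norm squared of the partial sums is $\sum_j \lambda_j$, which is finite, so the series converges in $L^2(\mu\times\mu)$ to some $k$ with $\|k\|_2^2 = \sum_j \lambda_j$. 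Then I would compute the convolution: for fixed $x,y$,
\[
\int_\Omega k(x,z) k(z,y)\, d\mu(z) = \sum_{i,j} \sqrt{\lambda_i \lambda_j}\, \phi_i(x)\phi_j(y) \int_\Omega \phi_i(z)\phi_j(z)\, d\mu(z) = \sum_j \lambda_j \phi_j(x)\phi_j(y) = K(x,y),
\]
using orthonormality to collapse the double sum to the diagonal. The interchange of the sum and the integral here needs a word of justification — one can use that the partial sums of $k(x,\cdot)$ converge in $L^2(\mu)$ for each fixed $x$ (their coefficients $\sqrt{\lambda_j}\phi_j(x)$ are square-summable since $\sum_j \lambda_j \phi_j(x)^2 \le K(x,x)$, shown inside the proof of Mercer's Theorem), so the inner product against $k(\cdot,y) \in L^2(\mu)$ passes to the limit.

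The main obstacle I anticipate is handling the measurability/integrability bookkeeping for $k$ cleanly — in particular making sure the partial-sum-in-$L^2(\mu\times\mu)$ object $k$ can be evaluated ``pointwise in $x$'' as an element of $L^2(\mu)$ to carry out the convolution computation. The resolution is exactly the pointwise bound $\sum_j \lambda_j \phi_j(x)^2 \le K(x,x) \le \sup_\Omega K(x,x) < \infty$ established within Mercer's proof, which gives, for every $x$, a genuine element $k(x,\cdot) = \sum_j \sqrt{\lambda_j}\phi_j(x)\phi_j \in L^2(\Omega,\mu)$ (not merely an a.e.\ equivalence class), and these glue together into a jointly measurable $k$ agreeing $\mu\times\mu$-a.e.\ with the $L^2$-limit. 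Everything else is Fubini and orthogonality. I would also remark, as the paper's own Remark after Mercer indicates, that if $\operatorname{supp}(\mu) \ne \Omega$ the identity \eqref{eq:Convolution} holds for $x,y \in \operatorname{supp}(\mu)$, but under the standing hypothesis of the proposition there is nothing extra to say.
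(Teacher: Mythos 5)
Your proposal is correct and follows essentially the same route as the paper: the converse direction is handled by reducing positive definiteness to discrete measures, where the representation \eqref{eq:Convolution} turns the quadratic form into $\int_\Omega \bigl(\sum_j c_j k(z_j,z)\bigr)^2 d\mu(z) \ge 0$, and the forward direction takes $k(x,y)=\sum_j \sqrt{\lambda_j}\,\phi_j(x)\phi_j(y)$, with $k\in L^2(\mu\times\mu)$ via \eqref{eq:Mercer Coef Sum} and the pointwise identity $\|k(x,\cdot)\|_{L^2(\mu)}^2 = K(x,x)$ justifying the inner-product computation. No gaps; this matches the paper's argument.
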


We again  remind the reader  that if $\widetilde\Omega = \operatorname{supp}(\mu)  \subsetneq \Omega$, then the conclusion of Proposition \ref{prop:Convolution} holds on $\widetilde\Omega$. 


\begin{proof} 
Assume that $K$ is positive definite and define the function $k : \Omega \times \Omega \rightarrow \mathbb R$ by setting
\begin{equation}\label{eq:k}
k (x,y) = \sum_{j\ge 1 }  \sqrt{\lambda_j}\, \phi_j (x) \phi_j (y),
\end{equation}
where $\lambda_j \ge 0$ are eigenvalues and $\phi_j$ are eigenfunctions of $T_{K,\mu}$. 
The sequence $\{ \phi_j (x) \phi_j (y) \}$ is orthonormal  in $ L^2 (\Omega\times \Omega, \mu \times \mu)$,  hence  $k \in L^2 (\Omega\times \Omega, \mu \times \mu)$, since $$ \| k \|^2_{ L^2 (\Omega\times \Omega, \mu \times \mu)} =  \sum_{j\ge 1} \lambda_j   <  \infty$$   according to \eqref{eq:Mercer Coef Sum}.

Moreover, for each $x\in \Omega$, we have that $k (x, y) \in L^2 (\Omega, \mu)$ as a function of $y$. Indeed, using Mercer's theorem and applying Plancherel's theorem with respect to $d\mu (y)$ to \eqref{eq:k}, we can compute the $L^2 $ norm
$$ \| k (x, \cdot ) \|_{L^2 (\Omega, \mu)}^2 = \sum_{j\geq1} \big| \sqrt{\lambda_j} \phi_j (x) \big| ^2  = \sum_{j\geq1} \lambda_j \phi^2_j(x) = K(x,x) < \infty. 
$$

Similarly, since $k \in L^2 (\Omega,\mu)$ in each variable,  we have, for any $x,y \in \Omega$,  
$$ \int\limits_{\Omega} k(x, z) k(z,y)\, d \mu(z) = \langle k(x, \cdot), k(y, \cdot) \rangle_{L^2(\Omega, \mu)} = \sum_{j\ge 1} \sqrt{\lambda_j}  \phi_j(x) \sqrt{\lambda_j} \phi_j(y) = K(x,y),$$ due to Mercer's Theorem, which proves \eqref{eq:Convolution}. \\

Alternatively, if we assume that \eqref{eq:Convolution} holds, 
then for any finite point configuration $\omega_N = \{ z_1, ..., z_N\}$ in $\Omega$ and $c_1, ..., c_N \in \mathbb{R}$, we have
\begin{align*}
\sum_{j=1}^{N} \sum_{i = 1}^{N} K(z_j, z_i) c_i c_j & = \sum_{j=1}^{N} \sum_{i = 1}^{N} c_j c_i \int\limits_{\Omega} k(z_j, z) k(z,z_i)\, d \mu(z) \\
& = \int\limits_{\Omega} \Big( \sum_{j=1}^{N} c_j k( z_j, z) \Big)^2 d \mu(z) \geq 0.
\end{align*}
Thus, $K$ is clearly positive definite.
\end{proof}

\noindent {\it{Remark: }} Observe that the choice of the ``square root'' $k$ is not unique. Indeed, instead of \eqref{eq:k} one could take any $$k(x,y) = \sum_{j\ge 1}  \kappa_j \phi_j(x) \phi_j(y) ,$$ with the property  that $\kappa_j^2 = \lambda_j$ for all $j\ge 1$, i.e. $\kappa_j = \pm \sqrt{\lambda_j}$ for an arbitrary choice of signs, yielding uncountably many functions $k$ satisfying \eqref{eq:Convolution}. \\

\subsection{Energy Minimizers and Hilbert--Schmidt Operators}\label{sec:MinHS}

There is a close relation between energy minimizers and the properties of the associated Hilbert--Schmidt operator $T_{K,\mu}$ on $L^2 (\Omega,\mu)$. We have the following statement.

\begin{lemma}\label{lem:OperatorPos}
Let $K$ be a kernel on $\Omega \times \Omega$ and assume that $\mu \in \mathbb{P}(\Omega)$ is  a global or local minimizer of $I_K$ with $I_K(\mu)  \geq 0$. Then the Hilbert--Schmidt operator  $T_{K, \mu}$ is positive.
\end{lemma}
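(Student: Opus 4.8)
The plan is to show directly that $T_{K,\mu}$ is positive, i.e. that $\langle \psi, T_{K,\mu}\psi\rangle_{L^2(\Omega,\mu)}\ge 0$ for every $\psi\in L^2(\Omega,\mu)$. Writing $\psi\mu$ for the measure with $\mu$-density $\psi$, one has $\langle \psi, T_{K,\mu}\psi\rangle = I_K(\psi\mu)$, exactly as in the proof of Lemma~\ref{lem:Hilbert-Schmidt}. Since $T_{K,\mu}$ is bounded (indeed Hilbert--Schmidt, by Lemma~\ref{lem:Hilbert-Schmidt}), the quadratic form $\psi\mapsto \langle\psi, T_{K,\mu}\psi\rangle$ is continuous on $L^2(\Omega,\mu)$, and bounded functions are dense in $L^2(\Omega,\mu)$; hence it suffices to prove $I_K(\psi\mu)\ge 0$ for bounded $\psi$, the general case following by approximation.

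Next I would reduce to the mean-zero case. Set $c=\int_\Omega \psi\,d\mu$ and $\psi_0=\psi-c$, so that $\psi_0$ is bounded with $\int_\Omega \psi_0\,d\mu=0$. Because $\mu$ is a global or (directional) local minimizer of $I_K$, Corollary~\ref{cor:Constant on Supp} gives $U_K^\mu\equiv I_K(\mu)$ on $\operatorname{supp}(\mu)$, hence $\mu$-almost everywhere, so the mixed energy satisfies
\[
I_K(\mu,\psi_0\mu)=\int_\Omega U_K^\mu\,\psi_0\,d\mu = I_K(\mu)\int_\Omega \psi_0\,d\mu = 0.
\]
Expanding bilinearly then yields
\[
I_K(\psi\mu)=I_K(\psi_0\mu)+2c\,I_K(\mu,\psi_0\mu)+c^2 I_K(\mu)=I_K(\psi_0\mu)+c^2 I_K(\mu),
\]
and since $I_K(\mu)\ge 0$ by hypothesis, everything reduces to showing $I_K(\psi_0\mu)\ge 0$ for bounded $\psi_0$ with zero $\mu$-mean.

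For that I would use a first-variation argument. Replacing $\psi_0$ by $a\psi_0$ for a small $a>0$ (which multiplies $I_K(\psi_0\mu)$ by $a^2>0$ and preserves the mean-zero property), we may assume $\|\psi_0\|_\infty<1$, so that $\widetilde\nu:=(1+\psi_0)\mu$ is a nonnegative measure of total mass $1+\int_\Omega\psi_0\,d\mu=1$, i.e. $\widetilde\nu\in\mathbb P(\Omega)$. Along the segment $\mu_t:=(1-t)\mu+t\widetilde\nu=\mu+t\psi_0\mu$, the minimality of $\mu$ (for $t\in[0,\tau_{\widetilde\nu}]$ in the directional sense, or for all $t\in[0,1]$ in the global case) gives, using $I_K(\mu,\psi_0\mu)=0$ once more,
\[
I_K(\mu)\le I_K(\mu_t)=I_K(\mu)+2t\,I_K(\mu,\psi_0\mu)+t^2 I_K(\psi_0\mu)=I_K(\mu)+t^2 I_K(\psi_0\mu),
\]
whence $t^2 I_K(\psi_0\mu)\ge 0$ for some $t>0$ and therefore $I_K(\psi_0\mu)\ge 0$, as required.

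The computations are routine; the only points needing care are: (i) arranging that the competitor $\widetilde\nu$ is a genuine probability measure, which is precisely why one first strips off the mean $c$; (ii) checking that $\mu_t$ is an admissible perturbation for whichever notion of local minimality is in force, handled by writing $\mu+t\psi_0\mu$ as the convex combination $(1-t)\mu+t\widetilde\nu$, exactly as in the discussion preceding Corollary~\ref{cor:Constant on Supp}; and (iii) the density/continuity passage from bounded $\psi$ to arbitrary $\psi\in L^2(\Omega,\mu)$. None of these is a serious obstacle, so I expect the proof to be short.
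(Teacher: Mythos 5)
Your proof is correct, and it takes a genuinely different route from the paper's. The paper argues by contradiction through the spectral theorem: if $T_{K,\mu}$ were not positive, there would be a continuous eigenfunction $\phi$ with negative eigenvalue, necessarily orthogonal to $\mathbbm{1}_\Omega$ (which is an eigenfunction with eigenvalue $I_K(\mu)\ge 0$ by Corollary~\ref{cor:Constant on Supp}), and the perturbation $(1+t\phi)\mu$ would then strictly decrease the energy. You instead verify positivity of the quadratic form directly on an arbitrary bounded $\psi$, stripping off the mean so that the hypothesis $I_K(\mu)\ge 0$ absorbs the constant direction and the constancy of $U_K^\mu$ on $\operatorname{supp}(\mu)$ kills the cross term, and then you run the same second-variation argument along $(1-t)\mu+t(1+\psi_0)\mu$ before closing with an $L^2$-density step. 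The underlying variational mechanism is identical (first-order term vanishes by $K$-invariance on the support, second-order term is signed by minimality), but your version avoids the spectral theorem and the continuity of eigenfunctions entirely, at the price of the extra mean-zero decomposition and the approximation of general $\psi\in L^2(\Omega,\mu)$ by bounded functions; the paper's version localizes the whole argument to a single offending eigenfunction, which is slightly slicker but less self-contained. All three of the delicate points you flag are handled correctly: $\widetilde\nu$ is a legitimate competitor in $\mathbb{P}(\Omega)$ after rescaling, the segment $(1-t)\mu+t\widetilde\nu$ is exactly the kind of perturbation allowed by Definition~\ref{def:locmin}, and the continuity of $\psi\mapsto\langle\psi,T_{K,\mu}\psi\rangle$ on $L^2(\Omega,\mu)$ justifies the final density passage.
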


\begin{proof}
We start by observing that if $\mu$ is a (global or local) minimizer of $I_K$, then the constant function $\mathbbm{1}_{\Omega}$ is an eigenfunction of $T_{K,\mu}$ in $L^2(\Omega, \mu)$. Indeed, according to  Theorem \ref{thm:Constant on Supp} or Corollary \ref{cor:Constant on Supp}, for each $x \in \operatorname{supp} (\mu)$, 
\begin{equation}\label{eq:oneeigen}
T_{K,\mu} \mathbbm{1}_{\Omega} (x) = \int\limits_\Omega K(x,y)\, d\mu (y)  = U_K^\mu (x)  = I_K (\mu) \mathbbm{1}_{\Omega} (x). 
\end{equation}

Assume, indirectly, that $T_{K, \mu}$ is not positive. By Lemma \ref{lem:Hilbert-Schmidt},   $T_{K, \mu}$ is compact and self-adjoint, so there exists an eigenfunction $\phi$ such that $T_{K, \mu} \phi = \lambda \phi$ with  $\lambda < 0$. Since $\phi$ is continuous, and therefore bounded, on $\operatorname{supp}(\mu)$, we have that for sufficiently small $t>0$, the measure
$$ \mu_t = (1 + t \phi) \mu$$
is positive. As we noted above, $\mathbbm{1}_{\Omega}$ is an eigenfunction of $T_{K, \mu}$ corresponding to the eigenvalue $I_K(\mu) \geq 0$. Clearly, then, $\mathbbm{1}_{\Omega}$ and $\phi$ are orthogonal, so
$$ \mu_t( \Omega) = \int\limits_{\Omega} (1 + t \phi(x))\, d \mu(x) = \mu( \Omega) = 1,$$
and 
\begin{align*}
I_K(\mu_t) &  = \int\limits_{\Omega} \int\limits_{\Omega} K(x,y) (1 + t \phi(x)) \big(1 + t \phi(y) \big)\,  d \mu(x)   d \mu(y)\\
&  = I_K(\mu) + \lambda t^2 \int\limits_{\Omega} |  \phi(x) |^2   d \mu(x) < I_K(\mu),
\end{align*}
which contradicts the (local) minimality of $\mu$ over probability measures.  
\end{proof}

Recall that according to  Lemma \ref{lem:Hilbert-Schmidt}, the operator  $T_{K,\mu}$ is positive if and only if $K$ is positive definite on the support of $\mu$.  If the condition $I_K (\mu) \ge 0$ in Lemma \ref{lem:OperatorPos} is not satisfied, we can replace $K$ by $K'(x,y) = K(x,y) - I_K(\mu)$, which does not affect energy  minimizers. Then $I_{K'}(\mu) = 0$, and hence,  Lemma \ref{lem:OperatorPos} applies.  Therefore $T_{K',\mu}$ is positive, i.e. $K'$ is positive definite on $\widetilde{\Omega} = \operatorname{supp}(\mu)$. In other words, $K$ is positive definite up to an additive constant, as a kernel on $\widetilde{\Omega}  \times \widetilde{\Omega}$. We arrive at the following important fact.

\begin{lemma}\label{lem:PosDefonSupp}
    Let $K$ be a kernel on $\Omega\times \Omega $. Suppose that $\mu$ is a
    {local} minimizer of $I_{K}$ over $\mathbb{P}( \Omega)$. Then the kernel
    $K$ must be positive definite modulo a constant on
    $\operatorname{supp}(\mu)$, i.e. as a kernel on $\operatorname{supp}(\mu)
    \times \operatorname{supp}(\mu)$. If $I_K(\mu) \geq 0$, then $K$ is
    positive definite on $\operatorname{supp}(\mu)$.
\end{lemma}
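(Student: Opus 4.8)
The plan is to reduce \textbf{Lemma \ref{lem:PosDefonSupp}} to \textbf{Lemma \ref{lem:OperatorPos}} together with \textbf{Lemma \ref{lem:Hilbert-Schmidt}}, using the standard trick that subtracting a constant from the kernel changes neither the energy minimizers nor (by Lemma \ref{lem:PD-CPD} and the discussion opening \S\ref{sec:CPDPDC1}) the relevant notions, but \emph{does} normalize the minimal energy value to zero. Throughout, write $\widetilde\Omega = \operatorname{supp}(\mu)$ and note that, since $\mu$ is by definition supported on $\widetilde\Omega$, every integral against $\mu$ (hence the operator $T_{K,\mu}$ and the energy $I_K$ on measures absolutely continuous with respect to $\mu$) only ever sees the values of $K$ on $\widetilde\Omega \times \widetilde\Omega$; this is exactly why the conclusion is naturally a statement about $K$ restricted to $\widetilde\Omega \times \widetilde\Omega$, consistent with the Remark following Mercer's Theorem.

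The steps, in order, are as follows. First I would set $C := I_K(\mu)$ and define $K'(x,y) := K(x,y) - C$. Since adding a constant to a kernel shifts $I_K(\nu)$ only by $C\,\nu(\Omega)^2$ and in particular shifts $I_K$ on $\mathbb P(\Omega)$ by the additive constant $C$, the measure $\mu$ is a (global or directional local --- recall the reductions summarized in \eqref{eq:LocMinRel} and Corollary \ref{cor:Constant on Supp}) minimizer of $I_{K'}$ as well, and moreover $I_{K'}(\mu) = I_K(\mu) - C = 0 \ge 0$. Second, I would apply Lemma \ref{lem:OperatorPos} to the kernel $K'$ and the minimizer $\mu$: its hypothesis $I_{K'}(\mu) \ge 0$ is now met, so $T_{K',\mu}$ is a positive operator on $L^2(\Omega,\mu)$. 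Third, I would invoke Lemma \ref{lem:Hilbert-Schmidt}, which gives that positivity of $T_{K',\mu}$ is equivalent to $K'$ being positive definite on $\widetilde\Omega = \operatorname{supp}(\mu)$. Hence $K' = K - C$ is positive definite on $\widetilde\Omega \times \widetilde\Omega$, i.e. $K$ is positive definite modulo the constant $C$ on $\operatorname{supp}(\mu)$, which is the first claim. For the second claim, if $I_K(\mu) \ge 0$ then we may run Lemma \ref{lem:OperatorPos} directly with the kernel $K$ itself (no shift needed), obtaining that $T_{K,\mu}$ is positive, and then Lemma \ref{lem:Hilbert-Schmidt} yields that $K$ is positive definite on $\operatorname{supp}(\mu)$.

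I do not expect a serious obstacle here, since the two substantive inputs --- Lemma \ref{lem:OperatorPos} (which contains the real work: perturbing $\mu$ along an eigenfunction with negative eigenvalue, using that $\mathbbm 1_\Omega$ is an eigenfunction thanks to Theorem \ref{thm:Constant on Supp}/Corollary \ref{cor:Constant on Supp}) and Lemma \ref{lem:Hilbert-Schmidt} --- have already been established. The only points that require a little care, and which I would state explicitly rather than gloss over, are: (i) that a (local) minimizer of $I_K$ over $\mathbb P(\Omega)$ is still a (local) minimizer of $I_{K'}$ over $\mathbb P(\Omega)$, which is immediate because $I_{K'} = I_K - C$ identically on $\mathbb P(\Omega)$, so the two functionals have exactly the same minimizers, global or local, in any of the three senses of Definition \ref{def:locmin}; and (ii) that Lemma \ref{lem:OperatorPos} is indeed applicable with ``local minimizer'' meaning the weakest (directional) version, which is how that lemma is phrased. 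With these noted, the proof is essentially a three-line chain of implications.
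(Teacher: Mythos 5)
Your proposal is correct and follows exactly the paper's argument: the paper also derives Lemma \ref{lem:PosDefonSupp} by replacing $K$ with $K' = K - I_K(\mu)$ so that $I_{K'}(\mu) = 0$, applying Lemma \ref{lem:OperatorPos} to conclude $T_{K',\mu}$ is positive, and then invoking Lemma \ref{lem:Hilbert-Schmidt} to get positive definiteness of $K'$ on $\operatorname{supp}(\mu)$. No substantive differences.
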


Various statements of this type are known in the literature \cites{CFP, FS}.  Lemma \ref{lem:PosDefonSupp} clearly implies the following localization statement:
\begin{corollary}\label{cor:NotFullSupp}
Assume the the kernel $K$ on $\Omega \times \Omega$ is not positive definite up to an additive constant. Then any (local or global) minimizer $\mu$ of $I_K$ must be supported on a proper subset of $\Omega$, i.e. $\operatorname{supp} (\mu) \subsetneq \Omega$. 
\end{corollary}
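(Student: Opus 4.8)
This is an immediate contrapositive consequence of Lemma~\ref{lem:PosDefonSupp}. The plan is as follows. Suppose, for the sake of contradiction, that there is a minimizer $\mu$ of $I_K$ (whether global over $\mathbb{P}(\Omega)$, or local in any of the three senses of Definition~\ref{def:locmin}) with $\operatorname{supp}(\mu) = \Omega$. Observe first that a global minimizer over $\mathbb{P}(\Omega)$ is in particular a local minimizer (indeed a directional local minimizer), so in all cases the hypotheses of Lemma~\ref{lem:PosDefonSupp} are met.

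Applying Lemma~\ref{lem:PosDefonSupp} then yields that $K$ is positive definite modulo an additive constant as a kernel on $\operatorname{supp}(\mu) \times \operatorname{supp}(\mu)$. Since by assumption $\operatorname{supp}(\mu) = \Omega$, this says precisely that $K$ is positive definite up to an additive constant on $\Omega \times \Omega$, contradicting the hypothesis of the corollary. Hence no such $\mu$ can have full support, i.e.\ every local or global minimizer $\mu$ satisfies $\operatorname{supp}(\mu) \subsetneq \Omega$.

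There is essentially no obstacle here: the content is entirely carried by Lemma~\ref{lem:PosDefonSupp} (which in turn rests on Lemma~\ref{lem:OperatorPos} and Lemma~\ref{lem:Hilbert-Schmidt}), and the only thing to check is the trivial remark that global minimizers are also local minimizers so that the lemma applies uniformly to all the cases named in the statement.
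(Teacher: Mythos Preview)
Your proposal is correct and matches the paper's approach exactly: the paper simply states that Lemma~\ref{lem:PosDefonSupp} ``clearly implies'' the corollary, which is precisely the contrapositive argument you give.
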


\section{Invariant Measures}\label{sec:InvMeasEnergy}

As suggested in Section \ref{sec:MinMeas}, measures with constant potentials are particularly interesting from the point of view of energy minimization. They also naturally arise in metric geometry, in connection with the so-called ``rendezvous numbers" \cite{CMY}, and we draw the term ``invariant'' from this literature. These applications and various interesting properties warrant a separate discussion of such measures.

\subsection{Definition, Examples, and Comments} 
We start with the following definition.

\begin{definition}\label{def:Kinv}
    Let $K$ be a kernel on $\Omega \times \Omega$. We say that a measure $\mu \in \mathbb{P}(\Omega)$ is \textbf{$K$-invariant}  on $\Omega$ 
    if the potential of this measure with respect to $K$  is constant on $\Omega$, i.e.
\begin{equation}\label{eq:Kinv}
U_K^\mu ( x )  = I_K (\mu) \,\,\, \textup{ for every } \,\, x\in \Omega. 
\end{equation}
\end{definition}
We shall see shortly that these measures have an array of remarkable properties. Notice that the definition does not require that $\mu$ has full support: the majority of statements in this section hold even in the absence of this assumption. 

Before proceeding to these properties we shall provide  some examples, showing that $K$-invariance  is a rather rich notion. Observe that  most of the examples below have full support. All statements that we shall prove for $K$-invariant measures will apply, in particular,  to  these natural examples. 

\begin{itemize}
\item If $\mu$ (locally)  minimizes $I_K$ and has full support, then according to Theorem \ref{thm:Constant on Supp}  and Corollary \ref{cor:Constant on Supp}   , the measure $\mu$ is $K$-invariant. 
\item Let $\Omega = \mathbb S^{d-1}$ and assume that $K$ is rotationally invariant, i.e. $K(x,y) = F (\langle x,y\rangle)$. Then the normalized uniform surface measure is $K$-invariant, since the potential $U_F^\sigma (x) = \int\limits_{\mathbb S^{d-1}} F (\langle x,y\rangle)\, d\sigma (y)$ is obviously independent of $x \in \mathbb S^{d-1}$. 
\item If, moreover, the function $F$ from the previous example is a polynomial of degree $M$, and $\omega_N = \{ z_1,...,z_N \} \subset \mathbb S^{d-1}$  is a spherical $M$-design, then $\mu = \frac1{N} \sum_{i=1}^N \delta_{z_i} $ is also  $K$-invariant.
\item Similarly, assume that $\Omega$ is a compact  two-point homogeneous space, $K$ is invariant with respect to the group of isometries, and  $\eta$ is the normalized uniform measure on $\Omega$. Then $\eta$ is $K$-invariant. This equally applies to connected (e.g., projective spaces) and discrete (e.g., Hamming cube) two-point homogeneous spaces.  
\item If $\Omega$  is a compact topological group, $\mu$ is its normalized Haar measure, and $K$ is invariant with respect to the group operation, i.e. $K(x,y) = F(y^{-1}x)$, then $\mu$ is $K$-invariant. 
\item A pair $ K $, $ \mu $ as in the above example can be constructed as follows. Suppose $ G $ is a compact group acting on $ \Omega $ transitively, $ \nu $ is the Haar measure on $ G $, and $ \mu $ -- a $ G $-invariant measure on $ \Omega $. Then for any function $ f \in L^2(\Omega, \mu) $, the kernel
    \[
        K(x,y) =  \int_{G} f(\tau x) f(\tau y) \, d\nu(\tau)
    \]
    is positive definite and $ \mu $ is $ K $-invariant. Indeed, $ K $ is an average of positive definite functions $ f(\tau x) f(\tau y)$; also, $ U^\mu_K $ is a $ G $-invariant function on $ \Omega $ and, since $ G $ acts transitively, must be constant there (see e.g. \cite{CSFSV} for further properties of such $ K $).

\item Let $\Omega = [-1,1]$ and $K (x,y ) = |x - y|$. Then the measure $\mu = \frac12 (\delta_0 + \delta_1) $ is $K$-invariant. Notice that this example (as well as spherical designs) provides an invariant measure which  does not have full support. 
\item More generally, if $\Omega = \mathbb B^d$ is the closed unit ball in $\mathbb R^d$ and $K(x,y) = \|x-y\|^{-(d-2)}$ (or $K(x,y)= - \log \| x-y \| $ for $d=2$)  is the Newtonian potential, then the uniform surface measure $\sigma$ on $\mathbb S^{d-1}$ is the equilibrium measure for $I_K$ and is  $K$-invariant on $\mathbb B^d$ \cite{BHS,L}, which is known as the Faraday cage effect. For $d=1$, one gets exactly the previous example, while for $d\ge 2$, the kernels are discontinuous (and thus are  partially  beyond the scope of our discussion). 
\item Still more general form of the Faraday's effect applies to $ \Omega \subset \mathbb R^3 $ that is given by a finite union of disjoint closed domains with smooth boundaries,  for $ K(x,y) = \|x-y\|^{-s} $, $ 1\leq s < 3 $: there exists a probability measure with potential constant on $ \Omega $ \cite[Section 17]{Fr}.
\end{itemize}

Despite an abundance of examples, the existence of a $K$-invariant measure is possible only under  certain restrictions on the geometry of the domain $\Omega$ and the structure of the kernel $K$.  For example, the following statement is true \cite{CMY}.

\begin{lemma}\label{lem:3pts}
Assume that $\mathbb R^d$ is  endowed 
with a  strictly convex norm $|\cdot |$, i.e. $|x+y| < |x| + | y|$ unless $x$ and $y \in \mathbb R^d$ have the same direction. 
Let $\Omega \subset \mathbb R^d $ be compact and  set $K(x,y) =| x-y |$.  If there exists a $K$-invariant measure on $\Omega$, then either $\Omega$ is a line segment or no three points of $\Omega$ are collinear. 
\end{lemma}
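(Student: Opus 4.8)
The strategy is to argue by contradiction: suppose $\Omega$ is not a segment, yet contains three collinear points, and derive a contradiction with the existence of a $K$-invariant measure $\mu$. Since three collinear points lie on a common line, pick three points $p, q, r$ of $\Omega$ with $q$ strictly between $p$ and $r$ on a line $\ell$; then $|p-r| = |p-q| + |q-r|$, which is the equality case of the triangle inequality that strict convexity of the norm otherwise forbids. The key observation is that $K$-invariance forces $U_K^\mu$ to be constant on all of $\Omega$, and in particular the potential, viewed as a function of the location along various lines, must satisfy a rigidity that clashes with strict convexity of $t \mapsto |x - (a+tv)|$.

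\textbf{Key steps.} First, I would record the elementary fact that for a strictly convex norm, the function $t \mapsto |x - (a + t v)|$ (distance from a fixed point $x$ to a point moving along a line) is \emph{strictly} convex in $t$ unless $x$ itself lies on that line, in which case it is piecewise linear (of the form $|t - t_0| \cdot |v|$). Consequently, for any fixed $y \in \Omega$, the map $t \mapsto K(a + tv, y) = |a + tv - y|$ is convex in $t$, strictly so off the line through $a$ in direction $v$. Integrating against $d\mu(y)$, the function $t \mapsto U_K^\mu(a + tv)$ is convex, and it is \emph{strictly} convex unless $\mu$ is entirely concentrated on the line $\{a + tv : t \in \mathbb{R}\} \cap \Omega$. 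Now suppose $\Omega$ contains three collinear points on a line $\ell$ and is not itself contained in $\ell$. Parametrize $\ell \cap \Omega$; since $K$-invariance gives $U_K^\mu \equiv I_K(\mu)$ on $\Omega$, the restriction of $U_K^\mu$ to $\ell \cap \Omega$ is constant, hence in particular not strictly convex on the sub-interval spanned by the three collinear points. By the dichotomy above, this forces $\operatorname{supp}(\mu) \subseteq \ell$. But then $U_K^\mu(x) = \int_\ell |x - y|\, d\mu(y)$ for all $x \in \Omega$; taking $x \in \Omega \setminus \ell$ (which exists by assumption) and letting $x$ vary along any line segment through $x$ transverse to $\ell$, strict convexity again applies (the support $\ell$ does not contain that transverse line), so $U_K^\mu$ cannot be constant in a neighborhood of $x$ within $\Omega$ — unless $\Omega$ near $x$ is too thin to detect this, which is where care is needed.

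\textbf{Main obstacle.} The delicate point is the very last step: having concluded $\operatorname{supp}(\mu) \subseteq \ell$, I must produce a genuine violation of $U_K^\mu \equiv \text{const}$ on $\Omega$ from a point $x_0 \in \Omega \setminus \ell$. The issue is that $\Omega$ is merely compact, so $x_0$ need not be an interior point and I cannot freely move along a transverse segment \emph{inside} $\Omega$. The cleanest fix is probably to compare values of $U_K^\mu$ at $x_0 \notin \ell$ and at a suitable point $x_1 \in \ell \cap \Omega$ directly: one shows $\int_\ell |x_0 - y|\, d\mu(y) \ne \int_\ell |x_1 - y|\, d\mu(y)$ for an appropriately chosen $x_1$ (e.g. the orthogonal-type projection of $x_0$ onto $\ell$, or the point of $\ell \cap \Omega$ nearest the relevant portion of $\operatorname{supp}\mu$), again invoking strict convexity to see the minimum of $x \mapsto \int_\ell |x-y|\,d\mu(y)$ over the affine span is attained only on $\ell$. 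Handling the convexity/strict-convexity bookkeeping for a general strictly convex norm (not Euclidean) is the part requiring the most attention, since orthogonal projection is no longer available and one works instead with the one-variable convexity lemma stated at the outset. Once that comparison is in hand, the contradiction with $K$-invariance is immediate, completing the proof.
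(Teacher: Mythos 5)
First, a point of reference: the paper does not actually prove this lemma --- it is quoted from \cite{CMY} --- so there is no in-text proof to compare against, and your argument has to stand on its own. Its first two thirds do: the one-variable lemma (for a strictly convex norm, $t\mapsto|x-(a+tv)|$ is strictly convex unless $x$ lies on the line $\ell=a+\mathbb Rv$, where it is piecewise linear), the deduction that $g(t)=U_K^{\mu}(a+tv)$ is strictly convex unless $\mu$ is concentrated on $\ell$, and the observation that a convex $g$ taking the same value at $t_p<t_q<t_r$ cannot be strictly convex, hence $\operatorname{supp}(\mu)\subseteq\ell$. Note that this step gives you more than you record: convexity forces $g\equiv I_K(\mu)=:c$ on all of $[t_p,t_r]$, not just at the three points, and this extra information is exactly what is needed below.

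The gap is where you say it is, but your proposed repair does not close it. The claim that $x\mapsto\int_\ell|x-y|\,d\mu(y)$ attains its minimum only on $\ell$ is true in this situation, but it does not follow from ``strict convexity along transverse lines'': a function can be strictly convex along every line transverse to $\ell$ and still have its minimizer off $\ell$ (e.g.\ $x\mapsto|x-z|$ with $z\notin\ell$), and for a non-Euclidean norm there is no projection onto $\ell$ to fall back on. What closes the gap is the structure of $\mu$ forced by the constancy of the potential on $[p,r]$: letting $\tilde\mu$ be the pushforward of $\mu$ to the parameter line, the one-dimensional potential $\phi(s)=|v|\int|s-t|\,d\tilde\mu(t)$ is constant on $[t_p,t_r]$; since $\phi''=2|v|\tilde\mu$ distributionally and $\phi'(s)=|v|\big(\tilde\mu(\{t<s\})-\tilde\mu(\{t>s\})\big)$ off atoms, this yields $\tilde\mu((t_p,t_r))=0$ and $\tilde\mu((-\infty,t_p])=\tilde\mu([t_r,\infty))=\tfrac12$ (in particular $\mu$ is not a point mass). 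Writing $\mu=\tfrac12\mu_-+\tfrac12\mu_+$ accordingly, for every $x$ one has $2U_K^{\mu}(x)=\iint\big(|x-y|+|x-y'|\big)\,d\mu_-(y)\,d\mu_+(y')\ \ge\ \iint|y-y'|\,d\mu_-(y)\,d\mu_+(y')=2c$, and the equality case of the triangle inequality in a strictly convex norm shows the inequality is strict unless $x\in[y,y']\subseteq\ell$ for $\mu_-\times\mu_+$-a.e.\ pair. Hence $U_K^{\mu}(x)>c$ for every $x\notin\ell$, contradicting $K$-invariance at $x_0\in\Omega\setminus\ell$. With this pairing argument inserted, your proof is complete.

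One caveat concerns the statement rather than your argument: what one actually obtains is that $\Omega$ is contained in a line, which is the most that can be true --- $\Omega=\{0,1,2\}\subset\mathbb R$ with $\mu=\tfrac12(\delta_0+\delta_2)$ is $K$-invariant, contains three collinear points, and is not a line segment. So ``line segment'' must be read as ``subset of a line'' (or connectedness of $\Omega$ must be assumed, as in \cite{CMY}); your proof, once repaired, establishes exactly this correct version.
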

For the case when $K(x,y) =\|x-y\|$ is the Euclidean distance, this lemma shows, for example, that an invariant measure doesn't exist for the unit ball, while, as we know, it does exist for the sphere.

Finally, we make the remark that if a measure $\mu$ is $K$-invariant, it implies that a constant function is an eigenfunction of the Hilbert--Schmidt operator $T_{K,\mu}$ in $L^2 (\Omega,\mu)$ with eigenvalue $\lambda = I_K ( \mu)$, which is implied by \eqref{eq:oneeigen}.

\subsection{A Crucial Identity}

The following simple relation provides a powerful direct link between energy minimization and (conditional) positive definiteness and will play a decisive role in many results of this section. It is also an important first step in the proof of the Generalized Stolarsky principle (Theorem \ref{thm:genStol}). In a nutshell, this lemma states that, while $I_K$ is a quadratic functional, it behaves linearly around a $K$-invariant measure. 

\begin{lemma}\label{lem:NuMinusMu}
    Let $K$ be a kernel on $\Omega \times \Omega$ and let $\mu$ be a $K$-invariant measure, i.e. $U_K^{\mu}(x) = I_K (\mu) $ for  all  $x\in \Omega$. Then for any $\nu \in \widetilde{\mathbb{P}} (\Omega)$,
\begin{equation}\label{eq:Linear1}
I_K(\nu - \mu) = I_K(\nu) - I_K(\mu).
\end{equation}

\noindent More generally, if $\mu \in \mathbb{P}(\Omega)$ satisfies  $U_K^{\mu}(x) \geq I_K(\mu)$, with equality on $\operatorname{supp}(\mu)$, then for any $\nu \in {\mathbb{P}}(\Omega)$
\begin{equation}\label{eq:subLinear}
I_K(\nu - \mu) \leq I_K(\nu) - I_K(\mu),
\end{equation}
and equality \eqref{eq:Linear1} holds for any measure $\nu  \in \widetilde{\mathbb{P}}(\Omega)$ with $\operatorname{supp}(\nu) \subseteq \operatorname{supp}(\mu)$. 
\end{lemma}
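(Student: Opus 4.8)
The plan is to expand $I_K(\nu - \mu)$ using bilinearity and symmetry of $I_K$, reducing everything to the mixed energy $I_K(\mu,\nu)$, and then to exploit the $K$-invariance hypothesis to evaluate $I_K(\mu,\nu)$ in terms of $I_K(\mu)$. First I would write
\[
I_K(\nu - \mu) = I_K(\nu) - 2 I_K(\mu,\nu) + I_K(\mu),
\]
so that \eqref{eq:Linear1} is equivalent to the claim $I_K(\mu,\nu) = I_K(\mu)$. Now observe that by Fubini's theorem and the definition of the potential,
\[
I_K(\mu,\nu) = \int_\Omega \int_\Omega K(x,y)\, d\mu(x)\, d\nu(y) = \int_\Omega U_K^\mu(y)\, d\nu(y).
\]
Since $\mu$ is $K$-invariant, $U_K^\mu(y) = I_K(\mu)$ for every $y \in \Omega$, hence the integral equals $I_K(\mu) \cdot \nu(\Omega) = I_K(\mu)$ because $\nu \in \widetilde{\mathbb P}(\Omega)$ has total mass one. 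Substituting back gives \eqref{eq:Linear1}.

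For the second, more general statement, I would argue in exactly the same way but keep track of the inequality. If $\nu \in \mathbb P(\Omega)$ (so $\nu \geq 0$ and $\nu(\Omega)=1$), then
\[
I_K(\mu,\nu) = \int_\Omega U_K^\mu(y)\, d\nu(y) \geq \int_\Omega I_K(\mu)\, d\nu(y) = I_K(\mu),
\]
using the pointwise bound $U_K^\mu \geq I_K(\mu)$ and nonnegativity of $\nu$. Plugging this into $I_K(\nu-\mu) = I_K(\nu) - 2 I_K(\mu,\nu) + I_K(\mu)$ and using $-2 I_K(\mu,\nu) \leq -2 I_K(\mu)$ yields $I_K(\nu-\mu) \leq I_K(\nu) - I_K(\mu)$, which is \eqref{eq:subLinear}. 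Finally, if $\nu \in \widetilde{\mathbb P}(\Omega)$ with $\operatorname{supp}(\nu) \subseteq \operatorname{supp}(\mu)$, then in $\int_\Omega U_K^\mu(y)\, d\nu(y)$ the integrand may be replaced by its values on $\operatorname{supp}(\mu)$, where by hypothesis $U_K^\mu(y) = I_K(\mu)$ exactly; hence $I_K(\mu,\nu) = I_K(\mu)\,\nu(\Omega) = I_K(\mu)$ even though $\nu$ is only a signed measure of mass one, and \eqref{eq:Linear1} follows as before.

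There is essentially no serious obstacle here; the only points requiring a modicum of care are the applicability of Fubini's theorem (which is immediate since $K$ is continuous and $\mu,\nu$ are finite) and, in the signed-measure case, the observation that the sign of $\nu$ is irrelevant once the integrand is constant on $\operatorname{supp}(\nu)$. I would also remark that the hypothesis $U_K^\mu \geq I_K(\mu)$ with equality on $\operatorname{supp}(\mu)$ is precisely the conclusion of Theorem~\ref{thm:Constant on Supp}, so \eqref{eq:subLinear} applies in particular whenever $\mu$ is a global (or, via Corollary~\ref{cor:Constant on Supp}, local) minimizer of $I_K$; this is worth stating explicitly since it is the form used in the sequel.
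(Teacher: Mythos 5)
Your proof is correct and follows essentially the same route as the paper: expand $I_K(\nu-\mu)$ bilinearly, reduce to $I_K(\mu,\nu)=\int_\Omega U_K^\mu\,d\nu$, and use constancy (resp.\ the lower bound) of the potential. The paper phrases the final assertion as ``replace $\Omega$ by $\operatorname{supp}(\mu)$,'' but that is the same observation you make about the integrand being constant on $\operatorname{supp}(\nu)\subseteq\operatorname{supp}(\mu)$.
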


\begin{proof}
If $\mu$ is $K$-invariant, then for any $\nu  \in \widetilde{\mathbb{P}}(\Omega)$,
\begin{equation}\label{eq:MixedInvariant}
I_K (\mu, \nu) = \int\limits_{\Omega} U_K^{\mu}(x)\, d\nu(x) = \int\limits_{\Omega} I_K(\mu)\, d \nu(x) = I_K (\mu).
\end{equation}
Therefore
\begin{align*}
I_K( \nu - \mu) & = I_K(\nu) - 2 I_K (\mu, \nu) 
+ I_K(\mu)  = I_K (\nu) - I_K (\mu).
\end{align*}

For the second part of our claim, observe  that for any $ \nu \in {\mathbb{P}}(\Omega)$, instead of equality \eqref{eq:MixedInvariant}, one has the inequality $I_K (\mu,\nu) \ge I_K (\mu)$, and thus, 
\begin{equation*}
I_K(\nu - \mu) = I_K(\nu) - 2 I_K(\mu,\nu) + I_K(\mu) \le I_K(\nu) - I_K(\mu).
\end{equation*}
Finally, the last statement follows from the first  one by replacing $\Omega$ with $\operatorname{supp} (\mu)$. 
\end{proof}

Theorem \ref{thm:Constant on Supp} and Corollary \ref{cor:Constant on Supp}  show that if  $\mu$ is a global (or at least local) minimizer of $I_K$, it satisfies the  the conditions of  the second statement in Lemma \ref{lem:NuMinusMu}, and if in addition $\mu$ has full support, it also satisfies the first condition, i.e. $\mu$ is $K$-invariant. Thus Lemma \ref{lem:NuMinusMu} applies to (local) energy minimizers, which  results in the following corollary:

\begin{corollary}\label{prop:BoundonNuMinusMu}
Let $K$ be a kernel on $\Omega \times \Omega$ and $\mu$ be a (local) minimizer of $I_K$. Then for any $\nu \in \mathbb{P}(\Omega)$
\begin{equation}\label{eq:LMsublinear}
I_K(\nu - \mu) \leq I_K(\nu) - I_K(\mu).
\end{equation}
For any $\nu \in \widetilde{\mathbb{P}}(\Omega)$ such that $\operatorname{supp}(\nu) \subseteq \operatorname{supp} (\mu)$, then
\begin{equation}
I_K(\nu - \mu) = I_K(\nu) - I_K(\mu).
\end{equation}
\end{corollary}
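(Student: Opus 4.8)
The plan is to derive both conclusions directly from Lemma \ref{lem:NuMinusMu} by verifying that a (local) minimizer $\mu$ of $I_K$ satisfies the hypotheses of its second part. First I would invoke Theorem \ref{thm:Constant on Supp} (if $\mu$ is a global minimizer) or Corollary \ref{cor:Constant on Supp} (if $\mu$ is only a local minimizer, in any of the three senses recorded in \eqref{eq:LocMinRel}) to conclude that $U_K^{\mu}(x) = I_K(\mu)$ for all $x \in \operatorname{supp}(\mu)$ and $U_K^{\mu}(x) \geq I_K(\mu)$ for all $x \in \Omega$. These are precisely the conditions assumed in the ``more general'' part of Lemma \ref{lem:NuMinusMu}.

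Given this, the first inequality \eqref{eq:LMsublinear} is an immediate application of \eqref{eq:subLinear}: for any $\nu \in \mathbb{P}(\Omega)$, we get $I_K(\nu - \mu) \leq I_K(\nu) - I_K(\mu)$. The second (equality) claim follows from the final assertion of Lemma \ref{lem:NuMinusMu}, which states that \eqref{eq:Linear1} holds for any $\nu \in \widetilde{\mathbb{P}}(\Omega)$ with $\operatorname{supp}(\nu) \subseteq \operatorname{supp}(\mu)$; hence $I_K(\nu - \mu) = I_K(\nu) - I_K(\mu)$ for such $\nu$.

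I do not anticipate a genuine obstacle here, since the corollary is essentially a restatement of Lemma \ref{lem:NuMinusMu} specialized to energy minimizers. The only point requiring care — and the one I would state explicitly — is that the potential-theoretic input must come from Corollary \ref{cor:Constant on Supp} rather than Theorem \ref{thm:Constant on Supp} when $\mu$ is merely a directional local minimizer; once that is in place, the hypotheses transfer verbatim. A one-line proof suffices: apply Theorem \ref{thm:Constant on Supp} or Corollary \ref{cor:Constant on Supp} to obtain the potential conditions on $U_K^\mu$, then apply the second part of Lemma \ref{lem:NuMinusMu}.
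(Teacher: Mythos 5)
Your proposal is correct and follows exactly the paper's route: the text preceding the corollary notes that Theorem \ref{thm:Constant on Supp} and Corollary \ref{cor:Constant on Supp} show a (local) minimizer satisfies the hypotheses of the second part of Lemma \ref{lem:NuMinusMu}, and then deduces both \eqref{eq:LMsublinear} and the equality case from \eqref{eq:subLinear} and the final assertion of that lemma. Your explicit remark about needing Corollary \ref{cor:Constant on Supp} for directional local minimizers matches the paper's intent precisely.
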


\subsection{Conditional Positive Definiteness and Energy Minimization}

Identity \eqref{eq:Linear1} of Lemma \ref{lem:NuMinusMu} provides a  link between energy minimization and conditional positive definiteness. 
We would like to emphasize that   relation \eqref{eq:Linear1} holds not just for probability measures $\nu$, but for arbitrary signed measures of total mass one, i.e. $\nu \in  \widetilde{\mathbb{P}}(\Omega)$. Therefore, one can immediately deduce the following equivalence. 

\begin{theorem}\label{thm:CPDequivMIN}
Let $K$ be a kernel on $\Omega \times \Omega$ and assume that $\mu$ is $K$-invariant. Then $\mu$ minimizes  $I_K$ over the set $\widetilde{\mathbb{P}}(\Omega)$ of normalized signed Borel measures  if and only if $K$ is conditionally positive definite.  

Moreover, $\mu$ uniquely  minimizes  $I_K$ over   $\widetilde{\mathbb{P}}(\Omega)$  if and only if $K$ is conditionally  strictly positive definite.
\end{theorem}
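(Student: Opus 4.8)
The plan is to derive everything directly from the crucial identity \eqref{eq:Linear1} of Lemma~\ref{lem:NuMinusMu}, which for a $K$-invariant $\mu$ gives $I_K(\nu-\mu) = I_K(\nu) - I_K(\mu)$ for every $\nu \in \widetilde{\mathbb P}(\Omega)$. The whole statement is essentially a translation between the vanishing-mass side (where conditional positive definiteness lives) and the mass-one side (where the minimization takes place), under the bijection $\nu \mapsto \nu - \mu$ between $\widetilde{\mathbb P}(\Omega)$ and $\mathcal Z(\Omega)$.

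First I would prove the non-strict equivalence. For the ``if'' direction, assume $K$ is conditionally positive definite; given any $\nu \in \widetilde{\mathbb P}(\Omega)$, the measure $\nu - \mu$ has total mass $1 - 1 = 0$, so $\nu - \mu \in \mathcal Z(\Omega)$ and hence $I_K(\nu - \mu) \ge 0$; plugging into \eqref{eq:Linear1} yields $I_K(\nu) \ge I_K(\mu)$, so $\mu$ minimizes $I_K$ over $\widetilde{\mathbb P}(\Omega)$. For the ``only if'' direction, assume $\mu$ is such a minimizer; given an arbitrary $\eta \in \mathcal Z(\Omega)$, the measure $\nu := \mu + \eta$ has total mass one, i.e. $\nu \in \widetilde{\mathbb P}(\Omega)$, so minimality gives $I_K(\nu) \ge I_K(\mu)$, and \eqref{eq:Linear1} then reads $I_K(\eta) = I_K(\nu-\mu) = I_K(\nu) - I_K(\mu) \ge 0$; since $\eta$ was arbitrary in $\mathcal Z(\Omega)$, $K$ is conditionally positive definite.

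For the strict version I would run the same two arguments while tracking the equality cases. If $K$ is conditionally strictly positive definite and $\nu \in \widetilde{\mathbb P}(\Omega)$ with $\nu \neq \mu$, then $\nu - \mu$ is a nonzero element of $\mathcal Z(\Omega)$, so $I_K(\nu-\mu) > 0$ and thus $I_K(\nu) > I_K(\mu)$, giving uniqueness. Conversely, if $\mu$ is the unique minimizer over $\widetilde{\mathbb P}(\Omega)$, then in particular it is a minimizer, so by the non-strict part $K$ is already conditionally positive definite; to upgrade to strictness, suppose $\eta \in \mathcal Z(\Omega)$ satisfies $I_K(\eta) = 0$, set $\nu := \mu + \eta \in \widetilde{\mathbb P}(\Omega)$, and observe via \eqref{eq:Linear1} that $I_K(\nu) = I_K(\mu) + I_K(\eta) = I_K(\mu)$, so $\nu$ is also a minimizer; uniqueness forces $\nu = \mu$, i.e. $\eta = 0$.

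I do not anticipate a real obstacle here: the only things to be careful about are the bookkeeping of total masses (so that $\nu-\mu \in \mathcal Z(\Omega)$ and $\mu + \eta \in \widetilde{\mathbb P}(\Omega)$) and the invocation of the correct half of Lemma~\ref{lem:NuMinusMu} — namely the first, exact identity \eqref{eq:Linear1}, which holds for all signed measures of mass one precisely because $\mu$ is genuinely $K$-invariant (constant potential on all of $\Omega$), not merely a minimizer. This is why the statement is phrased for $\widetilde{\mathbb P}(\Omega)$ rather than $\mathbb P(\Omega)$: the linear behavior of $I_K$ around $\mu$ is exact in all mass-one directions, so no boundary/positivity constraints intervene.
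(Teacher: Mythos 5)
Your proposal is correct and follows essentially the same route as the paper's own proof: both directions reduce to identity \eqref{eq:Linear1} via the correspondence $\nu \mapsto \nu - \mu$ between $\widetilde{\mathbb{P}}(\Omega)$ and $\mathcal{Z}(\Omega)$, with the strict/uniqueness statements obtained by tracking equality cases exactly as in the paper.
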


\begin{proof}
Suppose that $K$ is conditionally positive definite. Then for any $\nu \in  \widetilde{\mathbb{P}}(\Omega)$, equality  \eqref{eq:Linear1} holds and, since $(\nu - \mu ) (\Omega ) = 0$, we have 
\begin{equation*}
I_K (\nu ) - I_K (\mu)  = I_K (\nu - \mu ) \ge 0,
\end{equation*} 
which shows that $\mu$ minimizes $I_K$ over $ \widetilde{\mathbb{P}}(\Omega)$. If $K$ is conditionally strictly positive definite, then $I_K (\nu ) = I_K (\mu)$ only if $\nu - \mu =0$,  
i.e. $\mu$ is the unique minimizer. 

Assume conversely that $I_K (\mu ) \le I_K (\nu)$ for each $\nu \in  \widetilde{\mathbb{P}}(\Omega)$. Consider an arbitrary signed measure $\gamma \in \mathcal Z (\Omega)$. Define $\nu = \mu +\gamma$, then $\nu (\Omega ) = 1$, i.e. $\nu \in  \widetilde{\mathbb{P}}(\Omega)$. Thus, applying \eqref{eq:Linear1} once again, we find that 
\begin{equation*}
I_K (\gamma ) = I_K (\nu - \mu ) = I_K (\nu ) - I_K (\mu) \ge 0,
\end{equation*}
hence $K$ is conditionally positive definite. If $\mu$ is the unique minimizer, then the expression above equals zero only for $\gamma=0$, i.e. $K$ is conditionally strictly positive definite. 
\end{proof}


Obviously,  one of the implications holds for minimizers over probability measures $\mathbb{P}(\Omega)$. 

\begin{corollary}\label{cor:CPDequivMIN}
Let $K$ be a kernel on $\Omega \times \Omega$ and assume that $\mu$ is $K$-invariant.  If  $K$ is conditionally (strictly) positive definite, then $\mu$ (uniquely) minimizes  $I_K$ over the set ${\mathbb{P}}(\Omega)$ of   Borel probability measures. 
\end{corollary}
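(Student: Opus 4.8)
The plan is to derive Corollary~\ref{cor:CPDequivMIN} as an immediate consequence of Theorem~\ref{thm:CPDequivMIN}, using only the trivial inclusion $\mathbb P(\Omega) \subseteq \widetilde{\mathbb P}(\Omega)$. Indeed, every Borel probability measure has total mass one, hence is a signed measure of mass one, so a measure that minimizes $I_K$ over the larger set $\widetilde{\mathbb P}(\Omega)$ automatically minimizes it over the smaller set $\mathbb P(\Omega)$, and uniqueness over the larger set forces uniqueness over the smaller one.

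Concretely, first I would invoke Theorem~\ref{thm:CPDequivMIN}: since $\mu$ is $K$-invariant and $K$ is conditionally positive definite, $\mu$ minimizes $I_K$ over $\widetilde{\mathbb P}(\Omega)$, i.e. $I_K(\mu) \le I_K(\nu)$ for all $\nu \in \widetilde{\mathbb P}(\Omega)$. In particular this inequality holds for all $\nu \in \mathbb P(\Omega)$, which is exactly the assertion that $\mu$ minimizes $I_K$ over $\mathbb P(\Omega)$. For the strict case, again by Theorem~\ref{thm:CPDequivMIN}, conditional strict positive definiteness gives that $\mu$ is the \emph{unique} minimizer over $\widetilde{\mathbb P}(\Omega)$; since any probability-measure minimizer of $I_K$ is in particular a minimizer over $\widetilde{\mathbb P}(\Omega)$ (its energy equals the common minimal value $I_K(\mu)$), it must coincide with $\mu$. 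Hence $\mu$ is the unique minimizer over $\mathbb P(\Omega)$ as well.

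There is essentially no obstacle here — this is a one-line corollary. The only point worth a moment's care is the uniqueness direction: one should note that if $\nu \in \mathbb P(\Omega)$ also achieves $I_K(\nu) = I_K(\mu) = \min_{\widetilde{\mathbb P}(\Omega)} I_K$, then $\nu$ is a minimizer over $\widetilde{\mathbb P}(\Omega)$ too, and uniqueness there yields $\nu = \mu$. Alternatively, and perhaps more transparently, one can simply repeat the direct argument: by Lemma~\ref{lem:NuMinusMu}, for $\nu \in \mathbb P(\Omega)$ one has $I_K(\nu) - I_K(\mu) = I_K(\nu-\mu) \ge 0$ since $\nu - \mu \in \mathcal Z(\Omega)$ and $K$ is conditionally positive definite, with strict inequality when $\nu \ne \mu$ under conditional strict positive definiteness. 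Either route is a few lines at most.

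\begin{proof}
Since $\mathbb{P}(\Omega) \subseteq \widetilde{\mathbb{P}}(\Omega)$, this is an immediate consequence of Theorem~\ref{thm:CPDequivMIN}: if $K$ is conditionally positive definite, then $\mu$ minimizes $I_K$ over $\widetilde{\mathbb{P}}(\Omega)$, hence in particular over the subset $\mathbb{P}(\Omega)$. If moreover $K$ is conditionally strictly positive definite, then $\mu$ is the unique minimizer over $\widetilde{\mathbb{P}}(\Omega)$; since any minimizer of $I_K$ over $\mathbb{P}(\Omega)$ attains the value $I_K(\mu) = \min_{\widetilde{\mathbb{P}}(\Omega)} I_K$ and is therefore also a minimizer over $\widetilde{\mathbb{P}}(\Omega)$, it must equal $\mu$. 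Thus $\mu$ is the unique minimizer of $I_K$ over $\mathbb{P}(\Omega)$.
\end{proof}
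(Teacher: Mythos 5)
Your proposal is correct and matches the paper's (implicit) argument exactly: the paper derives this corollary from Theorem~\ref{thm:CPDequivMIN} via the inclusion $\mathbb P(\Omega)\subseteq\widetilde{\mathbb P}(\Omega)$, treating it as immediate. Your extra care about transferring uniqueness from the larger class to the smaller one is sound and fills in the one step the paper leaves unstated.
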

This corollary is well known, see, e.g., Theorem 4.2.11 in \cite{BHS}, but the equivalence in Theorem \ref{thm:CPDequivMIN} appears to be new.

Some remarks are in order. We would like to remind the reader that in some specific cases, such as the sphere with the uniform surface measure (or more generally, two-point homogeneous spaces with the corresponding uniform measures),  the relation between energy minimization and some form of positive definiteness of the kernel is well known \cites{Sc, BDM}. However, it is usually demonstrated using the representation theory of the underlying space and the associated orthogonal polynomial (Gegenbauer, Jacobi, Krawtchouk) expansions. In fact, Theorem~\ref{thm:CPDequivMIN} can be viewed as a generalization of the so-called {\it mean inequality} due to Kabatianskii and Levenshtein \cite[Theorem 5, Chapter 9.3]{CS}, in which $ K $ is an invariant positive definite kernel on a 2-point homogeneous space. The Theorem is thus a blanket statement that covers all of these examples and beyond. Moreover, it relies only on the completely elementary identity \eqref{eq:Linear1}, thus simplifying the known proofs in all of the specific cases. In the spherical case, a similar approach has been recently employed in \cite{BDM}.

\subsection{Conditional Positive Definiteness and  Positive Definiteness up to a Constant (Revisited)}\label{sec:CPDDPC2}
As we have observed in the previous discussions, two properties, which are somewhat weaker than positive definiteness, play an important role in energy minimization: namely, conditional positive definiteness and positive definiteness up to an additive constant.  We have already demonstrated in Lemma \ref {lem:PD-CPD} that the latter always  implies the former, and the converse implication is not true in general. We shall now show that the converse implication also holds, i.e. conditional positive definiteness implies positive definiteness up to an additive constant, if we additionally assume the existence of a $K$-invariant measure. Moreover, the statement also holds for the ``strict'' version of these properties. 

\begin{lemma}\label{lem:CPDtoPDC}
Let $K$ be a kernel on $\Omega\times \Omega$ and assume that $K$ is conditionally  (strictly) positive definite. Suppose also that there exists a $K$-invariant measure $\mu \in \mathbb{P}(\Omega)$. Then $K$  is (strictly) positive definite up to  an additive constant. 
\end{lemma}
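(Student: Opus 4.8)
The plan is to find the right additive constant $C$ explicitly in terms of the $K$-invariant measure $\mu$, and then verify that $K+C$ is positive definite by reducing the inequality $I_{K+C}(\nu)\ge 0$ for an arbitrary signed measure $\nu\in\mathcal M(\Omega)$ to the assumed conditional positive definiteness, which only controls mean-zero measures. The natural candidate is $C=-I_K(\mu)$, so that the shifted kernel $\widetilde K(x,y):=K(x,y)-I_K(\mu)$ satisfies $U^\mu_{\widetilde K}\equiv 0$ and $I_{\widetilde K}(\mu)=0$; shifting by a constant changes neither conditional positive definiteness (as observed just before Lemma~\ref{lem:PD-CPD}) nor the property of having an invariant measure, so it suffices to prove the claim under the normalization $I_K(\mu)=0$, $U^\mu_K\equiv 0$.

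First I would take an arbitrary $\nu\in\mathcal M(\Omega)$ and write $m=\nu(\Omega)$. The idea is to split off the multiple of $\mu$ that carries the mass: set $\nu = (\nu - m\mu) + m\mu$, where $\gamma := \nu - m\mu \in \mathcal Z(\Omega)$ is a mean-zero measure. Expanding bilinearly,
\begin{equation*}
I_K(\nu) = I_K(\gamma) + 2m\, I_K(\gamma,\mu) + m^2 I_K(\mu).
\end{equation*}
Now invoke the normalization: $I_K(\mu)=0$, and $I_K(\gamma,\mu) = \int_\Omega U^\mu_K\, d\gamma = 0$ since $U^\mu_K\equiv 0$ (this is exactly the computation \eqref{eq:MixedInvariant}). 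Hence $I_K(\nu) = I_K(\gamma) \ge 0$ by conditional positive definiteness of $K$, since $\gamma\in\mathcal Z(\Omega)$. This shows $\widetilde K=K-I_K(\mu)$ is positive definite, i.e.\ the original $K$ is positive definite up to the additive constant $-I_K(\mu)$.

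For the strict version, suppose $K$ is conditionally strictly positive definite and $I_{\widetilde K}(\nu)=0$; then the computation above forces $I_K(\gamma)=0$ with $\gamma\in\mathcal Z(\Omega)$, hence $\gamma=0$, i.e.\ $\nu = m\mu$. But then $0 = I_{\widetilde K}(\nu) = m^2 I_{\widetilde K}(\mu) = 0$ gives no contradiction directly, so one must rule out $\nu=m\mu$ with $m\ne 0$: here note that if $m\ne 0$ then $\nu - \mu$ (a genuinely nonzero mean-zero measure when $m\ne 1$, and for $m=1$ one has $\nu=\mu$ and $\widetilde K$ strictness on $\mathcal Z$ is irrelevant) — more cleanly, since $\widetilde K$ restricted to mean-zero measures is strictly positive definite and any multiple $m\mu$ with $m\ne 0$ is not mean-zero, one instead argues that $\mu$ itself, being $\widetilde K$-invariant with $I_{\widetilde K}(\mu)=0$, combined with conditional strict positive definiteness is impossible unless $\mu$ is somehow degenerate. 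I expect the main obstacle to be precisely this endgame: checking that the strict case does not admit a nonzero measure of the form $m\mu$ in the kernel of $I_{\widetilde K}$. The resolution should be that if $I_{\widetilde K}(\mu)=0$ and $\mu$ is $\widetilde K$-invariant, then by Lemma~\ref{lem:NuMinusMu} (identity \eqref{eq:Linear1}) $I_{\widetilde K}(\nu) = I_{\widetilde K}(\nu-\mu)$ for $\nu\in\widetilde{\mathbb P}(\Omega)$, and conditional strict positive definiteness forces $\nu=\mu$ to be the unique such minimizer with value $0$ — so one concludes $I_{\widetilde K}(\nu)>0$ for all $\nu\in\mathcal M(\Omega)\setminus\{0\}$ by combining the mass-splitting above with this uniqueness, handling the case $m=0$ by strictness on $\mathcal Z(\Omega)$ and the case $m\ne 0$ by rescaling to $\widetilde{\mathbb P}(\Omega)$.
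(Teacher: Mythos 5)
Your argument for the non-strict statement is correct and is essentially the paper's: you normalize by an additive constant, split an arbitrary $\nu\in\mathcal M(\Omega)$ into a mean-zero part plus a multiple of $\mu$, and use the fact that the potential of $\mu$ is constant to kill the cross term (the paper phrases this through identity \eqref{eq:Linear1} applied to measures in $\widetilde{\mathbb P}(\Omega)$, which is the same decomposition in different clothing).

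However, the strict case has a genuine gap, and you have correctly located but not resolved it. With your choice $C=-I_K(\mu)$, the shifted kernel $\widetilde K=K-I_K(\mu)$ satisfies $I_{\widetilde K}(\mu)=0$ with $\mu\neq 0$, so $\widetilde K$ is \emph{never} strictly positive definite, no matter what further properties $K$ has; the final claim that ``$I_{\widetilde K}(\nu)>0$ for all $\nu\in\mathcal M(\Omega)\setminus\{0\}$'' is simply false at $\nu=\mu$. The uniqueness of $\mu$ as a minimizer over $\widetilde{\mathbb P}(\Omega)$ cannot rescue this, because $\mu$ itself is a nonzero measure achieving energy $0$. The fix is one line: take $C=-I_K(\mu)+1$ (any constant strictly exceeding $-I_K(\mu)$ works), as the paper does. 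Then $U^\mu_{K+C}\equiv 1$ and $I_{K+C}(\mu)=1$, and your own decomposition $\nu=\gamma+m\mu$ with $\gamma\in\mathcal Z(\Omega)$ gives
\begin{equation*}
I_{K+C}(\nu)=I_{K+C}(\gamma)+2m\,I_{K+C}(\gamma,\mu)+m^2 I_{K+C}(\mu)=I_K(\gamma)+m^2,
\end{equation*}
since $I_{K+C}(\gamma,\mu)=\int_\Omega U^\mu_{K+C}\,d\gamma=\gamma(\Omega)=0$ and $I_{K+C}(\gamma)=I_K(\gamma)$ for mean-zero $\gamma$. This is nonnegative in general, and in the strict case it vanishes only when $m=0$ and $\gamma=0$, i.e.\ $\nu=0$, which completes the strict version cleanly.
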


\begin{proof}
Let $K$ be conditionally (strictly) positive definite. Set $C = - I_K (\mu)+ 1$. Then $K+C $ is still conditionally (strictly) positive definite, $\mu$ is $(K+C)$-invariant,  and $I_{K+C}  (\mu ) = (\mu(\Omega))^2 =1 $.  For any signed measure $\nu$ with  $\nu (\Omega ) = 1$, identity \eqref{eq:Linear1} implies that 
\begin{equation*}
 I_{K+C} (\nu) - I_{K+C} (\mu) = I_{K+C} (\nu - \mu) = I_K (\nu- \mu) \ge 0.
\end{equation*}
Therefore, $I_{K+C} (\nu) \ge 1 >0$. 

Now consider an arbitrary measure $\gamma \in \mathcal M (\Omega)$. If $\gamma (\Omega ) = 0$, then $I_{K+C} (\gamma) \ge 0$ by conditional positive definiteness  (and  $I_{K+C} (\gamma) > 0$ for $\gamma \neq 0$ for the ``strict'' version if $\gamma \neq 0$). 

If  $ \gamma (\Omega ) = c \neq 0 $, we can write  $\gamma  = c \nu$ for some  $\nu \in  \widetilde{\mathbb{P}}(\Omega)$. Therefore,  $I_{K+C} (\gamma) = c^2 I_{K+C} (\nu ) \ge c^2 >0$. Hence $K+C$ is  (strictly) positive definite. 
\end{proof}

Lemmas \ref {lem:PD-CPD} and \ref{lem:CPDtoPDC} together   show that  in the presence of a $K$-invariant measure, conditional positive definiteness and positive definiteness modulo an additive constant are equivalent notions. This is the case, for example, for rotationally invariant kernels $K$ on the sphere, since the uniform surface measure $\sigma$ is $K$-invariant for all such kernels.

\subsection{Local and Global Minimizers}\label{sec:LocGlob}
Under some additional assumptions, local minimizers of $I_K$ are necessarily global minimizers. Some facts  of this type have been observed in the  papers of  the authors with D.~ Ferizovi\'c, A.~Glazyrin, J.~Park \cites{BFGMPV, BGMPV}.  Here we prove a variety of more general statements with the same flavor.

In this subsection, the words ``local minimizer''  in the assumptions mean  ``directional local minimizer'' in the sense of Definition \ref{def:locmin}. Thus these results are also valid for local minimizers in  total variation  or in the Wasserstein distance $W_p$ with $1\le p< \infty$, according to \eqref{eq:LocMinRel}.

\begin{proposition}\label{prop:LocalMinGlobalMin}\label{prop:CPDLocalMinGlobalMin} 
Suppose that $\mu$ is a local minimizer of $I_K$ and any of the following two conditions holds:
\begin{enumerate}
\item\label{lg2} the measure $\mu$ is $K$-invariant, i.e. $U_K^{\mu}(x) = I_K (\mu) $ for  all  $x\in \Omega$;
\item\label{lg3}  $K$ is conditionally positive definite. 
\end{enumerate}
Then $\mu$ is a global minimizer of $I_K$.
\end{proposition}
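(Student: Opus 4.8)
The plan is to handle the two cases separately, though both reduce to the crucial identity \eqref{eq:Linear1} and its sub-linear companion \eqref{eq:subLinear} from Lemma \ref{lem:NuMinusMu}, combined with the fact (Corollary \ref{cor:Constant on Supp}) that a directional local minimizer $\mu$ satisfies $U_K^\mu \geq I_K(\mu)$ on $\Omega$ with equality on $\operatorname{supp}(\mu)$.

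\medskip

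\noindent\textbf{Case \eqref{lg2}: $\mu$ is $K$-invariant.} Here I would argue as follows. Since $\mu$ is a directional local minimizer, for each $\nu \in \mathbb P(\Omega)$ there is $\tau_\nu \in (0,1]$ with $I_K((1-t)\mu + t\nu) \geq I_K(\mu)$ for $t \in [0,\tau_\nu]$. Expanding the quadratic,
\[
I_K((1-t)\mu + t\nu) = I_K(\mu) + 2t\big(I_K(\mu,\nu) - I_K(\mu)\big) + t^2\big(I_K(\mu) - 2I_K(\mu,\nu) + I_K(\nu)\big),
\]
but by $K$-invariance \eqref{eq:MixedInvariant} gives $I_K(\mu,\nu) = I_K(\mu)$, so the linear term vanishes and the expression equals $I_K(\mu) + t^2 I_K(\nu-\mu)$. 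Local minimality for small $t>0$ forces $I_K(\nu-\mu) \geq 0$, and then \eqref{eq:Linear1} yields $I_K(\nu) - I_K(\mu) = I_K(\nu-\mu) \geq 0$ for every $\nu \in \mathbb P(\Omega)$, i.e.\ $\mu$ is a global minimizer. (In fact this shows $\mu$ minimizes over all of $\widetilde{\mathbb P}(\Omega)$.)

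\medskip

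\noindent\textbf{Case \eqref{lg3}: $K$ is conditionally positive definite.} Now I would not assume full support. By Corollary \ref{cor:Constant on Supp}, $\mu$ satisfies $U_K^\mu(x) \geq I_K(\mu)$ on $\Omega$ with equality on $\operatorname{supp}(\mu)$; this is exactly the hypothesis of the second part of Lemma \ref{lem:NuMinusMu}, so \eqref{eq:subLinear} gives $I_K(\nu-\mu) \leq I_K(\nu) - I_K(\mu)$ for all $\nu \in \mathbb P(\Omega)$. Since $\nu - \mu \in \mathcal Z(\Omega)$ and $K$ is conditionally positive definite, $I_K(\nu-\mu) \geq 0$, hence $I_K(\nu) \geq I_K(\mu)$ for all $\nu \in \mathbb P(\Omega)$, so $\mu$ is a global minimizer.

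\medskip

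I do not anticipate a serious obstacle: the machinery is all in place. The one point requiring a little care is making sure the directional (Gateaux-type) notion of local minimizer is enough — that is, checking that in Case \eqref{lg2} the expansion argument only needs perturbations along segments $(1-t)\mu + t\nu$, which it does, and that in Case \eqref{lg3} Corollary \ref{cor:Constant on Supp} indeed applies to directional local minimizers (it does, by the remark following it). A secondary subtlety is that in Case \eqref{lg3} we genuinely use the \emph{inequality} version \eqref{eq:subLinear} rather than \eqref{eq:Linear1}, since $\mu$ need not have full support and hence need not be $K$-invariant; but conditional positive definiteness still closes the argument because $\nu - \mu$ has mean zero.
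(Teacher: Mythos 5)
Your proof is correct and follows essentially the same route as the paper: case \eqref{lg2} is the identity \eqref{eq:Linear1} applied along the segment $(1-t)\mu+t\nu$ (your explicit quadratic expansion is just that identity unwound), and case \eqref{lg3} is exactly the paper's use of \eqref{eq:subLinear} via Corollary \ref{cor:Constant on Supp}. One caveat: your parenthetical in case \eqref{lg2} claiming that $\mu$ in fact minimizes over all of $\widetilde{\mathbb P}(\Omega)$ is not justified by the argument — directional local minimality only yields $I_K(\nu-\mu)\ge 0$ for $\nu\in\mathbb P(\Omega)$, and minimality over $\widetilde{\mathbb P}(\Omega)$ is equivalent (by Theorem \ref{thm:CPDequivMIN}) to conditional positive definiteness of $K$, which does not follow here without the full-support assumption of Proposition \ref{prop:CPDSupportLocalMinGlobalMin}.
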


Recall that, according to Corollary \ref{cor:Constant on Supp}, condition \eqref{lg2} is automatically satisfied  if $\mu$ has full support (and so is condition \eqref{lg3}, due to Lemmas \ref{lem:PD-CPD} and  \ref{lem:PosDefonSupp}). Notice also that, unlike Corollary \ref{cor:CPDequivMIN}, part \eqref{lg3} does not require $K$-invariance of $\mu$, but assumes instead that $\mu$ is a local minimizer.

\begin{proof}
    Assume that \eqref{lg2} holds. Then for any $\nu \in  \mathbb{P}(\Omega)$, since 
    $\mu$ is a local minimizer, applying equation \eqref{eq:Linear1} of Lemma \ref{lem:NuMinusMu}  for small $t>0$ gives
    \begin{align*}
        0 \le I_K( (1-t) \mu + t \nu) - I_K(\mu) & = I_K \big(  t (\nu -\mu ) \big) = t^2 I_K (\nu - \mu ) = t^2 \Big(I_K (\nu) - I_K (\mu) \Big).
    \end{align*}
    Thus  $I_K(\mu) \leq I_K(\nu)$, i.e. $\mu$ is a global minimizer of $I_K$ in $ \mathbb{P}(\Omega)$.

    Now assume that   \eqref{lg3} holds. Then  for each $\nu \in  \mathbb{P}(\Omega)$, the measure $\nu-\mu$ has total mass zero. According to conditional positive definiteness of $K$ and  inequality \eqref{eq:LMsublinear} of Corollary \ref{prop:BoundonNuMinusMu},
    \begin{equation}
        I_K(\nu) - I_K(\mu) \geq I_K(\nu - \mu) \ge 0.
    \end{equation}
    Therefore, in this case, $\mu$ also minimizes $I_K$.  
\end{proof}

Observe that if $\mu$ is $K$-invariant, then, according to part \eqref{lg2} of Proposition \ref{prop:LocalMinGlobalMin}, the implications of \eqref{eq:LocMinRel} may be reversed, i.e.
 three definitions of local minimizers (directional, in  total variation, and  in the Wasserstein distance $W_p$, $1\le p< \infty$) are equivalent, since in either of the cases the minimizer is necessarily global. 

\begin{corollary}\label{cor:LocMinEquiv}
Assume that $\mu \in \mathbb P (\Omega)$ is $K$-invariant. Then the following conditions are equivalent:
\begin{itemize}
\item $\mu$ is a directional local minimizer of $I_K$;
\item $\mu$ is local minimizer of $I_K$ with respect to the total variation norm;
\item $\mu$ is local minimizer of $I_K$ with respect to the Wasserstein distance $W_p$, $1\le p< \infty$ (if $\Omega$ is a metric space);
\item $\mu$ is a global minimizer of $I_K$ on $\mathbb P(\Omega)$. 
\end{itemize}
\end{corollary}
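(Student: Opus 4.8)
The plan is to close a short cycle of implications among the four conditions, most of the pieces of which are already in place. Recall from \eqref{eq:LocMinRel} that, with no assumption on $\mu$ whatsoever, being a local minimizer of $I_K$ in the Wasserstein metric $W_p$ ($1\le p<\infty$) implies being a local minimizer in total variation, which in turn implies being a directional local minimizer. So of the four properties in the statement, ``directional local minimizer'' is the weakest, and ``global minimizer'' is the strongest in the sense that it trivially implies all the others. It therefore suffices to show that, under the $K$-invariance hypothesis, the weakest already implies the strongest.

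First I would invoke Proposition \ref{prop:LocalMinGlobalMin}: since $\mu$ is assumed to be $K$-invariant, part \eqref{lg2} of that proposition states precisely that if $\mu$ is a directional local minimizer of $I_K$, then $\mu$ is a global minimizer of $I_K$ on $\mathbb P(\Omega)$. This is the one substantive step, and it is exactly the short computation there that uses the linearization identity \eqref{eq:Linear1}; everything else in the present corollary is bookkeeping.

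Next I would record the trivial converse directions: a global minimizer $\mu$ of $I_K$ over $\mathbb P(\Omega)$ is automatically a local minimizer of $I_K$ in any topology on $\mathcal M(\Omega)$ whose restriction to $\mathbb P(\Omega)$ is weaker than or equal to the relevant one — in particular in total variation and, when $\Omega$ is a metric space, in $W_p$ — and it is a directional local minimizer because $(1-t)\mu+t\nu \in \mathbb P(\Omega)$ for every $\nu\in\mathbb P(\Omega)$ and every $t\in[0,1]$, so $I_K((1-t)\mu+t\nu)\ge I_K(\mu)$ holds for all such $t$. Assembling the pieces yields the cycle: directional local minimizer $\Rightarrow$ (Proposition \ref{prop:LocalMinGlobalMin}\eqref{lg2}) global minimizer $\Rightarrow$ (trivially) local minimizer in $W_p$ $\Rightarrow$ (by \eqref{eq:LocMinRel}) local minimizer in total variation $\Rightarrow$ (by \eqref{eq:LocMinRel}) directional local minimizer, which establishes the equivalence of all four conditions.

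I do not anticipate any genuine obstacle: the only nontrivial input, the passage from directional local minimality to global minimality in the presence of a $K$-invariant minimizer, has already been carried out in Proposition \ref{prop:LocalMinGlobalMin} via identity \eqref{eq:Linear1}, and the remaining implications are the elementary comparisons between the total variation and Wasserstein topologies summarized in \eqref{eq:LocMinRel} together with the observation that a global minimizer is a local minimizer in every reasonable sense.
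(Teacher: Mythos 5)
Your proposal is correct and follows essentially the same route as the paper: the chain \eqref{eq:LocMinRel} handles the implications among the three notions of local minimizer, Proposition \ref{prop:LocalMinGlobalMin}\eqref{lg2} (via identity \eqref{eq:Linear1}) upgrades a directional local minimizer of a $K$-invariant measure to a global one, and the converse directions are trivial. Nothing to add.
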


When $\mu$ is a local minimizer with full support,  both conditions \eqref{lg2} and \eqref{lg3} of Proposition \ref{prop:LocalMinGlobalMin} hold simultaneously, and an even stronger conclusion can be drawn.

\begin{proposition} \label{prop:CPDSupportLocalMinGlobalMin}
Let $\mu$ be a local minimizer of $I_K$ with $\operatorname{supp} (\mu) = \Omega$. 
Then $\mu$ is a global minimizer of $I_K$ over $\widetilde{\mathbb{P}}(\Omega)$, the set of all signed Borel measures with total mass one.
\end{proposition}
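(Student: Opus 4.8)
The plan is to combine the two conclusions already available from Proposition~\ref{prop:LocalMinGlobalMin}. Since $\mu$ is a local minimizer with $\operatorname{supp}(\mu)=\Omega$, Corollary~\ref{cor:Constant on Supp} tells us that $U_K^\mu$ is constant on $\Omega$, so $\mu$ is $K$-invariant; moreover, by Lemma~\ref{lem:PosDefonSupp} the kernel $K$ is positive definite modulo a constant on $\operatorname{supp}(\mu)=\Omega$, and hence, by Lemma~\ref{lem:PD-CPD}, $K$ is conditionally positive definite on $\Omega$. Thus both hypotheses \eqref{lg2} and \eqref{lg3} of Proposition~\ref{prop:LocalMinGlobalMin} are in force. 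The point is that, once we know $K$ is conditionally positive definite \emph{and} $\mu$ is $K$-invariant, Theorem~\ref{thm:CPDequivMIN} immediately gives that $\mu$ minimizes $I_K$ over all of $\widetilde{\mathbb P}(\Omega)$, which is exactly the desired conclusion.

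Concretely, I would argue as follows. First, invoke Corollary~\ref{cor:Constant on Supp}: as $\mu$ is a (directional) local minimizer, $U_K^\mu(x)=I_K(\mu)$ on $\operatorname{supp}(\mu)$, and since $\operatorname{supp}(\mu)=\Omega$ this means $U_K^\mu\equiv I_K(\mu)$ on $\Omega$, i.e. $\mu$ is $K$-invariant. Second, apply Lemma~\ref{lem:PosDefonSupp}: because $\mu$ is a local minimizer with full support, $K$ is positive definite up to an additive constant on $\operatorname{supp}(\mu)\times\operatorname{supp}(\mu)=\Omega\times\Omega$, and then Lemma~\ref{lem:PD-CPD} yields that $K$ is conditionally positive definite. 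Third, with $\mu$ being $K$-invariant and $K$ conditionally positive definite, Theorem~\ref{thm:CPDequivMIN} asserts precisely that $\mu$ minimizes $I_K$ over $\widetilde{\mathbb P}(\Omega)$, completing the proof.

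Alternatively—and this is perhaps more self-contained—one could bypass Theorem~\ref{thm:CPDequivMIN} and argue directly from the crucial identity. Having established $K$-invariance of $\mu$, Lemma~\ref{lem:NuMinusMu} gives $I_K(\nu-\mu)=I_K(\nu)-I_K(\mu)$ for every $\nu\in\widetilde{\mathbb P}(\Omega)$. Having established conditional positive definiteness of $K$, and since $(\nu-\mu)(\Omega)=0$, we get $I_K(\nu-\mu)\ge 0$, hence $I_K(\nu)\ge I_K(\mu)$ for all $\nu\in\widetilde{\mathbb P}(\Omega)$.

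There is no real obstacle here: the statement is essentially a corollary obtained by feeding the full-support hypothesis into the machinery of Proposition~\ref{prop:LocalMinGlobalMin}, Lemma~\ref{lem:PosDefonSupp}, and Corollary~\ref{cor:Constant on Supp}. The only mild subtlety worth flagging is making sure the local-minimizer hypothesis is used consistently in the weakest (directional) sense so that Corollary~\ref{cor:Constant on Supp} and Lemma~\ref{lem:PosDefonSupp} both apply, but this is already covered by the conventions set up around \eqref{eq:LocMinRel}.
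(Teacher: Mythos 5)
Your proof is correct and follows essentially the same route as the paper: deduce $K$-invariance from Corollary~\ref{cor:Constant on Supp}, conditional positive definiteness from Lemmas~\ref{lem:PosDefonSupp} and~\ref{lem:PD-CPD}, and then conclude via Theorem~\ref{thm:CPDequivMIN}. The alternative direct argument you sketch via identity~\eqref{eq:Linear1} is just the internal content of Theorem~\ref{thm:CPDequivMIN}, so it is not a genuinely different approach.
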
 

\begin{proof} As discussed above, if $\mu$ is a local minimizer with full support, then both $K$-invariance of $\mu$ and conditional positive definiteness of $K$ immediately  follow,  from Corollary \ref{cor:Constant on Supp} and Lemmas \ref{lem:PD-CPD} and  \ref{lem:PosDefonSupp}, respectively. Theorem \ref{thm:CPDequivMIN} then shows that $\mu$ minimizes $I_K$ over $\widetilde{\mathbb P} (\Omega)$. 
\end{proof}

Finally, another version of the local-to-global minimization principle may be proved under the assumption that $I_K$ is convex at $\mu$, which according to Lemma  \ref{lem:ConvexEqual2}, is equivalent to the fact that the arithmetic mean inequality \eqref{eq:AM2} holds for any measure $\nu \in \mathbb{P}(\Omega)$.  We have the following statement.

\begin{theorem}\label{thm:CPDPotConstMin}
Suppose that $K$ is a kernel on $\Omega \times \Omega$ and that for some $\mu \in \mathbb{P}(\Omega)$  there exists a constant $M \in \mathbb{R}$ such that $U_K^{\mu}(x) \geq M$, with equality on $\operatorname{supp}(\mu)$. If $I_K$ is convex at $\mu$, then $\mu$ is a global  minimizer of $I_K$ over $\mathbb{P}(\Omega)$. 
\end{theorem}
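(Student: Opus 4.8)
\emph{Proof proposal.} The plan is to extract an arithmetic–mean inequality at $\mu$ from the convexity hypothesis, and a one-sided bound on the mixed energy $I_K(\mu,\nu)$ from the potential hypothesis; the two bounds then immediately sandwich $I_K(\mu)$ below $I_K(\nu)$.

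First I would do the bookkeeping that identifies the constant $M$. Integrating the hypothesis $U_K^{\mu} = M$ on $\operatorname{supp}(\mu)$ against $\mu$ yields
$$ I_K(\mu) = \int_\Omega U_K^{\mu}(x)\, d\mu(x) = \int_{\operatorname{supp}(\mu)} M \, d\mu(x) = M, $$
so the assumption really says $U_K^{\mu}(x) \geq I_K(\mu)$ for every $x \in \Omega$. Consequently, for any $\nu \in \mathbb{P}(\Omega)$,
$$ I_K(\mu,\nu) = \int_\Omega U_K^{\mu}(x)\, d\nu(x) \geq \int_\Omega I_K(\mu)\, d\nu(x) = I_K(\mu), $$
since $\nu$ is a probability measure.

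Next I would invoke the convexity assumption. By Lemma~\ref{lem:ConvexEqual2}, $I_K$ being convex at $\mu$ is equivalent to the arithmetic–mean inequality
$$ I_K(\mu,\nu) \leq \tfrac12\big( I_K(\mu) + I_K(\nu) \big) $$
holding for all $\nu \in \mathbb{P}(\Omega)$. Combining this with the lower bound $I_K(\mu,\nu) \geq I_K(\mu)$ from the previous step gives $I_K(\mu) \leq \tfrac12\big(I_K(\mu) + I_K(\nu)\big)$, i.e. $I_K(\mu) \leq I_K(\nu)$ for every $\nu \in \mathbb{P}(\Omega)$, which is precisely the claim.

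I do not expect a genuine obstacle here: both ingredients are already available in the excerpt — Lemma~\ref{lem:ConvexEqual2} converts convexity at $\mu$ into the mixed-energy inequality, and the potential bound is elementary. (Alternatively, one could route through Lemma~\ref{lem:NuMinusMu}: its second statement gives $I_K(\nu-\mu) \leq I_K(\nu) - I_K(\mu)$ under exactly the present potential hypothesis, while convexity at $\mu$ forces $I_K(\nu-\mu) \geq 0$ via the expansion $I_K(\nu-\mu) = I_K(\nu) - 2I_K(\mu,\nu) + I_K(\mu)$; chaining the two again yields $I_K(\nu) \geq I_K(\mu)$.) The only point requiring a little care is the identification $M = I_K(\mu)$, without which the two inequalities would not line up.
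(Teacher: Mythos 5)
Your proposal is correct and follows essentially the same route as the paper's proof: identify $M = I_K(\mu)$ by integrating the potential against $\mu$, derive $I_K(\mu,\nu) \geq I_K(\mu)$ from the potential bound, and combine with the arithmetic-mean inequality supplied by Lemma~\ref{lem:ConvexEqual2}. The parenthetical alternative via Lemma~\ref{lem:NuMinusMu} is also valid but not needed.
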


Before proving this statement we observe that its first assumption is  satisfied in either of the  following two cases:  (a) if $\mu$ is a local minimizer, according to Corollary \ref{cor:Constant on Supp}; (b) if $\mu$ is $K$-invariant.  
Thus we have  two immediate corollaries. The first one is the aforementioned local-to-global principle.

\begin{corollary}\label{cor:LocGlob3}
Let $K$ be a kernel on $\Omega \times \Omega$ and let $\mu \in \mathbb{P}(\Omega)$ be a local minimizer of $I_K$.  If $I_K$ is convex at $\mu$, then $\mu$ is a global minimizer of $I_K$ over $\mathbb{P}(\Omega)$. 
\end{corollary}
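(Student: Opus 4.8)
The plan is to combine the hypothesis that $U_K^\mu(x) \geq M$ on $\Omega$ with equality on $\operatorname{supp}(\mu)$ with the convexity of $I_K$ at $\mu$, using the key inequality \eqref{eq:subLinear} of Lemma \ref{lem:NuMinusMu}. First I would fix an arbitrary $\nu \in \mathbb{P}(\Omega)$ and aim to show $I_K(\mu) \leq I_K(\nu)$. Since the assumption $U_K^\mu(x) \geq M$ with equality on $\operatorname{supp}(\mu)$ is exactly the hypothesis of the second part of Lemma \ref{lem:NuMinusMu} (here $M = I_K(\mu)$, because integrating $U_K^\mu = M$ against $\mu$ over $\operatorname{supp}(\mu)$ gives $I_K(\mu) = M$), we immediately get the subadditivity inequality
\begin{equation*}
I_K(\nu - \mu) \leq I_K(\nu) - I_K(\mu).
\end{equation*}
So it suffices to show that $I_K(\nu - \mu) \geq 0$, i.e. that $I_K$ applied to this particular mean-zero measure is nonnegative.

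The second step is to extract nonnegativity of $I_K(\nu - \mu)$ from convexity of $I_K$ at $\mu$. By Lemma \ref{lem:ConvexEqual2}, convexity at $\mu$ is equivalent to the arithmetic-mean inequality $I_K(\mu,\nu) \leq \frac12(I_K(\mu) + I_K(\nu))$ for all $\nu \in \mathbb{P}(\Omega)$. Expanding $I_K(\nu - \mu) = I_K(\nu) - 2 I_K(\mu,\nu) + I_K(\mu)$ and substituting the arithmetic-mean bound on $I_K(\mu,\nu)$ gives
\begin{equation*}
I_K(\nu - \mu) = I_K(\nu) - 2 I_K(\mu,\nu) + I_K(\mu) \geq I_K(\nu) - \big(I_K(\mu) + I_K(\nu)\big) + I_K(\mu) = 0,
\end{equation*}
as desired. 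Chaining the two displays yields $I_K(\mu) \leq I_K(\nu)$ for every $\nu \in \mathbb{P}(\Omega)$, which is precisely the assertion that $\mu$ is a global minimizer over $\mathbb{P}(\Omega)$.

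I do not anticipate a serious obstacle here: the proof is essentially a two-line combination of Lemma \ref{lem:NuMinusMu} (second part) and Lemma \ref{lem:ConvexEqual2}, once one recognizes that $M$ must equal $I_K(\mu)$. The only point requiring a small amount of care is verifying that identification of the constant, and making sure the convexity-at-$\mu$ hypothesis is invoked in its equivalent arithmetic-mean form rather than the original definition with small $t$ — though one could equally run the argument directly from \eqref{eq:2inputConvexDef}, noting that for small $t$ one has $I_K((1-t)\mu + t\nu) - I_K(\mu) = t^2 I_K(\nu - \mu) + t\big(I_K(\nu) - I_K(\mu)\big) \cdot 0 + \dots$; it is cleaner to go through Lemma \ref{lem:ConvexEqual2}. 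One should also note that no $K$-invariance or positive definiteness of $K$ is assumed globally — only the local behavior of the potential at $\mu$ together with local convexity — which is what makes the statement slightly stronger than the corollaries that follow it.
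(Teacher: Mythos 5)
Your argument is correct and is essentially the paper's proof of Theorem \ref{thm:CPDPotConstMin} (from which the corollary is deduced): unwinding your chain $0 \le I_K(\nu-\mu) \le I_K(\nu)-I_K(\mu)$ via $I_K(\nu-\mu)=I_K(\nu)-2I_K(\mu,\nu)+I_K(\mu)$ shows it uses exactly the same two inequalities, $I_K(\mu)\le I_K(\mu,\nu)$ from the potential condition and $I_K(\mu,\nu)\le \frac12\big(I_K(\mu)+I_K(\nu)\big)$ from Lemma \ref{lem:ConvexEqual2}, merely repackaged through the intermediate quantity $I_K(\nu-\mu)$. The one step to make explicit is the appeal to Corollary \ref{cor:Constant on Supp}: the corollary's actual hypothesis is that $\mu$ is a local minimizer, and it is that result which supplies the condition $U_K^{\mu}\ge I_K(\mu)$ on $\Omega$ with equality on $\operatorname{supp}(\mu)$ that you take as your starting point.
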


Observe that, if convexity at $\mu$ were replaced with convexity of $I_K$ on $\mathbb{P}(\Omega)$, then in view of Proposition \ref{prop:ConvexCPDEqual2}, this would be equivalent to the conditional positive definiteness of $K$. Thus,  Corollary \ref{cor:LocGlob3} recovers and strengthens part \eqref{lg3} of  Proposition \ref{prop:LocalMinGlobalMin}.  

The second corollary will provide a crucial implication in Theorem \ref{thm:CPDEquivalences}.
\begin{corollary}\label{cor:ConvMuGlobMin}
Let $K$ be a kernel on $\Omega \times \Omega$ and let $\mu \in \mathbb{P}(\Omega)$ be a $K$-invariant measure.  If $I_K$ is convex at $\mu$, then $\mu$ is a global minimizer of $I_K$ over $\mathbb{P}(\Omega)$. 
\end{corollary}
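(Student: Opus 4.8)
The plan is to obtain this as an immediate consequence of Theorem~\ref{thm:CPDPotConstMin}. First I would observe that $K$-invariance of $\mu$ means precisely $U_K^\mu(x) = I_K(\mu)$ for \emph{every} $x \in \Omega$; hence, setting $M := I_K(\mu)$, the bound $U_K^\mu(x) \geq M$ holds on all of $\Omega$ with equality everywhere — in particular with equality on $\operatorname{supp}(\mu)$. Thus the first hypothesis of Theorem~\ref{thm:CPDPotConstMin} is satisfied, and since we are also assuming that $I_K$ is convex at $\mu$, that theorem yields at once that $\mu$ is a global minimizer of $I_K$ over $\mathbb{P}(\Omega)$. (Indeed, the remark immediately preceding the corollary already records that $K$-invariance is case (b) under which the hypothesis of Theorem~\ref{thm:CPDPotConstMin} holds.)

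For readers who want a direct argument not routed through Theorem~\ref{thm:CPDPotConstMin}, I would instead combine the crucial identity with Lemma~\ref{lem:ConvexEqual2}. By that lemma, convexity of $I_K$ at $\mu$ is equivalent to the arithmetic-mean inequality
\[
I_K(\mu,\nu) \leq \tfrac12\big(I_K(\mu) + I_K(\nu)\big) \qquad \text{for all } \nu \in \mathbb{P}(\Omega).
\]
On the other hand, $K$-invariance of $\mu$ gives, exactly as in \eqref{eq:MixedInvariant}, that $I_K(\mu,\nu) = \int_\Omega U_K^\mu\, d\nu = I_K(\mu)$ for every $\nu \in \mathbb{P}(\Omega)$. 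Substituting this into the displayed inequality and cancelling $\tfrac12 I_K(\mu)$ from both sides leaves $I_K(\mu) \leq I_K(\nu)$; since $\nu \in \mathbb{P}(\Omega)$ was arbitrary, $\mu$ is a global minimizer. (One could equivalently run the quadratic-in-$t$ computation directly: since $I_K(\mu,\nu-\mu)=0$ for the mean-zero measure $\nu-\mu$, one gets $I_K((1-t)\mu+t\nu)=I_K(\mu)+t^2\big(I_K(\nu)-I_K(\mu)\big)$, and comparing with the convexity bound $(1-t)I_K(\mu)+tI_K(\nu)$ forces $I_K(\nu)\geq I_K(\mu)$.)

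There is no real obstacle here: the content sits in Theorem~\ref{thm:CPDPotConstMin} (or, in the alternative route, in Lemma~\ref{lem:ConvexEqual2} together with \eqref{eq:MixedInvariant}). The only point that needs a moment's care is that ``convex at $\mu$'' is the weak, directional notion of Definition~\ref{def:convexity}, so a priori it only controls $I_K((1-t)\mu + t\nu)$ for $t$ in a $\nu$-dependent neighborhood of $0$; what makes the argument go through is that Lemma~\ref{lem:ConvexEqual2} upgrades this local information to the arithmetic-mean inequality valid for all of $\mathbb{P}(\Omega)$ (by dividing out the factor $t(1-t)$), which is exactly the form needed above.
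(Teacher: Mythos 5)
Your proposal is correct and matches the paper's own argument: the corollary is obtained by noting that $K$-invariance supplies the potential hypothesis of Theorem~\ref{thm:CPDPotConstMin} with $M = I_K(\mu)$ (this is exactly case (b) of the remark preceding the corollary). Your alternative direct argument is also fine, but it simply inlines the proof of Theorem~\ref{thm:CPDPotConstMin}, so it is not a genuinely different route.
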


\begin{proof}[Proof of Theorem \ref{thm:CPDPotConstMin}]
Observe first that the constant $M$ is necessarily equal to $I_K (\mu)$:
$$ I_K (\mu) =  \int\limits_{\Omega} U_K^\mu(x)\, d \mu(x) =  \int\limits_{\operatorname{supp} (\mu) } M d \mu(x) = M.$$
For any $\nu \in \mathbb{P}(\Omega)$, we have that
\begin{equation*}
I_K(\mu, \nu) = \int\limits_{\Omega} U_K^\mu(x)\, d \nu(x) \geq 
I_K(\mu).
\end{equation*}
Convexity of  $I_K$   at $\mu$,  according to Lemma \ref{lem:ConvexEqual2}, is equivalent to the arithmetic mean inequality \eqref{eq:AM2}. Thus,
\begin{equation*}
I_K (\mu) \leq I_K(\mu, \nu) \leq \frac{1}{2} I_K(\mu) + \frac{1}{2} I_K(\nu),
\end{equation*}
so $I_K(\nu) \geq  I_K(\mu)$.
\end{proof}

\section{Invariant Measures and Minimizers with Full Support}\label{sec:Energy Basics}

It is now time to summarize the results of the previous sections. It may not be yet obvious, but we have proven (sometimes quite surprising) equivalences between many different notions.
We shall restrict our attention to the case when the measure $\mu$ is $K$-invariant (i.e. has constant potential) and has full support. As we have discussed before, these conditions are satisfied by many natural candidates (the uniform measure on the sphere or other two-point homogeneous spaces, the Haar measure on a compact topological group, etc). Though a majority of the implications are valid even just for $K$-invariant measures without the full support assumption, assuming that $\mu$ has full support truly ties the picture together. We shall carefully trace which of the conclusions require this condition.

We start with the following long list of equivalences.

\begin{theorem}\label{thm:CPDEquivalences}
Let  $K$ be a kernel on $\Omega \times \Omega$. Assume that there exists a measure  $\mu \in \mathbb{P}(\Omega)$, which is $K$-invariant and has full support, i.e.  $U_K^{\mu}(x) = I_K(\mu)$ for all $x\in \Omega$ and $\operatorname{supp}(\mu) = \Omega$. 

Then the following conditions are equivalent:

\begin{enumerate}

\item \label{P1} $K$ positive definite modulo a constant.

\item \label{P2} $K$ is conditionally positive definite.

\item \label{P3} $\mu$ is a local minimizer of $I_K$.

\item \label{P4} $\mu$ is a global minimizer of $I_K$ over $\mathbb{P}(\Omega)$.

\item \label{P5} $\mu$ is a global minimizer of $I_K$ over $\widetilde{\mathbb{P}}(\Omega)$.

\item \label{P6} $I_K$ is convex on $\mathbb P (\Omega)$ (or, equivalently, on $\widetilde{\mathbb{P}}(\Omega)$).

\item \label{P7} $I_K$ is convex at $\mu$.

\item \label{P8} The arithmetic mean inequality \eqref{eq:2inputAInequality} holds for all $\mu_1, \mu_2 \in \mathbb{P}(\Omega)$ (or, equivalently for all $\mu_1, \mu_2 \in \widetilde{\mathbb{P}}(\Omega)$).

\item \label{P9} The arithmetic mean inequality \eqref{eq:2inputAInequality} holds when $\mu_1 = \mu$.

\item \label{P10} The kernel $K$ can be represented as
$$ K(x,y) = \sum_{j = 1}^{\dim( L^2(\Omega, \mu))}
 \lambda_j \phi_j(x) \phi_j(y)$$
where the series converges uniformly and absolutely,    the function $\phi_1$ is constant, and $\lambda_j \geq 0$ for $j \geq 2$.

\end{enumerate}
\end{theorem}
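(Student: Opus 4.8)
## Proof Strategy

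The plan is to prove Theorem~\ref{thm:CPDEquivalences} not by establishing each pairwise equivalence separately, but by arranging the ten conditions into a cycle of implications, exploiting heavily the machinery already assembled in Sections~\ref{sec:PosDefKer} and~\ref{sec:InvMeasEnergy}. Many of the needed links are already proven in full generality (without the full-support hypothesis), so the bulk of the work is recognizing which prior result closes each arrow. A convenient ordering is
$$ \eqref{P1} \Rightarrow \eqref{P2} \Rightarrow \eqref{P5} \Rightarrow \eqref{P4} \Rightarrow \eqref{P3} \Rightarrow \eqref{P2}, $$
together with the ``convexity cluster'' $\eqref{P2} \Leftrightarrow \eqref{P6} \Leftrightarrow \eqref{P8}$ and $\eqref{P2}\Leftrightarrow\eqref{P7}\Leftrightarrow\eqref{P9}$, and finally a separate argument for $\eqref{P10}$.

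First I would dispatch the classical core. The implication $\eqref{P1}\Rightarrow\eqref{P2}$ is exactly Lemma~\ref{lem:PD-CPD}. The implication $\eqref{P2}\Rightarrow\eqref{P5}$ (and its strict refinement) is Theorem~\ref{thm:CPDequivMIN}, using that $\mu$ is $K$-invariant; $\eqref{P5}\Rightarrow\eqref{P4}$ is trivial since $\mathbb{P}(\Omega)\subset\widetilde{\mathbb{P}}(\Omega)$; $\eqref{P4}\Rightarrow\eqref{P3}$ is immediate since a global minimizer is a local one. The step $\eqref{P3}\Rightarrow\eqref{P2}$ closes the first loop: if $\mu$ is a local minimizer with full support, then Lemma~\ref{lem:PosDefonSupp} (together with Corollary~\ref{cor:Constant on Supp}, which already gives $K$-invariance) shows $K$ is positive definite modulo a constant on $\operatorname{supp}(\mu)=\Omega$, i.e.\ \eqref{P1}, whence \eqref{P2} by Lemma~\ref{lem:PD-CPD}. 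In fact this same observation gives $\eqref{P3}\Rightarrow\eqref{P1}$ directly, so the cycle through \eqref{P1}--\eqref{P5} is complete. Here the full-support hypothesis is used for the only time in an essential way: it is what upgrades ``positive definite on $\operatorname{supp}(\mu)$'' to ``positive definite on $\Omega$'' and simultaneously guarantees $K$-invariance of a local minimizer.

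Next I would attach the convexity and mixed-energy conditions. By Proposition~\ref{prop:ConvexCPDEqual2}, $\eqref{P2}\Leftrightarrow\eqref{P6}$, and the parenthetical equivalence of convexity on $\mathbb{P}(\Omega)$ and on $\widetilde{\mathbb{P}}(\Omega)$ is Corollary~\ref{c:convexequiv}. By Lemma~\ref{lem:BHS2inputAInequality}, $\eqref{P2}\Leftrightarrow\eqref{P8}$, including the $\mathbb{P}$ versus $\widetilde{\mathbb{P}}$ equivalence. For the ``local'' versions \eqref{P7} and \eqref{P9}: trivially $\eqref{P6}\Rightarrow\eqref{P7}$ and $\eqref{P8}\Rightarrow\eqref{P9}$, while Lemma~\ref{lem:ConvexEqual2} gives $\eqref{P7}\Leftrightarrow\eqref{P9}$ outright (convexity at $\mu$ is equivalent to the arithmetic-mean inequality with one slot fixed at $\mu$). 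The remaining arrow $\eqref{P9}\Rightarrow\eqref{P4}$, i.e.\ convexity at the $K$-invariant measure $\mu$ forces $\mu$ to be a global minimizer, is precisely Corollary~\ref{cor:ConvMuGlobMin}. This ties \eqref{P7} and \eqref{P9} back into the main cycle.

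Finally, condition \eqref{P10}. The direction $\eqref{P10}\Rightarrow\eqref{P2}$ is easy: a uniformly convergent sum $\sum_{j\ge 1}\lambda_j\phi_j(x)\phi_j(y)$ with $\lambda_1\phi_1^2$ the constant term and $\lambda_j\ge 0$ for $j\ge 2$ is positive definite modulo the constant $-\lambda_1\phi_1^2$ (or simply: subtract the constant term and apply Lemma~\ref{lem:PDconstructed}), giving \eqref{P1}, hence \eqref{P2}. For $\eqref{P2}\Rightarrow\eqref{P10}$, I would argue: by \eqref{P1}, $K+C$ is positive definite for suitable $C$; apply Mercer's Theorem (Theorem~\ref{thm:Mercer}) with respect to $\mu$, which has full support, to write $K+C = \sum_{j\ge 1}\lambda_j\phi_j(x)\phi_j(y)$ with $\lambda_j\ge 0$ and $\{\phi_j\}$ an orthonormal eigenbasis of $T_{K+C,\mu}$, converging absolutely and uniformly. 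The key point is that the constant function $\mathbbm{1}_\Omega$ is an eigenfunction of $T_{K+C,\mu}$: since $\mu$ is $K$-invariant it is $(K+C)$-invariant, so by the remark at the end of \S\ref{sec:InvMeasEnergy} (or \eqref{eq:oneeigen}), $\mathbbm{1}_\Omega$ is an eigenfunction with eigenvalue $I_{K+C}(\mu)\ge 0$. Normalizing, we may take $\phi_1 = \mathbbm{1}_\Omega$ (up to the $L^2(\mu)$ normalization constant), which is constant, with $\lambda_1 = I_{K+C}(\mu)\ge 0$; all other $\lambda_j\ge 0$ automatically. Now $K = (K+C) - C = \sum_{j\ge 1}\lambda_j\phi_j(x)\phi_j(y) - C$; absorbing $-C$ into the $j=1$ term (replacing $\lambda_1$ by $\lambda_1 - C/\phi_1^2$, which is still a constant times $\phi_1(x)\phi_1(y)$) yields exactly the representation in \eqref{P10} — note $\lambda_1$ need not be nonnegative after this absorption, which is why the statement only requires $\lambda_j\ge 0$ for $j\ge 2$. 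One subtlety to handle carefully: if an eigenvalue of $T_{K+C,\mu}$ is repeated, one must choose the eigenbasis within that eigenspace so that $\mathbbm{1}_\Omega$ is a basis vector; since $\mathbbm{1}_\Omega$ is genuinely an eigenfunction this is just Gram--Schmidt inside a finite-dimensional eigenspace, so it causes no real difficulty.

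The main obstacle — really the only nontrivial point beyond bookkeeping — is the construction of the eigenexpansion in \eqref{P10} with the constant function singled out as $\phi_1$, and in particular making sure the bookkeeping of the additive constant $C$ is consistent with the sign condition ``$\lambda_j\ge 0$ for $j\ge 2$'' rather than for all $j$. Everything else is an exercise in citing the correct earlier lemma.
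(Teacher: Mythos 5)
Your proposal is correct and follows essentially the same route as the paper: the same cycle through \eqref{P1}--\eqref{P5} closed by Lemma~\ref{lem:PosDefonSupp} (where full support is used), the same attachment of the convexity/mixed-energy conditions via Lemmas~\ref{lem:BHS2inputAInequality} and~\ref{lem:ConvexEqual2}, Proposition~\ref{prop:ConvexCPDEqual2}, and Corollary~\ref{cor:ConvMuGlobMin}, and the same Mercer-based argument for \eqref{P10}. Your handling of \eqref{P2}$\Rightarrow$\eqref{P10} is in fact slightly more explicit than the paper's about absorbing the additive constant into the $j=1$ term and why only $\lambda_j\ge 0$ for $j\ge 2$ survives, which is a welcome clarification rather than a deviation.
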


\begin{figure}[h]
\centering
\begin{tikzpicture}
    \def\arr{Stealth[length=2.5mm,inset=0.5mm]}
    \node[circle, radius = .5, draw] (a) at (90:3)  {1};
    \node[circle, radius = .5, draw] (b) at (54:3)  {2};
    \node[circle, radius = .5, draw] (c) at (18:3)  {3};
    \node[circle, radius = .5, draw] (d) at (342:3) {4};
    \node[circle, radius = .5, draw] (e) at (306:3) {5};
    \node[circle, radius = .5, draw] (f) at (270:3) {6};
    \node[circle, radius = .5, draw] (g) at (234:3) {7};
    \node[circle, radius = .5, draw] (h) at (198:3) {8};
    \node[circle, radius = .5, draw] (i) at (162:3) {9};
    \node[circle, radius = .5, draw] (j) at (126:3) {10};
    \draw[arrows={-\arr}     , black , solid ,very thick] (a) --(b);
    \draw[arrows={\arr-\arr} , black , solid ,very thick] (b)--(f);
    \draw[arrows={\arr-\arr} , black , solid ,very thick] (b)--(h);
    \draw[arrows={-\arr}     , black , solid ,very thick] (d)--(c);
    \draw[arrows={-\arr}     , black , solid ,very thick] (e)--(d);
    \draw[arrows={-\arr}     , black , solid ,very thick] (f)--(g);
    \draw[arrows={\arr-\arr} , black , solid ,very thick] (f)--(h);
    \draw[arrows={\arr-\arr} , black , solid ,very thick] (g)--(i);
    \draw[arrows={-\arr}     , black , solid ,very thick] (h)--(i);
    \draw[arrows={-\arr}     , black , solid ,very thick] (j)--(a);
    \draw[arrows={-\arr}, black      , solid, decorate,decoration={snake,amplitude=.4mm,segment length=2mm,post length=3mm}] (b) to[out=134, in=10] (a);
    \draw[arrows={\arr-\arr}, black  , solid, decorate,decoration={snake,amplitude=.4mm,segment length=2mm,pre length=3mm,post length=3mm}] (b)--(e);
    \draw[arrows={-\arr}, black      , solid, decorate,decoration={snake,amplitude=.4mm,segment length=2mm,post length=3mm}] (c) to[out=298, in=62](d);
    \draw[arrows={-\arr}, black      , solid, decorate,decoration={snake,amplitude=.4mm,segment length=2mm,post length=3mm}] (g)--(d);
    \draw[arrows={-\arr}, black , dashed, thick ] (a) to [out=170 , in=46] (j);
    \draw[arrows={-\arr}, black , dashed, thick ] (d)--(a);
\end{tikzpicture}
\caption{Implications in  the proof of  Theorem \ref{thm:CPDEquivalences}: solid arrows are implications that hold without additional assumptions; wavy arrows require $K$-invariance, but not full support; dashed arrows represent the implications which do require the assumption of full support.}\label{fig1}
\end{figure}
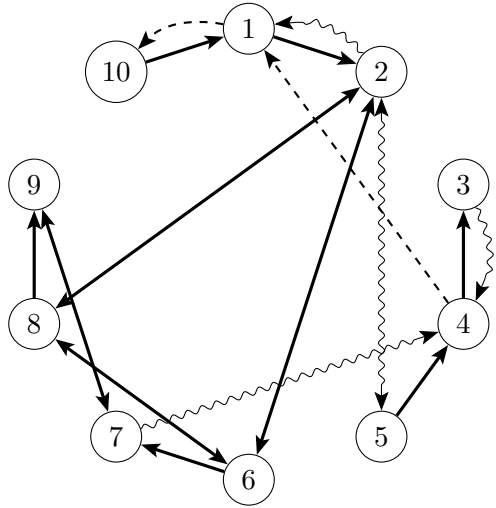



\begin{proof} For the reader's convenience the implications proving this theorem are summarized in Figure \ref{fig1}. 

We open with a list of  the implications that do not require any assumptions on $\mu$.   It is obvious that \eqref{P5} implies \eqref{P4}, which in turn implies \eqref{P3}. Also, \eqref{P6} implies \eqref{P7}, and similarly, \eqref{P8} implies \eqref{P9}.

The equivalence between \eqref{P2}, \eqref{P6}, and \eqref{P8} is proved in Lemmas \ref{lem:BHS2inputAInequality} and \ref{lem:ConvexEqual2} together with Proposition \ref{prop:ConvexCPDEqual2}.  Lemma \ref{lem:ConvexEqual2}  also establishes the equivalence between \eqref{P7} and \eqref{P9}.  Lemma \ref {lem:PD-CPD}   shows that  \eqref{P1} implies \eqref{P2}.

The following implications rely on the fact that $\mu$ is $K$-invariant, but do not require $\mu$ to have full support.  Lemma \ref{lem:CPDtoPDC} demonstrates that \eqref{P2} implies \eqref{P1}.  Theorem \ref{thm:CPDequivMIN} yields the equivalence of  \eqref{P2} and \eqref{P5}. Corollary \ref{cor:ConvMuGlobMin} 
shows that \eqref{P4} follows from \eqref{P7}.  Finally, part \eqref{lg2} of Proposition \ref{prop:LocalMinGlobalMin} guarantees that \eqref{P3} implies \eqref{P4}. \\

The equivalence between  \eqref{P1} and  \eqref{P10} is discussed in Lemma \ref{lem:PDconstructed} and Mercer's Theorem (see Theorem \ref{thm:Mercer}), or more specifically Corollary \ref{cor:PDCharacterization}. Condition \eqref{P1} implies \eqref{P10} due to the fact that  the constant function $\mathbbm 1_\Omega$  is an eigenfunction of the Hilbert--Schmidt operator $T_{K,\mu}$ in $L^2 (\Omega, \mu)$ for any $K$-invariant measure $\mu$, and  the expansion in Mercer's Theorem is valid on all of $\Omega$, since $\operatorname{supp} (\mu) = \Omega$. The implication \eqref{P10} $\Rightarrow$ \eqref{P1} holds without any additional assumptions, according to Lemma \ref{lem:PDconstructed}.

In conclusion, we observe that  Lemma \ref{lem:PosDefonSupp}  demonstrates that \eqref{P4} implies \eqref{P1}, which closes the loop of implications  -- and, except for the standalone  equivalence between  \eqref{P1} and  \eqref{P10},  this is the only implication in our proof  where the fact that $\operatorname{supp}(\mu) = \Omega$ is used. Indeed, Lemma \ref{lem:PosDefonSupp}  only guarantees that the kernel $K$ is positive definite (up to constant) on the support of the minimizer.  Observe also  that  due to Theorem \ref{thm:Constant on Supp}, if \eqref{P4} holds and $\mu$ has full support, then $\mu$ is automatically $K$-invariant. 
\end{proof}

To reiterate, this theorem reveals several interesting novel  properties of  a $K$-invariant measure (with full support):

\begin{itemize}
\item Equivalence between minimization over the set  $\mathbb{P}(\Omega)$ of  probability measures  and the set $\widetilde{\mathbb{P}} (\Omega)$ of all signed measures of mass one.  This effect has been observed for rotationally invariant kernels on the sphere and the surface measure $\sigma$ by two of the authors and F.~Dai  \cite{BDM}. This is not necessarily the case in other settings. In particular, for the integral over the unit  ball ${\mathbb{B}^{d}}$  $$ \int\limits_{\mathbb{B}^{d}} \int\limits_{\mathbb{B}^{d}} \| x-y \|\, d\mu (x) d \mu (y),$$ according to \cite{Bj}, the unique maximizer over both  $\mathbb{P}(\mathbb{B}^{d})$ and $\mathbb{P}(\mathbb{S}^{d-1})$ is $\sigma$.  According to the aforementioned equivalence, $\sigma$ is also a maximizer over $\widetilde{\mathbb{P}}(\mathbb{S}^{d-1})$, while in the case of signed measures on the ball,  the maximizer does not exist \cite{HNW}. Observe, that one cannot expect such minimizers to exist in general, since $\widetilde{\mathbb{P}} (\Omega)$  is not weak$^*$ compact. 
\item Equivalence between  being a local and global energy minimizer.  This effect, in a slightly less general form, has been observed by the authors and their collaborators in \cites{BFGMPV, BGMPV}.
\item Equivalence between other local and global properties: e.g., the energy functional $I_K$ is convex on the whole set of probability measures  if and only if it is convex just at the special measure $\mu$.
\item Equivalence between conditional positive definiteness of the kernel and positive definiteness up to constant, which is not true in general. 
\item We also note that, due to  Corollary \ref{cor:LocMinEquiv}, any of the three notions of local minimizers (directional, in  total variation, and  in the Wasserstein distance $W_p$, $1\le p< \infty$,  for metric spaces $\Omega$) may be assumed in part \eqref{P3} of  Theorem \ref{thm:CPDEquivalences}, as they are equivalent for $K$-invariant measures. 
\end{itemize}

We now formulate similar theorems for kernels which have the ``strict''  version of the properties and for positive definite kernels. We start with the latter.

\begin{theorem}\label{thm:PDEquivalences}
Suppose that $K$ is a kernel on $\Omega \times \Omega$  and that there exists a measure  $\mu \in \mathbb{P}(\Omega)$, which is $K$-invariant and has full support, i.e.  $U_K^{\mu}(x) = I_K(\mu)$ for all $x\in \Omega$ and $\operatorname{supp}(\mu) = \Omega$. Then the following conditions  are equivalent:
\begin{enumerate}
\item \label{Q1} The kernel $K$ is positive definite.

\item \label{Q2} The geometric mean inequality \eqref{eq:2inputGInequality} and $I_K(\mu_1) \geq 0$ hold for all $\mu_1, \mu_2 \in \mathbb{P}(\Omega)$.

\item \label{Q3} The measure $\mu$ is a global minimizer of $I_K$  and satisfies $I_K(\mu) \geq 0$.

\item \label{Q3a} $I_K$ is convex on $\mathcal M (\Omega)$. 

\item \label{Q4} The kernel $K$ can be represented as
$$ K(x,y) = \sum_{j=1}^{\dim( L^2(\Omega, \mu))} \lambda_j \phi_j(x) \phi_j(y)$$
where the series converges uniformly and absolutely, and $\lambda_j \geq 0$ for $j \geq 1$.

\item \label{Q5} There exists some symmetric $k \in L^2(\Omega \times \Omega , \mu \times \mu)$ such that for all $x, y \in \Omega$,
$$ K(x,y) = \int\limits_{\Omega} k(x,z) k(z,y)\, d \mu(z).$$
\end{enumerate}

\end{theorem}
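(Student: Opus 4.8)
The plan is to establish that each of (Q2)--(Q5) is equivalent to (Q1); this single star of equivalences then yields the full statement. Almost all of the work has already been done in Section~\ref{sec:PosDefKer}, so the proof is largely an assembly, and the only points demanding attention are the two places where $K$-invariance and full support are genuinely used.

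First I would dispose of the implications that need nothing about $\mu$. The equivalence (Q1)$\Leftrightarrow$(Q2) is exactly Lemma~\ref{lem:BHS2inputGInequality} (positive definiteness is equivalent to $I_K(\mathbb P(\Omega))\subset[0,\infty)$ together with the geometric-mean inequality), and (Q1)$\Leftrightarrow$(Q3a) is exactly Proposition~\ref{p:mircea}. For (Q1)$\Leftrightarrow$(Q4): the direction (Q4)$\Rightarrow$(Q1) is immediate from Lemma~\ref{lem:PDconstructed}, while for (Q1)$\Rightarrow$(Q4) I would apply Mercer's Theorem (Theorem~\ref{thm:Mercer}) to the given invariant measure $\mu$; since $\operatorname{supp}(\mu)=\Omega$, the eigenexpansion $K(x,y)=\sum_j\lambda_j\phi_j(x)\phi_j(y)$ with all $\lambda_j\ge 0$ converges absolutely and uniformly on all of $\Omega\times\Omega$, not merely on $\operatorname{supp}(\mu)$ --- this is the first essential use of full support. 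For (Q1)$\Leftrightarrow$(Q5) I would invoke Proposition~\ref{prop:Convolution}, again with this $\mu$ so that \eqref{eq:Convolution} holds on all of $\Omega$, and observe that the square root produced in its proof, $k(x,y)=\sum_j\sqrt{\lambda_j}\,\phi_j(x)\phi_j(y)$, is automatically symmetric, which supplies the symmetry demanded in (Q5); the converse half of Proposition~\ref{prop:Convolution} gives (Q5)$\Rightarrow$(Q1) for any such (in particular any symmetric) $k$.

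It remains to fold in (Q3). For (Q1)$\Rightarrow$(Q3): a positive definite kernel is conditionally positive definite, so, $\mu$ being $K$-invariant, Corollary~\ref{cor:CPDequivMIN} shows that $\mu$ is a global minimizer of $I_K$ over $\mathbb P(\Omega)$; moreover $I_K(\mu)\ge 0$ because $\mu\in\mathcal M(\Omega)$ and $K$ is positive definite. For (Q3)$\Rightarrow$(Q1): $\mu$ is a global minimizer with $I_K(\mu)\ge 0$, so Lemma~\ref{lem:OperatorPos} forces $T_{K,\mu}$ to be a positive operator, and then Lemma~\ref{lem:Hilbert-Schmidt} yields that $K$ is positive definite on $\operatorname{supp}(\mu)=\Omega$ --- the second and final appeal to full support. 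This closes the cycle.

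I do not anticipate a real obstacle: the content is a careful synthesis, and the substance lies in having the earlier lemmas available. The only subtleties to keep in view are (i) the two uses of full support flagged above (promoting ``positive definite on $\operatorname{supp}(\mu)$'' to ``positive definite on $\Omega$'', and global validity of the Mercer and convolution expansions), (ii) verifying that the constructed convolution square root may be taken symmetric, and (iii) carrying along the harmless normalizations $I_K(\mu)\ge 0$ and $I_K(\mu_1)\ge 0$ appearing in (Q2) and (Q3). One could instead route several of these implications through Theorem~\ref{thm:CPDEquivalences}, since positive definiteness implies conditional positive definiteness, but the direct route above is shorter and makes the role of each hypothesis transparent.
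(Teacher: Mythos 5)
Your proposal is correct and follows essentially the same route as the paper: the equivalences (Q1)$\Leftrightarrow$(Q2), (Q3a), (Q4), (Q5) via Lemma~\ref{lem:BHS2inputGInequality}, Proposition~\ref{p:mircea}, Corollary~\ref{cor:PDCharacterization}/Mercer, and Proposition~\ref{prop:Convolution}, with (Q3) folded in through $K$-invariance for the forward direction and through positivity of $T_{K,\mu}$ on $\operatorname{supp}(\mu)=\Omega$ (i.e.\ Lemma~\ref{lem:PosDefonSupp}) for the converse. The only cosmetic difference is that the paper cites Theorem~\ref{thm:CPDEquivalences} and Lemma~\ref{lem:PosDefonSupp} where you unpack the same content into Corollary~\ref{cor:CPDequivMIN} and Lemmas~\ref{lem:OperatorPos} and~\ref{lem:Hilbert-Schmidt}.
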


\begin{proof}
Lemma \ref{lem:BHS2inputGInequality}, Proposition \ref{p:mircea}, Corollary \ref{cor:PDCharacterization}, and Proposition \ref{prop:Convolution} show the equivalence of \eqref{Q1}, \eqref{Q2}, \eqref{Q3a}, \eqref{Q4}, and \eqref{Q5}. Positive definiteness, i.e.\ condition \eqref{Q1}, guarantees  that $I_K(\mu) \geq 0$, and that $\mu$ is a minimizer, due to Theorem \ref{thm:CPDEquivalences}. Conversely, Lemma \ref{lem:PosDefonSupp} shows that \eqref{Q3} implies \eqref{Q1}, finishing our proof. Observe also that according to Theorem  \ref{thm:CPDEquivalences} it does not matter whether we mean global minimization over ${\mathbb{P}} (\Omega)$ or $\widetilde{\mathbb{P}} (\Omega)$ in condition \eqref{Q3}. 
\end{proof}

\begin{theorem}
\label{thm:CSPDEquivalences}
Suppose that $K$ is a kernel on $\Omega \times \Omega$ and that there exists a measure  $\mu \in \mathbb{P}(\Omega)$ which is $K$-invariant, i.e.  $U_K^{\mu}(x) = I_K(\mu)$ for all $x\in \Omega$. Then the following conditions  are equivalent:
\begin{enumerate}
\item \label{R1} $K$ is conditionally strictly positive definite.

\item \label{R2} $K$ is strictly positive definite modulo a constant.

\item \label{R3} $\mu$ is the unique minimizer of $I_K$ over  $\widetilde{\mathbb{P}}(\Omega)$.

\item \label{RR} $I_K$ is strictly convex on $ \mathbb P(\Omega) $ (or, equivalently, on $\widetilde{\mathbb{P}}(\Omega)$).

\end{enumerate}

\noindent If in addition $\operatorname{supp}(\mu) = \Omega$, i.e. $\mu$ has full support, then each of the conditions \eqref{R1}--\eqref{R3} implies the following

\begin{enumerate}\setcounter{enumi}{4}
\item \label{R4} The kernel $K$ can be represented as
$$ K(x,y) = \sum_{j=1}^{\dim( L^2(\Omega, \mu))} \lambda_j \phi_j(x) \phi_j(y)$$
where $\{\phi_j\} $ is the orthonormal basis  consisting of eigenfunctions of the Hilbert--Schmidt operator $T_{K,\mu}$ in $L^2 (\Omega, \mu)$, the function $\phi_1$ is a constant, the series converges uniformly and absolutely, and $\lambda_j > 0$ for $j \geq 2$.

\end{enumerate}

\noindent Moreover, if the span of  $\{ \phi_j \}$ is dense in $C(\Omega)$, then \eqref{R4} also implies conditions \eqref{R1}--\eqref{R3}. 
\end{theorem}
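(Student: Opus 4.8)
\emph{Proof proposal.} The plan is to first establish the equivalence of the four conditions \eqref{R1}--\eqref{RR} by assembling results already in hand --- all of which use only $K$-invariance of $\mu$, not full support --- and then to treat \eqref{R4} separately via Mercer's theorem. For the first block: \eqref{R1}$\Leftrightarrow$\eqref{R3} is precisely the ``moreover'' clause of Theorem \ref{thm:CPDequivMIN}; \eqref{R1}$\Leftrightarrow$\eqref{RR} is the strict form of Proposition \ref{prop:ConvexCPDEqual2} (whose statement already includes the equivalence of convexity on $\mathbb P(\Omega)$ and on $\widetilde{\mathbb P}(\Omega)$, see also Corollary \ref{c:convexequiv}); and \eqref{R1}$\Leftrightarrow$\eqref{R2} follows from the strict versions of Lemmas \ref{lem:PD-CPD} and \ref{lem:CPDtoPDC}, the latter using $K$-invariance. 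This closes the loop among \eqref{R1}--\eqref{RR}, so in what follows I may freely use any of them.

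Next I would derive \eqref{R4} from \eqref{R2} under the extra hypothesis $\operatorname{supp}(\mu)=\Omega$, which suffices given the equivalences just established. Fix $C$ with $K':=K+C$ strictly positive definite; then $\mu$ is still $K'$-invariant, so the constant $\phi_1:=\mathbbm 1_\Omega$ (a unit vector in $L^2(\Omega,\mu)$) is an eigenfunction of $T_{K',\mu}$, and I can choose an orthonormal eigenbasis $\{\phi_j\}$ of $T_{K',\mu}$ starting with $\phi_1$, say $T_{K',\mu}\phi_j=\lambda'_j\phi_j$ with $\lambda'_j\ge 0$. Since $\mu$ has full support, Mercer's theorem (Theorem \ref{thm:Mercer}) gives $K'(x,y)=\sum_j\lambda'_j\phi_j(x)\phi_j(y)$ with absolute and uniform convergence; subtracting the constant $C=C\phi_1(x)\phi_1(y)$ produces the expansion of $K$ with $\lambda_1:=\lambda'_1-C$ (which equals $I_K(\mu)$) and $\lambda_j:=\lambda'_j$ for $j\ge 2$, still absolutely and uniformly convergent. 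A one-line computation, $T_{K,\mu}\phi_j=T_{K',\mu}\phi_j-C\langle\phi_j,\mathbbm 1_\Omega\rangle\mathbbm 1_\Omega$, confirms that the $\phi_j$ are simultaneously eigenfunctions of $T_{K,\mu}$, as \eqref{R4} requires.

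The point where strictness actually bites, and the step I expect to be the main obstacle, is showing $\lambda_j>0$ for $j\ge 2$ (equivalently $\lambda'_j>0$ for all $j$). The idea is to test $I_{K'}$ against $\mathrm d\nu_j:=\phi_j\,\mathrm d\mu$: this is a finite signed measure, and it is nonzero because $\|\phi_j\|_{L^2(\Omega,\mu)}=1$ would fail if $\phi_j=0$ $\mu$-a.e. Then $I_{K'}(\nu_j)=\langle T_{K',\mu}\phi_j,\phi_j\rangle_{L^2(\Omega,\mu)}=\lambda'_j$, so strict positive definiteness of $K'$ gives $\lambda'_j>0$. Care is needed because this must work even for indices where $\lambda'_j$ could a priori vanish, i.e.\ where $\phi_j$ is merely in $L^2$ and not known to be continuous --- but the test measure makes sense regardless, so this is a point to state carefully rather than a genuine obstruction.

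Finally, for the converse ``moreover'' implication I would assume \eqref{R4} together with $\operatorname{span}\{\phi_j\}$ dense in $C(\Omega)$, take $\nu\in\mathcal Z(\Omega)$ with $\nu\ne 0$, and integrate the uniformly convergent series term by term (licit since $|\nu|$ is finite) to get $I_K(\nu)=\sum_j\lambda_j(\int_\Omega\phi_j\,\mathrm d\nu)^2$. The $j=1$ term drops out because $\phi_1$ is constant and $\nu(\Omega)=0$, so $I_K(\nu)=\sum_{j\ge 2}\lambda_j(\int_\Omega\phi_j\,\mathrm d\nu)^2\ge 0$; and if this is zero, positivity of $\lambda_j$ for $j\ge 2$ forces $\int_\Omega\phi_j\,\mathrm d\nu=0$ for every $j$, whence $\int_\Omega p\,\mathrm d\nu=0$ for all $p$ in the span, and then density in $C(\Omega)$ plus the Riesz representation theorem gives $\nu=0$, a contradiction. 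Thus \eqref{R1} holds, and \eqref{R2}, \eqref{R3} follow from the first block.
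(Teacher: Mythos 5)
Your proof is correct and follows the same overall architecture as the paper's: the equivalence of the first four conditions is assembled from Theorem \ref{thm:CPDequivMIN}, the strict versions of Lemmas \ref{lem:PD-CPD} and \ref{lem:CPDtoPDC}, and Proposition \ref{prop:ConvexCPDEqual2}; condition \eqref{R4} comes from Mercer's theorem after shifting by a constant; and the converse uses term-by-term integration plus density of $\operatorname{span}\{\phi_j\}$ in $C(\Omega)$. The one place you genuinely diverge is the strictness of the eigenvalues. The paper derives $\lambda_l>0$ for $l\ge 2$ from the unique-minimizer condition \eqref{R3}: assuming $\lambda_l=0$, it builds the competitor $d\nu=(1+\phi_l)\,d\mu\in\widetilde{\mathbb P}(\Omega)$ and computes $I_K(\nu)=I_K(\mu)$, contradicting uniqueness. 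You instead invoke strict positive definiteness of $K'=K+C$ directly, testing against $d\nu_j=\phi_j\,d\mu\ne 0$ and reading off $I_{K'}(\nu_j)=\langle T_{K',\mu}\phi_j,\phi_j\rangle=\lambda_j'$. Both are valid; yours is a touch more direct and makes explicit (as the paper does not) the point that eigenfunctions with a priori zero eigenvalue need not be continuous, which is harmless since $\phi_j\,d\mu$ is a legitimate finite signed Borel measure for $\phi_j\in L^2(\mu)\subset L^1(\mu)$. A second minor variation: for the final implication you prove conditional strict positive definiteness on $\mathcal Z(\Omega)$ directly, whereas the paper shifts the constant term and verifies strict positive definiteness of $K'$ on all of $\mathcal M(\Omega)$; these amount to the same computation, with the $j=1$ term either vanishing (your route) or contributing $(\nu(\Omega))^2$ (the paper's).
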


\begin{proof}
    Lemma \ref{lem:PD-CPD} shows that \eqref{R2} implies \eqref{R1}, while Lemma \ref{lem:CPDtoPDC} provides the converse implication.  The equivalence of \eqref{R3} and \eqref{R1} is proved in Theorem \ref{thm:CPDequivMIN}. For equivalence of \eqref{R1} and \eqref{RR}, see the discussion after Proposition~\ref{prop:ConvexCPDEqual2}.

Before we turn to dealing with condition \eqref{R4}, recall that $K$-invariance   of $\mu$ implies that a constant  is an eigenfunction of  the operator $T_{K,\mu}$, so we shall assume that $\phi_1 = \mathbbm{1}_{\Omega}$. 

Now we show that \eqref{R3} implies \eqref{R4}. Since $\mu$ minimizes  $I_K$ and has full support, by Theorem \ref{thm:CPDEquivalences}, $K$ is positive definite up to a constant, and since $\phi_1 =1$,  by Mercer's Theorem, the expansion in part \eqref{R4} holds with some $\lambda_j \in \mathbb{R}$, such that $\lambda_j \ge 0$ for $j\geq 2$,  and with the series converging uniformly and absolutely.  Suppose, indirectly, that $\mu$ is the unique minimizer of $I_K$ over $\widetilde{\mathbb{P}}(\Omega)$, but there exists some $l \geq 2$ such that $\lambda_l = 0$. Let $d \nu(x) = (1 + \phi_l(x))\, d\mu(x)$. Orthogonality implies that  $\int\limits_\Omega \phi_l (x) \, d\mu(x) = 0$, therefore $\nu \in \widetilde{\mathbb{P}}(\Omega)$.
 Then we obtain
\begin{align*}
I_K(\nu) & = \int\limits_{\Omega} \int\limits_{\Omega} K(x,y) (1 + \phi_l(x)) (1 + \phi_l(y))\, d \mu(x)d\mu(y) \\
& = I_K(\mu) +  2  \langle T_{K,\mu} \phi_l, \mathbbm{1}_{\Omega} \rangle_{L^2(\Omega, \mu)} +  \langle T_{K,\mu} \phi_l, \phi_l  \rangle_{L^2(\Omega, \mu)}\\
& = I_K (\mu) + 2 \lambda_l  \langle   \phi_l, \mathbbm{1}_{\Omega} \rangle_{L^2(\Omega, \mu)} + \lambda_l  \|  \phi_l \|_{L^2(\Omega, \mu)}^2  = I_K(\mu), 
\end{align*}
which contradicts the fact that $\mu$ is the unique minimizer over $\widetilde{\mathbb{P}}(\Omega)$. 

Finally, we show that \eqref{R4} implies \eqref{R2}  under the aforementioned additional assumption. Let $K'(x,y) = K(x,y) - \lambda_1 +1$ and $\nu \in \mathcal{M}(\Omega)$.  Then
\begin{align*}
I_{K'}(\nu) & = \int\limits_{\Omega} \int\limits_{\Omega} K'(x,y)\, d \nu(x) d\nu(y) \\
& = (\nu(\Omega))^2 +  \sum_{j =2}^{\dim( L^2( \Omega, \mu))} \int\limits_{\Omega} \int\limits_{\Omega} \lambda_j \phi_j(x) \phi_j(y)\ d \nu(x) d\nu(y) \\
& = (\nu(\Omega))^2 +  \sum_{j =2}^{\dim( L^2( \Omega, \mu))} \lambda_j  \Big( \int\limits_{\Omega} \phi_j(x)\, d \nu(x) \Big)^2 \geq 0.
\end{align*}
Clearly, the only way that $I_{K'}(\nu) =0$ is if $\int\limits_{\Omega} \phi_j(x)\, d \nu(x) = 0$ for all $j \geq 1$. By the density of $\operatorname{span} \{ \phi_j \}_{j\ge 1} $ in $C(\Omega)$, we can conclude that this implies $\nu = 0$, so $K'$ must be strictly positive definite, which completes the  proof.
\end{proof}

We conclude with the  remark that the additional condition imposed for the sufficiency of condition \eqref{R4} is not very restrictive in practice. For example in the case of rotationally invariant kernels on the sphere and $\mu = \sigma$, the eigenfunctions $\phi_j$ are simply spherical harmonics, which span all polynomials on the sphere and thus their span is dense in the space continuous functions.


\section{The Generalized Stolarsky Principle on Compact  Spaces}\label{sec:MetricStol}

Discrepancy theory analyzes discrete point configurations by comparing their distribution to some fixed (usually uniform) measure $\mu$ on a class of test sets. More precisely, the local discrepancy of  a finite configuration $\omega_N = \{z_1,\dots, z_N\} \subset \Omega$ with respect to  a set $A \subseteq \Omega$ is defined as 
\begin{equation}\label{eq:discA}
D (\omega_N, A) =   \frac{1}{N} \sum_{i=1}^N \mathbbm{1}_A ( z_i)  - \mu (A) .
\end{equation}
One then takes the supremum over some class $\mathcal A$ of test sets $A$  (extremal discrepancy) or a certain average, if the class $\mathcal A$ admits a natural measure (quadratic averages lead to the so-called $L^2$  discrepancy). These quantities provide important information about the distribution of the discrete set $\omega_N$  \cite{BC, Ma, KN}.

Discrepancy is closely related to discrete energy  \begin{equation}\label{eq:2DiscreteEnergy}
E_K(\omega_N) :=  \frac{1}{N^2} \sum_{x,y \in \omega_N}  K (x,y). 
\end{equation}
The definitions of discrete \eqref{eq:2DiscreteEnergy} and continuous \eqref{eq:2ContEnergy} energies are compatible in the sense that 
\begin{equation}
E_{K}(\omega_N) = I_{K}(\mu_{\omega_N}), \; \; \; \; \text{ where } \mu_{\omega_N} = \frac{1}{N} \sum_{x \in \omega_N} \delta_{x}.
\end{equation}
Similarly, the definition of discrepancy may be extended to the continuous setting by replacing the empirical measure  $\mu_{\omega_N} (A) = \frac{1}{N} \sum_{i=1}^N \mathbbm{1}_A ( z_i) $ by an arbitrary measure $\nu \in \widetilde{\mathbb P} (\Omega)$.  \\

The foundational example of the connection between energy and discrepancy is given by the classical Stolarsky Invariance Principle \cite{St} which states that 
\begin{equation}\label{eq:StolOrig}
c_d   D^{\, 2}_{L^2,\textup{cap}} (\omega_N)   =   \int\limits_{\mathbb{S}^{d-1}} \int\limits_{\mathbb{S}^{d-1}} ||  x- y || \, d\sigma (x)\, d\sigma (y)\,\,  - \,\, \frac{1}{N^2} \sum_{i,j = 1}^N || z_i - z_j ||,   
\end{equation}
where $D_{L^2,\textup{cap}} (\omega_N)$ is the $L^2$ spherical cap discrepancy 
\begin{equation}\label{eq:DefSphCapDisc}
D^{\, 2}_{L^2,\textup{cap}} (\omega_N)   =  \int\limits_{-1}^1 \,  \int\limits_{\mathbb{S}^{d-1}}  \bigg| \frac1{N} \sum_{j=1}^N \mathbbm{1}_{C(x,h)} (z_j) -  \sigma \big( C(x,h) \big) \bigg|^2 d\sigma (x) \, dh
\end{equation}
and $C(x,h)$ denotes a spherical cap of height $h\in [-1,1]$ centered at $x\in \mathbb S^{d-1}$, i.e. $C(x,h ) = \{ z \in \mathbb{S}^{d-1}:\, \langle z, x \rangle > h \}$. This principle shows that minimizing the $L^2$ spherical cap discrepancy is equivalent to an energy optimization problem -- maximizing the pairwise sum of Euclidean distances. Observe that by setting $K (x,y) = 1 - c_d^{-1} \| x-y \|$, one can rewrite \eqref{eq:StolOrig} as 
\begin{equation}
 D^{\, 2}_{L^2,\textup{cap}} (\omega_N) = I_K (\nu) - I_K (\sigma)
\end{equation}
with $\nu = \mu_{\omega_N}$, and the right-hand side of this relation is familiar to the reader from  identity \eqref{eq:Linear1} of Lemma \ref{lem:NuMinusMu}. 

In the recent years, numerous authors (including the first two authors of this paper) revisited this fascinating fact, extended it, and applied it to various problems of discrete geometry and optimization: new proofs of the original Stolarsky Invariance Principle have been given in \cite{BrD, BDM, HBZO}, it has been extended to geodesic distances and other rotationally invariant kernels on the sphere  \cite{BD, BDM},
to   compact, connected, two-point homogeneous spaces \cite{Sk1, Sk2} and to the Hamming cube \cite{Ba, BS}, and applied to two problems of Fejes T\'oth on sums of various distances on the  sphere and in  projective spaces \cite{BDM, BM}. \\

In this paper, we prove a general version of this principle on arbitrary compact spaces which extends some of the versions mentioned above and does not use any structural information about the underlying domain. Let $\Omega$ be a compact  topological  space and let us  fix a measure $\mu \in \mathbb{P}( \Omega)$ -- this will usually be an energy minimizing (equilibrium) measure or an invariant measure of full support (its role is similar to that of $\sigma$ in the spherical case). We now  define the \textit{$L^2$ discrepancy of an arbitrary  probability  measure $\nu \in \mathbb{P}(\Omega) $  (or even a signed measure $\nu \in \widetilde{\mathbb{P}} (\Omega)$) relative to the  measure $\mu$ with respect to the function $k: \Omega \times \Omega \rightarrow \mathbb R$} by the identity
\begin{equation}\label{eq:dl2mu}
\begin{aligned}
D_{L^2, k, \mu}^{\, 2} (\nu) &= \int\limits_\Omega \bigg|  \int\limits_\Omega k(x,y)\, d\nu(y) - \int\limits_\Omega k(x,y)\, d\mu(y)  \bigg|^2 d\mu(x)\\
&= \int\limits_\Omega \bigg|  \int\limits_\Omega k(x,y)\, d\big( \nu -  \mu \big) (y)    \bigg|^2 d\mu(x).
\end{aligned}
\end{equation}

\noindent When $\nu$ is the equal-weight discrete measure associated to the $N$-point set $\omega_N= \{ z_1, ..., z_N\} \subset \Omega$, i.e. $\displaystyle{ \nu = \frac1{N} \sum_{i=1}^N \delta_{z_i}}$, this  becomes the discrepancy of the set $\omega_N$ with respect to $k$:
\begin{equation}\label{eq:dl2z}
D_{L^2, k, \mu}^{\, 2} (\omega_N) = D_{L^2, k, \mu}^{\, 2} \Big(\frac1{N} \sum_{i=1}^N \delta_{z_i} \Big) =  \int\limits_\Omega \bigg|   \frac1{N}  \sum_{i=1}^N   k (x, z_i)  - \int\limits_\Omega k(x,y)\, d\mu(y)  \bigg|^2 d\mu(x)
\end{equation}
Notice that, in the spherical case, setting $k(x,y) = \mathbbm{1}_{\{\langle x,y  \rangle > h \} } = \mathbbm{1}_{C(x,h)} (y) $, one finds that \eqref{eq:dl2z} is equal to  the inner integral in \eqref{eq:DefSphCapDisc} with the integrand of the form \eqref{eq:discA}. Therefore, this definition is indeed an extension of the classical notion of discrepancy with arbitrary functions $k$ in place of indicators of test sets. 

We can now obtain the following  general version of the Stolarsky Invariance Principle:

\begin{theorem}[Generalized Stolarsky Principle]\label{thm:genStol}
Let $K$ be a positive definite (modulo an additive constant $C$) kernel on $\Omega\times \Omega$. Let us assume that $\mu \in \mathbb{P}(\Omega)$ is a $K$-invariant measure  with full support, i.e.  $U_K^{\mu}(x) = I_K (\mu) $ for  all  $x\in \Omega$ and $\operatorname{supp} (\mu) = \Omega$. Then for every measure $\nu \in \widetilde{\mathbb{P}} (\Omega)$, we have the following identity.
\begin{equation}\label{eq:stol1Metric}
I_K (\nu) - I_K (\mu) = D_{L^2, k, \mu}^{\, 2} (\nu),
\end{equation}
where the function $k \in L^2(\Omega \times \Omega , \mu \times \mu)$ is  as in part \eqref{Q5}  of  Theorem    \ref{thm:PDEquivalences} applied to the positive definite kernel $K+C$. 

In particular, for a discrete set $\omega_N= \{z_1, ..., z_N \} \subset \Omega$, 
\begin{equation}\label{eq:stol2Metric}
E_K (\omega_N ) - I_K (\mu) = D_{L^2, k, \mu}^{\, 2} (\omega_N).
\end{equation}

\end{theorem}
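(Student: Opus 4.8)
The plan is to reduce the identity \eqref{eq:stol1Metric} to the convolution square root representation (Proposition \ref{prop:Convolution}, equivalently part \eqref{Q5} of Theorem \ref{thm:PDEquivalences}) together with the crucial linear identity \eqref{eq:Linear1} of Lemma \ref{lem:NuMinusMu}, and to isolate the only analytic subtlety, a Fubini interchange, as a uniform bound coming from compactness. First I would record the needed input: since $\mu$ is $K$-invariant it is also $(K+C)$-invariant, and $K+C$ is positive definite by hypothesis, so part \eqref{Q5} of Theorem \ref{thm:PDEquivalences} applied to $K+C$ yields a symmetric $k\in L^2(\Omega\times\Omega,\mu\times\mu)$ with $(K+C)(x,y)=\int_\Omega k(x,z)k(z,y)\,d\mu(z)$ for all $x,y\in\Omega$. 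Symmetry of $k$ lets us read this as $(K+C)(x,y)=\langle k(x,\cdot),k(y,\cdot)\rangle_{L^2(\mu)}$; in particular $\|k(x,\cdot)\|_{L^2(\mu)}^2=(K+C)(x,x)$ for every $x$, a quantity bounded by $M:=\sup_{x\in\Omega}(K+C)(x,x)<\infty$ because $\Omega$ is compact and $K+C$ is continuous.

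Next I would pass to $\gamma:=\nu-\mu\in\mathcal Z(\Omega)$. Since $I_{K+C}(\eta)=I_K(\eta)+C(\eta(\Omega))^2$ and $\nu,\mu$ both have total mass one, we get $I_K(\nu)-I_K(\mu)=I_{K+C}(\nu)-I_{K+C}(\mu)$, and because $\mu$ is $(K+C)$-invariant, \eqref{eq:Linear1} gives $I_{K+C}(\nu)-I_{K+C}(\mu)=I_{K+C}(\gamma)$. Thus it remains to show $I_{K+C}(\gamma)=D^2_{L^2,k,\mu}(\nu)$. Substituting the convolution representation,
\[
I_{K+C}(\gamma)=\int_\Omega\int_\Omega\Big(\int_\Omega k(x,z)k(z,y)\,d\mu(z)\Big)d\gamma(x)\,d\gamma(y),
\]
and then interchanging the order of integration to pull the $d\mu(z)$ integral outside yields, using symmetry of $k$,
\[
\int_\Omega\Big(\int_\Omega k(z,x)\,d\gamma(x)\Big)\Big(\int_\Omega k(z,y)\,d\gamma(y)\Big)d\mu(z)=\int_\Omega\Big|\int_\Omega k(z,x)\,d(\nu-\mu)(x)\Big|^2 d\mu(z),
\]
which is exactly $D^2_{L^2,k,\mu}(\nu)$ by \eqref{eq:dl2mu} (after renaming variables). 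The discrete identity \eqref{eq:stol2Metric} then follows by specializing to $\nu=\frac1N\sum_{i=1}^N\delta_{z_i}$, since $I_K(\nu)=E_K(\omega_N)$ and $D^2_{L^2,k,\mu}(\nu)=D^2_{L^2,k,\mu}(\omega_N)$ by \eqref{eq:dl2z}.

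The hard part will be justifying the Fubini--Tonelli interchange: $\gamma$ is a general finite signed measure with no absolute continuity with respect to $\mu$, so a priori the slice maps $z\mapsto\int_\Omega k(z,x)\,d\gamma(x)$ are defined only $\mu$-almost everywhere, and one must be careful that the right-hand side $D^2_{L^2,k,\mu}(\nu)$ is itself well-defined (for discrete $\nu$ the expression \eqref{eq:dl2z} implicitly picks a pointwise representative of each $k(\cdot,z_i)\in L^2(\mu)$). The key estimate is that, by the Cauchy--Schwarz inequality in $z$, $\int_\Omega|k(x,z)|\,|k(z,y)|\,d\mu(z)\le\|k(x,\cdot)\|_{L^2(\mu)}\|k(\cdot,y)\|_{L^2(\mu)}=\sqrt{(K+C)(x,x)(K+C)(y,y)}\le M$ for all $x,y$; hence the triple integral of $|k(x,z)|\,|k(z,y)|$ against $d\mu(z)\,d|\gamma|(x)\,d|\gamma|(y)$ is at most $M\,|\gamma|(\Omega)^2<\infty$, so Fubini applies in every order. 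Integrating the same bound first in $z$ and then in $x$ shows in addition that $\int_\Omega|k(z,x)|\,d|\gamma|(x)\le\sqrt{M}\,|\gamma|(\Omega)$ for $\mu$-a.e.\ $z$, so the slices are genuine functions in $L^2(\mu)$ and the displayed chain of equalities is legitimate. With this bookkeeping done, everything else is a routine rearrangement.
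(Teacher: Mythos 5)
Your proof is correct and follows essentially the same route as the paper: reduce to $K+C$, apply the linearization identity \eqref{eq:Linear1} to get $I_K(\nu)-I_K(\mu)=I_{K+C}(\nu-\mu)$, substitute the convolution square root from part \eqref{Q5} of Theorem \ref{thm:PDEquivalences}, and interchange the order of integration to produce $D^2_{L^2,k,\mu}(\nu)$. Your Cauchy--Schwarz bound $\int_\Omega|k(x,z)|\,|k(z,y)|\,d\mu(z)\le\sqrt{(K+C)(x,x)(K+C)(y,y)}$ is a clean, explicit justification of the Fubini step that the paper handles more briefly in the remark following its proof.
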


This theorem has the following immediate corollary:
\begin{corollary}\label{cor:genStol}
Let $K$ be a   kernel on $\Omega\times \Omega$. Assume that $\mu \in \mathbb{P}(\Omega)$ is a global minimizer of the energy functional $I_K$ over $\mathbb{P}(\Omega)$ with $\operatorname{supp} (\mu ) = \widetilde{\Omega} \subseteq \Omega$.  Then identity \eqref{eq:stol1Metric} holds for any signed measure $\nu$ with total mass one, whose support is contained in the support of $\mu$, i.e. $\nu \in \widetilde{\mathbb{P}} (\widetilde{\Omega})$. Similarly, relation \eqref{eq:stol2Metric} holds for any point set $\omega_N = \{z_1, ..., z_N \} \subset \widetilde\Omega$.
\end{corollary}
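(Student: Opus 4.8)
The plan is to reduce the statement to Theorem~\ref{thm:genStol} by replacing the ambient space $\Omega$ with $\widetilde\Omega = \operatorname{supp}(\mu)$. This set is closed in $\Omega$, hence a compact (metric) space in its own right, and the key observation is that, even though $K$ may fail to be positive definite modulo a constant on all of $\Omega$, it does satisfy the hypotheses of Theorem~\ref{thm:genStol} once we restrict attention to $\widetilde\Omega \times \widetilde\Omega$.

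First I would verify the two structural assumptions on $\widetilde\Omega$. Since $\mu$ is a global (in particular, local) minimizer of $I_K$ over $\mathbb{P}(\Omega)$, Lemma~\ref{lem:PosDefonSupp} shows that $K$ is positive definite up to an additive constant $C$ as a kernel on $\widetilde\Omega \times \widetilde\Omega$. Theorem~\ref{thm:Constant on Supp} gives $U_K^\mu(x) = I_K(\mu)$ for all $x \in \widetilde\Omega$, so $\mu$ is $K$-invariant when $\widetilde\Omega$ is regarded as the underlying space, and $\operatorname{supp}(\mu) = \widetilde\Omega$ means $\mu$ has full support there. Applying Theorem~\ref{thm:genStol} with $\widetilde\Omega$ in place of $\Omega$ then produces a function $k \in L^2(\widetilde\Omega \times \widetilde\Omega, \mu \times \mu)$ — namely the convolution square root of $K + C$ from part~\eqref{Q5} of Theorem~\ref{thm:PDEquivalences} — for which
\begin{equation*}
I_K(\nu) - I_K(\mu) = D_{L^2, k, \mu}^{\, 2}(\nu)
\end{equation*}
holds for every $\nu \in \widetilde{\mathbb{P}}(\widetilde\Omega)$.

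The remaining step is purely bookkeeping: one checks that none of the quantities in \eqref{eq:stol1Metric} depends on whether it is computed over $\Omega$ or over $\widetilde\Omega$. Indeed, $\mu$ and every admissible $\nu$ are carried by $\widetilde\Omega$, so the double integrals defining $I_K(\mu)$ and $I_K(\nu)$ see only the restriction of $K$ to $\widetilde\Omega\times\widetilde\Omega$; in the definition \eqref{eq:dl2mu} of $D_{L^2, k, \mu}^{\, 2}(\nu)$ the outer integration is against $\mu$ and the inner one against $\nu - \mu$, both supported on $\widetilde\Omega$, so $k$ only needs to be defined there. This yields \eqref{eq:stol1Metric}. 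Finally, \eqref{eq:stol2Metric} follows by specializing to $\nu = \mu_{\omega_N} = \frac1N \sum_{i=1}^N \delta_{z_i} \in \widetilde{\mathbb{P}}(\widetilde\Omega)$ for $\omega_N \subset \widetilde\Omega$ and using $E_K(\omega_N) = I_K(\mu_{\omega_N})$. I do not expect a genuine obstacle; the only point that requires care is ensuring that the constant $C$ and the square root $k$ are taken relative to $\widetilde\Omega$, since no global $k$ need exist when $K$ is not positive definite modulo a constant on all of $\Omega$.
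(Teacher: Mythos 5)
Your proposal is correct and follows essentially the same route as the paper: invoke Theorem~\ref{thm:Constant on Supp} to get $K$-invariance and full support of $\mu$ on $\widetilde\Omega$, invoke Lemma~\ref{lem:PosDefonSupp} for positive definiteness modulo a constant on $\widetilde\Omega$, and then apply Theorem~\ref{thm:genStol} with $\widetilde\Omega$ in place of $\Omega$. The additional bookkeeping you include (that all integrals only see $\widetilde\Omega$) is a correct, if slightly more explicit, rendering of what the paper leaves implicit.
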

\begin{proof} If $\mu$ is a global minimizer of $I_K$, by Theorem \ref{thm:Constant on Supp}, the potential $U_K^\mu$ is constant on $\widetilde\Omega$, i.e. $\mu$ is $K$-invariant and has full support if viewed as an element of $\mathbb{P}(\widetilde{\Omega})$. Moreover,  according to Lemma \ref{lem:PosDefonSupp}, the kernel $K$ is positive definite (modulo a constant) on $\widetilde{\Omega}$. Therefore, the statement follows directly from Theorem \ref{thm:genStol} applied to $\widetilde\Omega$ in place of $\Omega$. 
\end{proof}

We now turn to the proof of the generalized Stolarsky principle: 

\begin{proof}[Proof of Theorem \ref{thm:genStol}] 
Without loss of generality, we can assume that $K$ is positive definite, since adding a constant to $K$  affects neither the invariance of $\mu$ nor the difference $I_K (\nu) - I_K ({\mu})$. We  can now use  the crucial identity \eqref{eq:Linear1} of Lemma \ref{lem:NuMinusMu},  as well as part \eqref{Q5}  of  Theorem    \ref{thm:PDEquivalences}, to obtain
\begin{align}
\nonumber I_K (\nu) - I_K (\mu) & = I_K (\nu - \mu) =  \int\limits_{\Omega} \int\limits_{\Omega}   K(x,y)  d\big(\nu - \mu \big) (x)  d\big(\nu - \mu \big) (y)\\
\label{eq:middle} & =   \int\limits_{\Omega} \int\limits_{\Omega}   \int\limits_\Omega k(x,z) k (z,y) \, d\mu (z)   d\big(\nu - \mu \big) (x)  d\big(\nu - \mu \big) (y) \\
\nonumber & = \int\limits_\Omega \bigg|   \int\limits_\Omega k(x,z)  d\big(\nu - \mu \big) (x) \bigg|^2  \, d\mu (z)  = D_{L^2, k, \mu}^{\, 2} (\nu).
\end{align}
\end{proof}

{\emph{Remark:}} Observe that, for $k \in L^2(\Omega \times \Omega , \mu \times \mu)$, it is not technically obvious that the definition of $D_{L^2, k, \mu}^2 (\nu)$ in \eqref{eq:dl2mu} is properly justified: we do not know a priori that $k$ is integrable with respect to $\nu$, only with respect to $\mu$. (This problem does not occur in the discrete case since we know that $k(\cdot, z_i) \in L^2 (\Omega, \mu )$ for each $i=1,...,N$.)  However, the proof of Stolarsky principle \eqref{eq:stol1Metric} demonstrates that  the $L^2$ discrepancy $D_{L^2, k, \mu}^2 $ is well-defined for any Borel measure $\nu$. Indeed, the inner integral with respect to $d\mu (z)$ in \eqref{eq:middle} is defined according to part \eqref{Q5}  of Theorem    \ref{thm:PDEquivalences}, and, moreover, produces the  function $K(x,y)$, which is continuous and therefore integrable with respect to  the finite Borel measure $(\nu - \mu) \times (\nu - \mu)$ on $\Omega \times \Omega$. Hence Fubini's theorem applies and $$\int\limits_{\Omega}  \int\limits_{\Omega} k(x,z) k(y,z)  d\big(\nu - \mu \big) (x)   d\big(\nu - \mu \big) (y) =   \bigg| \int\limits_{\Omega} k (x,z)   d\big(\nu - \mu \big) (x) \bigg|^2 $$ is finite for $\mu$-a.e. $z$ and is integrable with respect to $d\mu (z)$, i.e. $D_{L^2, k, \mu}^2 (\nu)$ is well-defined.

\section{The Spherical Case}\label{sec:Sphere}

We end with a brief discussion of  the results of  prior sections in the particular case when the domain is the sphere in  Euclidean space. More specifically,  let $\Omega = \mathbb S^{d-1}$ and let  $K$ be a rotationally invariant kernel  on the sphere, i.e. $K (x,y ) = F (\langle x,y \rangle)$ for each $x,y \in \mathbb S^{d-1}$, where $F \in C[-1,1]$.  By a slight abuse of notation, we shall also call the kernel $F$ and  the energy $I_F$. 

 We immediately observe that $\sigma$, the normalized surface measure on $\mathbb S^{d-1}$, is an $F$-invariant measure with full support. Therefore, all of the results of Sections \ref{sec:PosDefKer}--\ref{sec:Energy Basics} apply. In particular, Theorem  \ref{thm:CPDEquivalences} holds with $\mu = \sigma$. Some of its conclusions are interesting even in this classical case. We  do not completely restate Theorems \ref{thm:CPDEquivalences} and \ref{thm:PDEquivalences} for $\Omega = \mathbb S^{d-1}$, but simply summarize  some interesting facts.\\

For any function $F\in C[-1,1]$ which generates a kernel $F (\langle x,y \rangle)$ on $\mathbb S^{d-1} \times \mathbb S^{d-1}$,

\begin{enumerate}[(i)]
\item\label{Sp1} conditional positive definiteness of $F$ on the sphere  is equivalent to positive definiteness up to an additive constant; 
\item\label{Sp2} the facts that $\sigma$ minimizes in three different ways $I_F$ (locally, globally over probability measures, globally over signed measures of total mass one) are all equivalent to each other;
\item\label{Sp3} in turn, the fact that $\sigma$ minimizes $I_F$ is equivalent to conditional positive definiteness of $F$ on the sphere.
\end{enumerate}
Some  results of part \eqref{Sp2} have been observed in \cite{BDM, BGMPV} and part \eqref{Sp3} is well known, see, e.g., \cite{BDM} (but heuristically it goes back to  \cite{Sc}). 

Further considerations are connected to the Gegenbauer expansions of positive definite kernels. The Gegenbauer polynomials $\{ C_n^\lambda\}_{n\ge 0} $ form an orthogonal basis of the space $ L^2 ([-1,1], w_\lambda)$  of  functions on $[-1,1]$, which are square integrable with respect to the weight $w_\lambda (t)   = (1-t^2)^{\lambda - \frac12}$.  These functions are closely related to harmonic analysis on the sphere, see e.g. \cite{DX} for details, when  $\lambda = \frac{d-2}{2}$ (which we assume for the rest of the section).

The kernel $F$ can be expanded in Gegenbauer series 
\begin{equation}\label{eq:GegenExpansion}
F(t) =  \sum_{n=0}^\infty \widehat{F}(n, \lambda) \frac{n+\lambda}{\lambda} C_n^\lambda (t)  
\end{equation}
in the sense of  $ L^2 ([-1,1], w_\lambda)$.  In addition, the Funk--Hecke formula, which states that
\begin{equation}\label{eq:Funk-Hecke}
\int\limits_{\mathbb{S}^{d-1}} F(\langle x , y \rangle) Y_n(y)\, d\sigma(y) = \widehat{F}(n, \lambda) Y_n(x)
\end{equation} 
whenever $Y_n$ is a spherical harmonic of degree $n$ on $\mathbb S^{d-1}$, demonstrates that  the spherical harmonics are exactly the eigenfunctions of the Hilbert--Schmidt operator $T_{F,\sigma}$, with the corresponding Gegenbauer coefficients as eigenvalues. Thus, equivalence between positivity of $T_{F,\sigma}$ and positive definiteness of $F$, recovers the  seminal result of Schoenberg \cite{Sc} which asserts that positive definiteness on the sphere  is equivalent to non-negativity of Gegenbauer coefficients.  Moreover, if $\{ Y_n,k \}_{k=1}^{ \dim(\mathcal{H}_n^d)}$ is an orthonormal basis of the space $ \mathcal{H}_n^d$ of spherical harmonics of degree $n$ on $\mathbb S^{d-1}$, then the {\it{addition formula }}
\begin{equation}\label{eq:addSphHarm}
    \sum_{j=1}^{ \dim(\mathcal{H}_n^d)} Y_{n,j}(x) Y_{n,j}(y) = \frac{n+\lambda}{\lambda} C_n^\lambda(\langle x,  y\rangle)\   \ \textup{ for all }\,\, x,y\in \mathbb{S}^{d-1},
\end{equation}
together with Mercer's Theorem, Theorem \ref{thm:Mercer}, implies that for positive definite $F$ the following series converges absolutely and uniformly
\begin{align*}
F(\langle x, y\rangle ) = \sum_{n=0}^\infty \widehat{F} (n,\lambda) \sum_{k=1}^{\dim(\mathcal{H}_n^d)} Y_{n,k} (x) Y_{n,k} (y)  = \sum_{n=0}^\infty \widehat{F} (n,\lambda) \frac{n+\lambda}{\lambda} C^\lambda_n (\langle x, y\rangle ),
\end{align*}
and hence the Gegenbauer expansion of a function $F \in C[-1,1]$, which is  positive definite on the sphere,  must be  absolutely and uniformly convergent. Absolute convergence of the Gegenbauer expansions of positive definite functions has been observed before \cite{G, BD} using special properties of Gegenbauer polynomials -- here we see that this fact is a consequence of general spectral theory.  Part \eqref{Q4} of Theorem \ref{thm:PDEquivalences} and part \eqref{P10}  of Theorem \ref{thm:CPDEquivalences} may be restated as follows:
\begin{enumerate}[(i)]\setcounter{enumi}{3}
\item\label{Sp4} The kernel $F$ is (conditionally) positive definite on the sphere if and only if the Gegenbauer coefficients satisfy $\widehat{F} (n,\lambda) \ge 0 $ for all $n\ge 0$ ($n\ge 1$). Moreover, in this case, the Gegenbauer expansion \eqref{eq:GegenExpansion} converges uniformly and absolutely. 
\end{enumerate}
Similarly, one obtains the characterization in terms of the ``convolution square root'', as in part \eqref{Q5} of Theorem \ref{thm:PDEquivalences}:
\begin{enumerate}[(i)]\setcounter{enumi}{4}
\item\label{Sp5} positive definiteness of $F$ on the sphere is equivalent to the existence  of the function $f \in L^2 ([-1,1], w_\lambda)$    such that 
\begin{equation}\label{eq:3-1}
    F(\langle x,y \rangle)=\int\limits_{\mathbb{S}^{d-1}} f(\langle x, z \rangle) f(\langle z, y\rangle )\, d\sigma(z),\   \ x, y\in \mathbb{S}^{d-1}. 
\end{equation}
\end{enumerate}
This equivalence had been stated in \cite{BD} and the construction  of the function $f$ from the kernel $F$ is almost identical to that of the function $k$ from the kernel $K$ in Proposition \ref{prop:Convolution}: one chooses the Gegenbauer coefficients of $f$ so that $( \widehat{f} (n,\lambda) )^2 = \widehat{F} (n,\lambda)$, which is mimicked in \eqref{eq:k}. In addition, \eqref{eq:3-1} was used to prove a general Stolarsky Principle on the sphere \cite{BDM, BM}: for each $\nu \in \mathbb P (\mathbb S^{d-1})$, 
\begin{equation}
I_F (\nu) - I_F (\sigma) = D^{\, 2}_{L^2, f,\sigma } (\nu),
\end{equation}
which we have generalized to arbitrary compact domains in Theorem  \ref{thm:genStol}.\\

We finish by mentioning that our results similarly apply to the setting of two-point homogeneous compact spaces, both connected (e.g., projective spaces)  and discrete (e.g., Hamming cube) with the corresponding uniform measure, as well as compact topological groups with the Haar measure, although we do not pursue these directions in this paper.


\begin{thebibliography}{10}

\bibitem[A]{A} N. Aronszajn. \textit{Theory of Reproducing Kernels}. Trans. of the AMS, \textbf{68 (3)} (1950), 337--404.

\bibitem[Ba]{Ba} A. Barg. \textit{Stolarsky's Invariance Principle for Finite Metric Spaces}. Mathematika, \textbf{67(1)} (2020), 1-23.

\bibitem[BS]{BS} A. Barg and M. Skriganov. \textit{Bounds for Discrepancies in the Hamming Space}. Journal of Complexity (2021), available at https://doi.org/10.1016/j.jco.2021.101552.

\bibitem[BC]{BC} J. Beck and W. Chen. \textit{Irregularities of Distribution}. Cambridge Tracts in Mathematics, \textbf{89}, Cambridge University Press (1987).

\bibitem[BD]{BD} D. Bilyk and F. Dai. \textit{Geodesic Distance Riesz Energy on the Sphere}. Transactions of the American Mathematical Society, \textbf{372} (2019), 3141-3166.

\bibitem[BDM]{BDM} D. Bilyk, F. Dai, and R. Matzke. \textit{Stolarsky Principle and Energy Optimization on the Sphere}. Constructive Approximation, \textbf{48(1)} (2018), 31-60.

\bibitem[BFGMPV]{BFGMPV} D. Bilyk, D. Ferizovi\'{c}, A. Glazyrin, R. Matzke, J. Park, and O. Vlasiuk. \textit{Potential Theory with Multivariate Kernels}. Available at https://arxiv.org/pdf/2104.03410.

\bibitem[BGMPV]{BGMPV} D. Bilyk, A. Glazyrin, R. Matzke, J. Park, and O. Vlasiuk. \textit{Energy on Spheres and Discreteness of Minimizing Measures}. Journal of Functional Analysis, \textbf{280 (11)},  (2021), 108995. 

\bibitem[BM]{BM} D. Bilyk and R. Matzke. \textit{On the Fejes T\'{o}th Problem on the Sum of Acute Angles}. Proceedings of American Mathematical Society, \textbf{147} (2019), 51-59.

\bibitem[Bj]{Bj} G. Bj\"{o}rck. \textit{Distributions of Positive Mass}. Arkiv F\"{o}r Matematik, \textbf{3} (1956), 255-269.

\bibitem[BHS]{BHS} S. Borodachov, D. Hardin, and E. Saff. \textit{Discrete Energy of Rectifiable Sets}. Springer Monographs in Mathematics, Springer-Verlag New York (2019).

\bibitem[BrD]{BrD} J.S. Brauchart and J. Dick. \textit{A Simple Proof of Stolarsky's Invariance Principle}. Proceedings of the American Mathematical Society, \textbf{141} (2013), 2085-2096.

\bibitem[CFP]{CFP} J. Carrillo, R.J. McCann, and C. Vallani. \textit{Geometry of Minimizers for the Interaction Energy with Mildly Repulsive Potentials}. Annales de l'Institut Henri Poincar\'{e} C, Analyse Non Lin\'{e}aire, \textbf{34} (2017), 1299-1308.

\bibitem[CSh]{CSh} J. Carillo, R. Shu. \textit{From radial symmetry to fractal behavior of aggregation equilibria for repulsive-attractive potentials.}  Available at https://arxiv.org/pdf/2107.05079.pdf.


\bibitem[CSFSV]{CSFSV} 
D. Castro-Silva, F. M. de O. Filho, L. Slot, and F. Vallentin (2021). {\it A recursive Lov\'asz theta number for simplex-avoiding sets}. Available at https://arxiv.org/pdf/2106.09360.

\bibitem[CMY]{CMY} J. Cleary, S.  A. Morris, and D. Yost. \textit{Numerical Geometry-Numbers for Shapes}. The American Mathematical Monthly \textbf{93(4)} (1986), 260-275.

\bibitem[CS]{CS} J. H. Conway and N. J. A. Sloane. \textit{Sphere packings, lattices, and groups}. Grundlehren der mathematischen Wissenschaften, Springer-Verlag New York (1999).

\bibitem[DX]{DX} F. Dai and Y. Xu. \textit{Approximation Theory and Harmonic Analysis on Spheres and Balls}. Springer Monographs in Mathematics, Springer, New York, NY (2013).

\bibitem[DPZ]{DPZ} H. Dette, A. Pepelyshev, and A. Zhigljavsky. \textit{Optimal Design for Linear Models with Correlated Observations}. The Annals of Statistics, \textbf{41(1)} (2013), 143-176. 

\bibitem[F]{F} G.E. Fasshauer. \textit{Positive Definite Kernels: Past, Present and Future}. Springer Monographs in Mathematics, Springer-Verlag New York (2019).

\bibitem[FS]{FS} F. Finster and D. Schiefeneder. \textit{On the Support of Minimizers of Causal Variational Principles}. Archive for Rational Mechanics and Analysis, \textbf{210(2)} (2013), 321-364.

\bibitem[Fr]{Fr} O. Frostman. \textit{Potentiel d'équilibre et capacité des ensembles}. Lund (1935).

\bibitem[G]{G} R. Gangolli. \textit{Positive definite kernels on homogeneous spaces and certain stochastic processes related to L\'{e}vy's Brownian motion of several parameters}. Annales de l'Institut Henri Poincar\'{e}, Section B \textbf{3(2)} (1967), 121-226.

\bibitem[HBZO]{HBZO} H. He, K. Basu, Q. Zhao, and A. Owen. \textit{Permutation $p$-value Approximation via Generalized Stolarsky Invariance}. Annals of Statistics, \textbf{47(1)} (2019), 583-611.

\bibitem[HNW]{HNW} A. Hinrichs, P. Nickolas, and R. Wolf. \textit{A Note on the Metric Geometry of the Unit Ball}. Mathematische Zeitschrift, \textbf{268} (2011), 887-896.

\bibitem[KN]{KN} L. Kuipers and H. Niederreiter. \textit{Uniform Distribution of Sequences}. Dover Publications (2006).

\bibitem[L]{L} N. S. Landkoff.  \textit{Potential Theory}. Springer-Verlag (1972).


\bibitem[Ma]{Ma} J. Matou\v{s}ek. \textit{Geometric Discrepancy: An Illustrated Guide}. Algorithms and Combinatorics, \textbf{18}, Springer-Verlag, Berlin Heidelberg (1999).

\bibitem[Mec]{Mec} E. Meckes. \textit{The Random Matrix Theory of the Classical Compact Groups}. Cambridge University Press (2019).

\bibitem[Me]{Me} J. Mercer. \textit{Functions of Positive and Negative Type and their Connection with the Theory of Integral Equations}. Philosphical Transactions of the Royal Society A, \textbf{209(441-458)}, 415-446 (1909).

\bibitem[P]{P} M. Petrache. {\it Decorrelation as an avatar of convexity.}
Available at http://arxiv.org/abs/1507.00782.

\bibitem[PZ]{PZ} L. Pronzato and A. Zhigljavsky. \textit{Bayesian Quadrature and Energy Minimization for Space-filling Design}. SIAM/ASA Journal on Uncertainty Quantification, \textbf{8(3)} (2020), 959-1011.

\bibitem[Sch]{Sc} I. Schoenberg. \textit{Positive Definite Functions on Spheres}. Duke Mathematical Journal, {\bf 9(1)} (1942), 96–108.


\bibitem[Sk1]{Sk1} M. Skriganov. \textit{Point Distributions in Compact Metric Spaces}. Mathematika {\bf 63(3)} (2017), 1152–1171.

\bibitem[Sk2]{Sk2} M. Skriganov. \textit{Stolarsky's Invariance Principle for Projective Spaces}. Journal of Complexity {\bf 56} (2020), 101428.

\bibitem[St]{St} K.B. Stolarsky. \textit{Sums of Distances between Points on a Sphere. II}. Proceedings of American Mathematical Society, {\bf 41} (1973), 575–582.

\bibitem[ZDP]{ZDP} A. Zhigljavsky, H. Dette, and A. Pepelyshev. \textit{A New Approach to Optimal Design for Linear Models with Correlated Observations}. Journal of the American Statistical Association, \textbf{105(491)} (2010), 1093-1103.

\end{thebibliography}
\end{document}